\documentclass[11pt]{amsart}
\usepackage{amssymb,amsmath,amsthm}
\usepackage{amsfonts}
\usepackage{enumerate}
\usepackage{dsfont}
\usepackage{cite}
\usepackage[colorlinks, citecolor=red]{hyperref}
\usepackage{mathrsfs}
\usepackage{epsfig}
\usepackage{lscape}
\usepackage{subfigure}
\usepackage{epstopdf}
\usepackage{caption}
\usepackage{algorithm}
\usepackage{algpseudocode}
\usepackage{multirow}
\usepackage{geometry}
\usepackage{paralist}
\usepackage{enumerate}
\usepackage{url}
\usepackage{graphicx,cite}
\usepackage{longtable}
\usepackage{tikz}
\usepackage{adjustbox}
\usepackage{CJKutf8}
\usepackage{hyperref}
\usepackage{url}

\textheight 8.0in
\textwidth 6.00in
\topmargin -0.25in
\oddsidemargin 0.25in
\evensidemargin 0.25in
\parskip 1.0ex

\newtheorem{definition}{Definition}[section]

\newtheorem{prop}[definition]{Proposition}
\newtheorem{thm}[definition]{Theorem}
\newtheorem{lemma}[definition]{Lemma}

\newtheorem{assumption}{Assumption}[section]
\newtheorem{REMK}[definition]{Remark}
\date{}

\begin{document}
	\begin{CJK}{UTF8}{gbsn}
		\baselineskip 18pt
		\bibliographystyle{plain}

\title[ASDCD for linearly constrained convex optimization]{Stochastic dual coordinate descent with adaptive heavy ball momentum for linearly constrained convex optimization}

\author{Yun Zeng}
\address{School of Mathematical Sciences, Beihang University, Beijing, 100191, China. }
\email{zengyun@buaa.edu.cn}

\author{Deren Han}
\address{LMIB of the Ministry of Education, School of Mathematical Sciences, Beihang University, Beijing, 100191, China. }
\email{handr@buaa.edu.cn}

\author{Yansheng Su}
\address{School of Mathematical Sciences, Beihang University, Beijing, 100191, China. }
\email{suyansheng@buaa.edu.cn}

\author{Jiaxin Xie}
\address{LMIB of the Ministry of Education, School of Mathematical Sciences, Beihang University, Beijing, 100191, China. }
\email{xiejx@buaa.edu.cn}

\begin{abstract}
	The problem of finding a solution to the linear system $Ax = b$ with certain minimization properties arises in numerous scientific and engineering areas.
	In the era of big data, the stochastic optimization algorithms  become increasingly significant due to their scalability for problems of unprecedented size.
	This paper focuses on the problem of minimizing a strongly convex function subject to linear constraints.
	We consider the dual formulation of this problem and adopt the stochastic coordinate descent to solve it.
	The proposed algorithmic framework, called adaptive stochastic dual coordinate descent, utilizes sampling matrices sampled from user-defined distributions to extract gradient information. Moreover, it employs Polyak's heavy ball momentum acceleration with adaptive parameters learned through iterations, overcoming the limitation of the heavy ball momentum method that it requires prior knowledge of certain parameters, such as the singular values of a matrix. With these extensions, the framework is able to recover many well-known methods in the context, including the randomized sparse Kaczmarz method, the randomized regularized Kaczmarz method,  the linearized Bregman iteration, and a variant of the conjugate gradient (CG) method. Additionally, we introduce an equivalent formulation that, in certain cases, substantially reduces the need for full-dimensional vector operations introduced by the momentum term. We prove that, with strongly admissible objective function, the proposed method converges linearly in expectation. Numerical experiments are provided to confirm our results.
\end{abstract}

\maketitle

\let\thefootnote\relax\footnotetext{Key words: convex optimization, linear constraint, stochastic dual coordinate descent, heavy-ball momentum, adaptive strategy, Kaczmarz method}

\let\thefootnote\relax\footnotetext{Mathematics subject classification (2020): 90C25, 65F10, 65K05, 15A06, 68W20}

\section{Introduction}
\label{sec1}

Consider the following linearly constrained convex optimization problem
\begin{equation}
	\label{main-prob}
	\min\limits_{x\in\mathbb{R}^n} \ f(x) \ \ \text{subject to} \ \  Ax=b,
\end{equation}
where $A\in\mathbb{R}^{m\times n},b\in\mathbb{R}^m$, and $f$ is  strongly convex but possibly nonsmooth.
The problem depicts a solution to the linear system $ Ax=b $ that possesses certain properties.
It arises in many areas of scientific computing, such as compressed sensing \cite{candes2006robust,donoho2006compressed,cai2009linearized},  low-rank matrix recovery \cite{recht2010guaranteed,cai2010singular}, image processing \cite{chambolle2016introduction}, and machine learning \cite{lin2020accelerated}.

In this paper, we  consider applying the coordinate descent method to the dual problem of \eqref{main-prob}.
We here provide a brief derivation of the method and the related convex analysis basics will be presented in Subsection \ref{subsection2.2}.
The associated Lagrangian function of \eqref{main-prob} is
$$
L(x,\lambda)=f(x)-\langle \lambda,Ax-b\rangle, \ x\in\mathbb{R}^n \ \text{and} \ \lambda\in\mathbb{R}^m,
$$
which induces the dual function
$$
\inf\limits_{x\in\mathbb{R}^n}\left\{ f(x)-\langle \lambda,Ax-b\rangle\right\}=-f^*(A^\top \lambda)+\langle\lambda,b\rangle,
$$
where $A^\top$ denotes the transport of $A$ and $f^*$ denotes the Legendre-Fenchel conjugate of $f$. Thus the corresponding dual problem of \eqref{main-prob} is
\begin{equation}\label{dual-prob}
	\min\limits_{\lambda\in\mathbb{R}^m}
	g(\lambda):=f^*(A^\top \lambda)-\langle\lambda,b\rangle.
\end{equation}
Since $f$ is strongly convex, $f^*$ is continuous differentiable and so is the function $g$. One may apply the coordinate descent to solve \eqref{dual-prob},
\begin{equation}\label{iter-org-cd}
	\lambda^{k+1}=\lambda^k-\alpha_k e_{i_k}e_{i_k}^\top\nabla g(\lambda^k),
\end{equation}
where $\alpha_k>0$ is the stepsize, the index $i_k$ belongs to $ [m]:=\{1,\ldots,m\}$, $e_{i_k}$ denotes the ${i_k}$-th unit coordinate vector in $\mathbb{R}^m$,  and $\nabla g$ denotes the gradient of $g$.
Since $\nabla g(\lambda^k)=A\nabla f^*(A^\top\lambda^k)-b$, one has $e_{i_k}^\top\nabla g(\lambda^k)=a_{{i_k}}^\top \nabla f^*(A^\top\lambda^k)-b_{i_k}$, where $a_{i_k}$ denotes the ${i_k}$-th row of $A$ and $b_{i_k}$ denotes the ${i_k}$-th entry of $b$. The method \eqref{iter-org-cd} can be rewritten as
$$
\lambda^{k+1}=\lambda^k-\alpha_k \left(a_{i_k}^\top \nabla f^*(A^\top\lambda^k)-b_{i_k}\right) e_{i_k}.
$$
Denoting $x^k:=\nabla f^*(A^\top\lambda^k)$ and $z^k:=A^\top \lambda^k$, one obtains the following equivalent iteration strategy of  \eqref{iter-org-cd},
\begin{equation}\label{iter-cd}
	\begin{aligned}
		&z^{k+1}=z^k-\alpha_k(a_{i_k}^\top x^k-b_{i_k})a_{i_k},\\
		&x^{k+1}=\nabla f^*(z^{k+1}).
	\end{aligned}
\end{equation}
Particularly,
if the index $i_k$ is chosen randomly, it can recover several well-known methods.
When $f(x)=\frac{1}{2}\|x\|^2_2$, this iteration scheme \eqref{iter-cd} becomes the randomized Kaczmarz (RK) method \cite{Str09} for solving linear systems. When $f(x)=\mu\|x\|_1+\frac{1}{2}\|x\|^2_2$ with parameter $\mu>0 $, it becomes the randomized sparse Kaczmarz (RSK) method \cite{schopfer2019linear} for solving sparse signal recovery problems. 

	\subsection{Our contribution}

In this paper, we present a generic algorithmic framework, named the stochastic dual coordinate descent (SDCD) method, for solving the linearly constrained optimization problem \eqref{main-prob} via solving its unconstrained dual reformulation \eqref{dual-prob} by stochastic algorithms. Noting that $ e_{i_k} $ in \eqref{iter-org-cd} acts as the role that extracts partial information of the gradient, we extend $ e_{i_k} $ to a general \textit{sampling matrix} $ S_k \in \mathbb{R}^{m\times q_k} $ and apply the following iteration format,
\begin{equation}
	\label{iter-org-sdcd}\lambda^{k+1}=\lambda^k-\alpha_k S_k S_k^\top \nabla g(\lambda^k).
\end{equation}
The matrix $S_k $ is sampled from some probability spaces $ (\Omega_k,\mathcal{F}_k,P_k) $ which may vary across iterations. Although it is actually an extended version of the primal stochastic dual coordinate descent method, we refer to it as SDCD for the sake of convenience.

We further incorporate the Polyak's heavy ball momentum technique \cite{polyak1964some} into SDCD, resulting in the following adaptive SDCD (ASDCD) algorithmic framework
$$
	\lambda^{k+1}=\lambda^k-\alpha_k S_k S_k^\top \nabla g(\lambda^k)+\beta_k(\lambda^k-\lambda^{k-1}),
$$
where both $ \alpha_{k} $ and $ \beta_{k} $ are determined adaptively.
Similarly, we can derive an equivalent iteration format,
\begin{equation}\label{fsdcd-o}
	\begin{aligned}
		&z^{k+1}=z^k-\alpha_kA^\top S_kS_k^\top(A x^k-b)+\beta_k(z^k-z^{k-1}),\\
		&x^{k+1}=\nabla f^*(z^{k+1}).
	\end{aligned}
\end{equation}
Note that when $\beta_k=0$ and $S_k=e_{i_k}$, \eqref{fsdcd-o} reduces to \eqref{iter-cd}. 
We now comment on the main contributions of this work.

	\begin{itemize}
	\item[1.] We develop a framework of the stochastic dual coordinate descent (SDCD) method for solving the linearly constrained convex optimization problem. At each iteration, a sampling matrix $ S_k $ is drawn to extract partial information of the matrix $ A $.
	In addition, instead of relying on a fixed probability space $(\Omega, \mathcal{F}, P)$, we utilize a class of probability spaces $\{(\Omega_k, \mathcal{F}_k, P_k)\}_{k\geq0}$ to generate the random  matrix $S_k$ at each iteration. This framework is flexible and can recover a wide range of popular algorithms, including the linearized Bregman iteration, the randomized sparse Kaczmarz method, and their variants. Furthermore, it also enables us to design more versatile hybrid algorithms with improved performance, accelerated convergence, and better scalability.
	\item[2.] The Polyak's heavy ball momentum (HBM) method  has attracted much attention in recent years  due to its ability to improve the convergence of the gradient descent (GD) method. Recently, a fruitful line of research has been dedicated to extending this acceleration technique to enhance the performance of the  stochastic gradient descent (SGD) method \cite{loizou2020momentum,barre2020complexity,sebbouh2021almost,han2022pseudoinverse}. However, the resulting stochastic heavy ball momentum  (SHBM) method has a drawback that it requires prior knowledge of certain problem parameters, such as the singular values of the coefficient matrix \cite{loizou2020momentum,han2022pseudoinverse,polyak1964some,ghadimi2015global,bollapragada2022fast}. Hence, it is an open problem whether one can design an adaptive scheme for obtaining the parameters $\alpha_k$ and $\beta_k$ to get rid of any of these problem parameters \cite{barre2020complexity,bollapragada2022fast}. This paper answers the problem for a class of unconstrained convex optimization problems that are reformulated from linearly constrained optimization problems. We adopt the HBM technique to accelerate the convergence of  the SDCD method and obtain the adaptive SDCD (ASDCD) method. Particularly, based on the majorization technique \cite{li2016majorized,chen2017efficient}, we propose a novel strategy for the ASDCD method to  learn the parameters adaptively and prove that the method converges linearly in expectation.
	        \item[3.] We develop an equivalent formulation of the ASDCD method that, in certain cases, largely avoids the full-dimensional vector operations introduced by the momentum term, inspired by the concept of variable transformation in \cite{lee2013efficient,fercoq2015accelerated}. In particular, when $f(x)=\frac{1}{2}\|x\|_{2}^{2}$, since the deterministic version of ASDCD coincides with a variant of the conjugate gradient (CG) method, this reformulation offers an efficient implementation strategy for CG-type methods in solving linear systems with sparse cofficient matrices.
			%
\end{itemize}

	\subsection{Related work}

There exist various approaches for solving problems of the form \eqref{main-prob}, such as
the (accelerated) proximal gradient method \cite{beck2009fast,lan2020first,luo2022differential}, the primal-dual method \cite{chambolle2011first,condat2013primal}, the augmented Lagrangian method \cite{bertsekas2014constrained,he2022fast,luo2022primal,luo2021accelerated}, and the alternating direction method of multipliers (ADMM) \cite{boyd2011distributed,han2022survey}.
However, since these approaches require whole matrix-vector products, they are typically unavailable when the matrix $A$ is extremely huge that it is impossible to be stored entirely in the RAM.
To deal with such issues, there emerge iterative methods that only requires partial information of $A$ at each step, for instance, the  Kaczmarz method \cite{Kac37} and the coordinate descent method \cite{Cha08,bai2021matrix}, their randomized variants \cite{Str09,Lev10}, and the corresponding modifications and extensions \cite{Bai18Gre,Gow19,liu2016accelerated,han2022pseudoinverse,loizou2020momentum,Nec19,needell2014paved,moorman2021randomized,Gow15,zeng2023randomized}. Moreover, in recent years, primal-dual coordinate descent (PDCD) \cite{alacaoglu2020random, chambolle2018stochastic, fercoq2019coordinate, zhang2017stochastic}, a randomized coordinate variant of the primal-dual method, has also been proposed to solve large-scale problems.
At each iteration, PDCD processes a randomly selected subset of coordinates and updates the corresponding variables, thereby reducing memory requirements and per-iteration computational costs.

\subsubsection{Kaczmarz method}
The Kaczmarz method \cite{Kac37}, also known as the algebraic reconstruction technique (ART) \cite{herman1993algebraic,gordon1970algebraic}, is an iterative method for solving large-scale linear systems $Ax=b$.
Starting from  $x^0\in\mathbb{R}^n$, the Kaczmarz method constructs $x^{k+1}$ by
$$
x^{k+1}=x^k-\frac{\langle a_{i_k},x^k\rangle-b_{i_k}}{\|a_{i_k}\|^2_2}a_{i_k},
$$
where $i_k$ is selected from $[m] $ according to some selection rules, including cyclic rules \cite{Kac37,censor1981row}, greedy rules \cite{Gri12}, or random rules \cite{Str09}.
Notably, Strohmer and Vershynin \cite{Str09} showed that if the index $ i_k$ is selected randomly with probability proportional to $\|a_{i_k}\|^2_2$, then the resulting \emph{randomized  Kaczmarz} (RK) method converges linearly in expectation. The iteration scheme apparently shows that it only requires a single row of the matrix $A\in\mathbb{R}^{m\times n}$ at each iteration, endowing the method with low RAM occupation and fast data transfer. These features make the Kaczmarz method a practically efficient iterative solver to linear systems, especially for the mentioned case where $ A $ is too large to be stored entirely in the RAM. Therefore, a large amount of researches on the refinements and extensions of the Kaczmarz method have been studied. We refer to \cite{bai2023randomized} for a recent survey on them. 


Recently, Tondji and Lorenz \cite{tondji2022faster} proposed a new variant of the RK method, named the randomized sparse Kaczmarz method with averaging (RSKA), for approximating sparse solutions to linear systems. Let $\mathcal{J}_k$ consist of $\eta$ indexes sampled from $[m]$ and let $\omega_i\geq0$ represent the weight corresponding to the $i$-th row. The RSKA update is given by
\begin{equation}\label{rska}
	\begin{aligned}
		&z^{k+1}=z^k-\frac{1}{\eta} \sum\limits_{i\in \mathcal{J} _k}\omega_i\frac{a_{i}^\top x^k-b_i}{\|a_{i}\|^2_2}a_{i},\\
		&x^{k+1}=S_\mu(z^{k+1}),
	\end{aligned}
\end{equation}
where $S_\mu(\cdot)$ is the soft shrinkage operator defined as \eqref{soft-op}.
If $\mathcal{J}_k$ is a singleton  and the weights are chosen as $\omega_i=1$ for $i\in[m]$, it reduces to  the standard randomized sparse Kaczamrz (RSK) method \cite{schopfer2019linear}. We note that our SDCD framework can recover an adjusted RSKA method, where instead of using a constant stepsize as in \eqref{rska}, an adaptive stepsize is employed; See  Remark \ref{remark-xie-0607-1}.
In practice, the methods with well-designed adaptive stepsizes typically perform better than those with constant ones \cite{lorenz2014linearized,necoara2022stochastic}.

\subsubsection{Stochastic mirror descent }
The stochastic mirror descent (SMD) method as well as its variants \cite{beck2003mirror,lan2012validation,nemirovski2009robust} is one of the most widely used algorithms in stochastic optimization for non-smooth Lipschitz continuous convex  functions. Enlightened by the pioneering work \cite{nemirovskij1983problem}, SMD has been studied in the context of convex programming \cite{nemirovski2009robust}, saddle-point problems \cite{mertikopoulos2018optimistic}, and monotone variational inequalities \cite{mertikopoulos2018stochastic}. 

%
The SMD method for solving the finite-sum problem
\begin{equation} \label{FSP}
	\begin{aligned}
		\min\frac{1}{m}\sum_{i=1}^m h_i(x)
	\end{aligned}
\end{equation}
utilizes the update
\begin{equation}\label{SMD-iter}
	x^{k+1}=\arg\min_{x\in\mathbb{R}^n}\left\{t_k\left\langle \nabla h_{i_k}(x^k),x-x^k\right\rangle+D_{\psi, z^k}(x^k, x)\right\},
\end{equation}
where $t_k$ is the stepsize, $i_k$ is selected randomly, $ \psi $ is the mirror map that is $ \mu_\psi$-strongly convex,  $ z^k\in\partial \psi(x^k)$, and $D_{\psi, z}$ is the Bregman distance associated to $\psi$ that is defined later (Definition \ref{bregma-dis}).
When $\psi(x)=\frac{1}{2}\|x\|^2_2$, it reduces to the \emph{stochastic gradient descent} (SGD) \cite{hardt2016train,robbins1951stochastic,ma2017stochastic} method.
Recently, Ryan et al. \cite{d2021stochastic} studied  the SMD method for solving \eqref{FSP} with \textit{mirror stochastic Polyak stepsize}
\begin{equation}\label{polyak-stepsize}
	t_k=\frac{\mu_{\psi}(h_{i_k}(x^k)-\widehat{h}_{i_k})}{c\left\|\nabla h_{i_k}(x^k)\right\|^2_2},
\end{equation}
where $c>0$ is a fixed constant and $\widehat{h}_i= \inf_{x \in \mathbb{R}^{n}} h_{i}(x)$. It provides a more reliable approach to determine $ t_k $ than typical hyperparameter tuning.
The method is proved to be convergent for lower bounded convex functions $ h_i $, if the \emph{interpolation} condition holds, i.e. there exists $\widehat{x}\in\mathbb{R}^n$ such that $h_i(\widehat{x})=\widehat{h}_i$ for all $i=1,\ldots,m$.
Although this assumption seems restrictive, it can be satisfied under certain circumstances, e.g. the stochastic optimization problem reformulated from the linear constraint \eqref{reformulate-ls}.
We establish the connection between our SDCD framework and the SMD method, and show that the adaptive stepsize in our SDCD method framework is in actual a kind of the mirror stochastic Polyak stepsize; See Remark \ref{remark-xie-0429}.




\subsubsection{Heavy ball momentum method}

The  heavy ball momentum (HBM) method is a modification of the classic gradient descent (GD) method, which was introduced in $1964$ by Polyak  \cite{polyak1964some}. For minimizing $ g(\lambda), $
it introduces the momentum term $ \beta(\lambda^k-\lambda^{k-1}) $ to the original GD iteration format, writing as
\[
\lambda^{k+1} = \lambda^k - \alpha \nabla g(\lambda^k) + \beta(\lambda^k-\lambda^{k-1}).
\]
The local convergence of the HBM method was originally established for twice differentiable, strongly convex, and smooth functions $ g $, showing that it converges at an accelerated rate with appropriate parameters $\alpha$ and $\beta$ \cite{polyak1964some}.
While only recently, a global sublinear convergence of the HBM method for smooth and convex functions was given in \cite{ghadimi2015global}.
Inspired by its success, several recent works extend the HBM technique to speed up the stochastic version of the GD method (SGD), called the stochastic HBM (SHBM) method \cite{loizou2020momentum,barre2020complexity,sebbouh2021almost,han2022pseudoinverse,P2017Stochastic,
	loizou2021revisiting,morshed2020stochastic}.

However, it is well-known that one limitation of the HBM method is that $ \alpha $  and  $\beta$ may rely on certain problem parameters that are generally inaccessible. For instance, the optimal  choices of the parameters for the SHBM method for solving the linear system $Ax=b$ require knowledge of the largest and smallest nonzero singular values of the matrix $A$ \cite{loizou2020momentum,polyak1964some,ghadimi2015global,bollapragada2022fast}. Therefore, a strategy that learns the parameters $ \alpha $ and $ \beta $ adaptively would be especially beneficial to the practical performance of the SHBM method  \cite{barre2020complexity,bollapragada2022fast}.
Recently, Zeng et al. have provided a solution in the context of solving linear systems \cite{zeng2024adaptive}. They showed that the proposed adaptive SHBM (ASHBM) method
converges with an improved rate. 
While our work also integrates HBM into the stochastic dual coordinate descent (SDCD) framework with adaptive parameter updates, it differs from ASHBM in three key aspects. First, our method determines the parameters via a majorization technique and employs an incremental scheme to ensure practical computability, whereas ASHBM derives them through orthogonal projections. Second, in the special case where \( f(x) = \frac{1}{2}\|x\|_2^2 \), our approach admits a more efficient implementation that significantly reduces the full-dimensional operations required by the momentum term. Finally, we establish a linear convergence rate under weaker, more general assumptions than those required by ASHBM. A recent paper \cite{lorenz2023minimal}, published online  around the same time as our working paper \cite{yun2023fast}, presented an algorithm closely related to the adaptive  strategy  presented here. Their convergence results are slightly different from ours. Beyond the investigations in \cite{lorenz2023minimal}, we consider the relationship between our framework, and the SMD method and the conjugate gradient method. In addition, we provide a geometric interpretation of our approach.

\subsection{Organization}
The remainder of the paper is organized as follows.
After introducing some preliminaries in Section 2, we present and analyze the SDCD method with adaptive stepsizes in Section 3. In Section 4, we propose the adaptive SDCD (ASDCD) method and show its linear convergence rate.
In Section 5, we perform some numerical experiments to show
the effectiveness of the proposed method. We conclude the paper in Section 6. Proofs of all main results are provided in the appendix.

\section{Preliminaries}
\label{section-2}
\subsection{Notations}
Throughout the paper, for any random variables $\zeta$, we use $\mathbb{E}[\zeta]$ to denote the expectation of $\zeta$. For an integer $m\geq 1$, let $[m]:=\{1,\ldots,m\}$.
For any vector $x\in\mathbb{R}^n$, we use $x_i,x^\top$,  $\|x\|_1$, and $\|x\|_2$ to denote the $i$-th entry, the transpose,  the $\ell_1$-norm, and the  $\ell_2$-norm of $x$, respectively.
For any matrix $A\in\mathbb{R}^{m\times n}$, we use $a_{i}^\top,A^\top,\|A\|_2,\|A\|_F$,  and $\mbox{Range}(A)$ to denote the $i$-th row, the transpose, the spectral norm, the Frobenius norm, and the column space, respectively.
For a given index set $\mathcal{I}$, we use $A_{\mathcal{I}}$ to denote the row submatrix of the matrix $A$ indexed by $\mathcal{I}$. The cardinality
of the set $\mathcal{I}$ is denoted by $| \mathcal{I}|$.
We use $\sigma_{\min}(A)$ to denote the smallest nonzero singular value of $A$, and use $\lambda_{\max}(A^\top A)$ and $\lambda_{\min}(A^\top A)$ to denote  the largest and smallest eigenvalues of $A^\top A$, respectively. In addition, for any positive difinite matirx $H \in \mathbb{R}^{n \times n}$, we define the $H$-inner product and the induced $H$-norm by
		$
		\langle x,y \rangle_{H}= \langle x, Hx \rangle$ and $ \|x\|_{H}=\sqrt{\langle x, x \rangle_{H}}
		$, respectively.
The soft thresholding operator (also known as shrinkage) $S_\mu(\cdot)$ is defined componentwise as
\begin{equation}\label{soft-op}
	\left(S_\mu(x)\right)_i=\max\{|x_i|-\mu,0\}\cdot \text{sign}(x_i),
\end{equation}
where $x\in\mathbb{R}^n$ and $\text{sign}(\cdot)$ is the signum function which returns the sign of a nonzero number and zero otherwise. 
	
	\subsection{Convex optimization basics} \label{subsection2.2}
	
	This subsection aims to recall some concepts and properties about convex functions and Bregman distance. We refer readers to \cite{rockafellar1997convex,beck2017first} for more detailed analysis.
	
	\begin{definition}[subdifferential]  For a convex function $f: \mathbb{R}^{n} \rightarrow \mathbb{R}$, its subdifferential at $x \in \mathbb{R}^{n}$ is defined as
		$$
		\partial f(x):=\left\{z\in\mathbb{R}^n \mid f(y) \geq f(x)+\langle z, y-x\rangle, \ \forall \ y \in \mathbb{R}^{n}\right\}.
		$$
	\end{definition}
	
	\begin{definition}[$\gamma$-strong convexity] \label{strongly-convex-def}
		A function $f: \mathbb{R}^{n} \rightarrow \mathbb{R}$ is called $\gamma$-strongly convex for a given $\gamma>0$ if the following inequality holds for any $x, y \in \mathbb{R}^{n}$ and $z \in \partial f(x)$,
		$$
		f(y) \geq f(x)+\langle z,y-x\rangle+\frac{\gamma}{2}\|y-x\|_{2}^2.
		$$
	\end{definition}
	
	As an example, the function $f(x)=\frac{1}{2}\|x\|_{2}^2$ is differentiable and $1$-strongly convex. Moreover, it is easy to show that the function $h(x)+\frac{1}{2}\|x\|_{2}^2$ is $1$-strongly convex if $h(x)$ is convex.
	
	\begin{definition}[$L$-smoothness]
		Let $g: \mathbb{R}^{n} \rightarrow \mathbb{R}$ be a differentiable function. Then $g$ is $L$-smooth if there exists a constant \( L > 0 \) such that for all \( x, y \in \mathbb{R}^{n} \),
		$$
		\| \nabla g(x)- \nabla g(y) \|_{2} \leq L\|x-y\|_{2}.
		$$
	\end{definition}
	If \( g \) is \( L \)-smooth, then for all \( x, y \in \mathbb{R}^{n} \), the following inequality holds \cite[Lemma 5.7]{beck2017first}:
	$
	g(y) \leq g(x) + \langle \nabla g(x), y - x \rangle + \frac{L}{2} \| y - x \|_{2}^{2}.
	$
	
	\begin{definition}[conjugate function] The conjugate function of $f: \mathbb{R}^{n} \rightarrow \mathbb{R}$ at $y \in\mathbb{R}^{n}$ is defined as
		$$
		f^*(y):=\sup _{x\in \mathbb{R}^{n}}\{\langle y, x\rangle-f(x)\}.
		$$
	\end{definition}
	If $f$ is convex, it can be shown that \cite{rockafellar1997convex,beck2017first}
	$$
	z \in \partial f(x) \Leftrightarrow x \in \partial f^*(z).
	$$
	Besides, if $f$ is $\gamma$-strongly convex, then its conjugate function $f^*$ is differentiable and $\frac{1}{\gamma}$-smooth, i.e. for any $x, y \in \mathbb{R}^{n}$,
	\begin{equation}\label{strongly-convex}
		f^*(y) \leq f^*(x)+\left\langle\nabla f^*(x), y-x\right\rangle+\frac{1}{2 \gamma}\|y-x\|_{2}^2.
	\end{equation}
	
	\begin{definition}[Bregman distance]
		\label{bregma-dis}
		For a strictly convex function $f: \mathbb{R}^{n} \rightarrow \mathbb{R}$, the Bregman distance between $x$ and $y$ with respect to $f$ and $z \in \partial f(x)$ is defined as
		$$
		D_{f, z}(x, y):=f(y)-f(x)-\langle z,y-x\rangle.
		$$
	\end{definition}

	Since if $z \in \partial f(x)$, it holds that $\langle z, x\rangle=f(x)+f^*(z)$,  one has
	\begin{equation}\label{breg-dist}
		D_{f,z}(x,y)=f(y)+f^*(z)-\langle z, y\rangle.
	\end{equation}
	If $f$ is $\gamma$-strongly convex, it holds that
	$$
	D_{f, z}(x,y) \geq \frac{\gamma}{2}\|x-y\|_{2}^2 .
	$$
	
	\begin{definition}[restricted strong convexity, \cite{lai2013augmented,schopfer2016linear}] Let $f: \mathbb{R}^{n} \rightarrow \mathbb{R}$ be convex differentiable with a nonempty minimizer set $X_f$. The function $f$ is called restricted  $ \mu $-strongly convex on $\mathbb{R}^n$, if there exists $ \mu>0 $ such that for all $x \in \mathbb{R}^{n}$ the following inequality holds,
		$$
		\left\langle\nabla f\left(\operatorname{Proj}_{X_f}(x)\right)-\nabla f(x), \operatorname{Proj}_{X_f}(x)-x\right\rangle \geq \mu\left\|\operatorname{Proj}_{X_f}(x)-x\right\|_{2}^2,
		$$
		where $\operatorname{Proj}_{X_f}(x)$ denotes the orthogonal projection of $x$ onto $X_f$.
	\end{definition}
	
	\begin{definition}[strong admissibility] Let $f: \mathbb{R}^{n} \rightarrow \mathbb{R}$ be strongly convex. The function $f$ is called strongly admissible if the function $g(y):=f^*\left(A^\top y\right)-\left\langle b, y\right\rangle$ is restricted strongly convex on $\mathbb{R}^{n}$ for all $A \in \mathbb{R}^{m\times n}$ and $b \in \mathbb{R}^{m}$.
	\end{definition}

	As an example, the function $f(x)=\mu\|x\|_1+\frac{1}{2}\|x\|^2_2$ is strongly admissible (see \cite[Example 3.7]{chen2021regularized} and \cite[Lemma 4.6]{lai2013augmented}). We refer readers to \cite{schopfer2016linear} for more examples of strongly admissible
	functions.  The following property of strongly admissible functions is key for proving linear convergence rate of the algorithms.
	
	\begin{lemma}[\cite{chen2021regularized}, Lemma 3.6]\label{adm-key}
		Let $\widehat{x}$ be the solution of \eqref{main-prob}. If $f$ is strongly admissible, then there exists a constant $\nu>0$ such that
		\begin{equation}\label{nu-cons}
			D_{f, z}(x, \widehat{x}) \leq \frac{1}{\nu}\|A(x-\widehat{x})\|_{2}^2,
		\end{equation}
		for all $x \in \mathbb{R}^{n}$ and $z \in \partial f(x) \cap\text{Range}\left(A^\top\right)$.
	\end{lemma}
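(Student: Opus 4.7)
My strategy is to show that the primal-side Bregman distance $D_{f,z}(x,\widehat{x})$ coincides with the suboptimality gap of the dual function $g$ at the point $\lambda$ defined by $z=A^\top\lambda$, and then to derive a Polyak-Lojasiewicz-type error bound for $g$ from the restricted strong convexity hypothesis. (Note: the condition $z\in\text{Range}(A)$ in the statement must be read as $z\in\text{Range}(A^\top)$, since $z\in\R^n$.) To set up, the KKT conditions for \eqref{main-prob} furnish a Lagrange multiplier $\widehat{\lambda}\in\R^m$ with $\widehat{z}:=A^\top\widehat{\lambda}\in\partial f(\widehat{x})$, equivalently $\widehat{x}=\nabla f^*(\widehat{z})$. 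Substituting, $\nabla g(\widehat{\lambda})=A\nabla f^*(A^\top\widehat{\lambda})-b=A\widehat{x}-b=0$, so $\widehat{\lambda}$ lies in the minimizer set $X_g$ of $g$.

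Given $x\in\R^n$ and $z\in\partial f(x)\cap\text{Range}(A^\top)$, write $z=A^\top\lambda$. Using formula \eqref{breg-dist}, the Fenchel identity $\langle\widehat{z},\widehat{x}\rangle=f(\widehat{x})+f^*(\widehat{z})$, and the feasibility $A\widehat{x}=b$, the cross term collapses via $\langle z,\widehat{x}\rangle=\langle\lambda,b\rangle$, and a short calculation yields
\begin{equation*}
D_{f,z}(x,\widehat{x})=f(\widehat{x})+f^*(A^\top\lambda)-\langle\lambda,b\rangle=g(\lambda)-g(\widehat{\lambda}),
\end{equation*}
where the last equality uses $g(\widehat{\lambda})=-f(\widehat{x})$, obtained by applying the Fenchel identity at $\widehat{x}$. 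In parallel, $x=\nabla f^*(z)$ gives $\nabla g(\lambda)=A\nabla f^*(A^\top\lambda)-b=A(x-\widehat{x})$. The target inequality therefore reduces to the dual error bound
\begin{equation*}
g(\lambda)-g(\widehat{\lambda})\leq\tfrac{1}{\nu}\|\nabla g(\lambda)\|_2^2.
\end{equation*}

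To establish this error bound, I would exploit the restricted strong convexity of $g$ with constant $\mu>0$. Setting $\lambda^*:=\Pi_{X_g}(\lambda)$, one has $g(\lambda^*)=g(\widehat{\lambda})$ and $\nabla g(\lambda^*)=0$. Convexity of $g$ and Cauchy-Schwarz give $g(\lambda)-g(\lambda^*)\leq\langle\nabla g(\lambda),\lambda-\lambda^*\rangle\leq\|\nabla g(\lambda)\|_2\|\lambda-\lambda^*\|_2$, while the definition of restricted strong convexity, again with Cauchy-Schwarz, yields $\mu\|\lambda-\lambda^*\|_2\leq\|\nabla g(\lambda)\|_2$. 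Multiplying the two bounds delivers the claim with $\nu=\mu$. The main obstacle is the primal-dual identification step: the equality $D_{f,z}(x,\widehat{x})=g(\lambda)-g(\widehat{\lambda})$ relies crucially on $z\in\text{Range}(A^\top)$, because this is precisely what lets $\langle z,\widehat{x}\rangle$ depend on $\widehat{x}$ only through the feasibility relation $A\widehat{x}=b$; without this condition one would recover at best an inequality, and the clean reduction to a PL-type bound on the dual side would fail.
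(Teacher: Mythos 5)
Your argument is correct. Note first that the paper itself gives no proof of this lemma --- it is imported verbatim from the cited reference (Lemma 3.6 of \cite{chen2021regularized}) --- so there is no in-paper proof to compare against; but your route is the standard one for results of this type and is complete. The two pillars both check out: (i) the identity $D_{f,z}(x,\widehat{x})=f(\widehat{x})+f^*(z)-\langle z,\widehat{x}\rangle=g(\lambda)-g(\widehat{\lambda})$ for $z=A^\top\lambda$, using $\langle A^\top\lambda,\widehat{x}\rangle=\langle\lambda,b\rangle$ and the Fenchel identity at $\widehat{x}$, together with $\nabla g(\lambda)=A(x-\widehat{x})$; and (ii) the Polyak--{\L}ojasiewicz-type bound $g(\lambda)-\min g\leq\frac{1}{\mu}\|\nabla g(\lambda)\|_2^2$, obtained from restricted strong convexity plus Cauchy--Schwarz and the gradient inequality, which indeed yields $\nu=\mu$. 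You are also right that $\text{Range}(A)$ in the statement is a typo for $\text{Range}(A^\top)$. The only step I would ask you to justify rather than assert is the existence of the multiplier $\widehat{\lambda}$ with $A^\top\widehat{\lambda}\in\partial f(\widehat{x})$: since $f$ is finite-valued (hence continuous) and the constraint set is a nonempty affine subspace with normal cone $\text{Range}(A^\top)$, the sum rule $\partial(f+\iota_{\{Ax=b\}})(\widehat{x})=\partial f(\widehat{x})+\text{Range}(A^\top)$ applies and the optimality condition $0\in\partial f(\widehat{x})+\text{Range}(A^\top)$ delivers $\widehat{\lambda}$; this also guarantees that $X_g$ is nonempty, which the definition of restricted strong convexity presupposes.
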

	
	We note that the constant $\nu$ in Lemma \ref{adm-key} depends on the matrix $A$ and the function $f$. For example, if we let the objective function $f(x)=\frac{1}{2}\|x\|^2_2$, then $\nu=2\sigma_{\min}^2(A)$ \cite{chen2021regularized}. For the case where $f(x)=\mu\|x\|_1+\frac{1}{2}\|x\|^2_2$, we refer readers to \cite[Lemma 7]{lai2013augmented} for an explicit computation of $\nu$. In general, it is hard to quantify $\nu$.
	
	
	
	\section{Stochastic dual coordinate descent}
	
	In this section, we examine the stochastic dual coordinate descent (SDCD) method for solving the linearly constrained optimization problem \eqref{main-prob}. As discussed in Section \ref{sec1}, at each iteration, we first draw a sampling matrix $ S_k $ from the probability space $(\Omega_k, \mathcal{F}_k, P_k)$. Then the iterate is updated with the following iteration strategy
	$$
	\begin{aligned}
		&z^{k+1}=z^k-\alpha_kA^\top S_kS_k^\top(A x^k-b),\\
		&x^{k+1}=\nabla f^*(z^{k+1}).
	\end{aligned}
	$$
	Here $\alpha_k$ is the stepsize defined by
	\begin{equation}
		\label{alp}
		\alpha_k=
		\left\{\begin{array}{ll}
			(2-\zeta)L_{\text{adap}}^{k,\gamma}, \;\; \text{if} \; S_k^\top (Ax^k-b)\neq 0;
			\\
			0, \qquad\qquad\;\;\;\;\; \text{otherwise},
		\end{array}
		\right.
	\end{equation}
	where $\zeta \in (0,2)$ is the relaxation parameter and
	\begin{equation}
		\label{Lk}
		L_{\text{adap}}^{k,\gamma}=\frac{\gamma\left\|S_{k}^\top(Ax^k-b)\right\|^2_2}{\left\|  A^\top S_kS_k^\top(Ax^k-b) \right\|_2^2}.
	\end{equation}
	The following lemma shows that this stepsize is well-defined.
	\begin{lemma}
		\label{lemma-non}
		Assume that the linear system $Ax=b$ is consistent. Then for any matrix $S \in \mathbb{R}^{m\times q}$ and any vector $\tilde{x} \in \mathbb{R}^{n}$, it holds that $A^\top SS^\top(A\tilde{x}-b) \neq 0$ if and only if $S^\top(A\tilde{x}-b) \neq 0$.
	\end{lemma}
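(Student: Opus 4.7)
The plan is to handle the two implications of the biconditional separately. One direction is immediate, while the other uses the consistency assumption to place the residual inside $\text{Range}(A)$ and then exploits the positive semidefiniteness of $SS^\top$.

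For the easy direction, I would argue by contrapositive: if $S^\top(A\tilde{x}-b) = 0$, then multiplying on the left by $A^\top S$ gives $A^\top SS^\top(A\tilde{x}-b) = 0$ at once, with no use of consistency.

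For the reverse direction, set $w := A\tilde{x}-b \in \mathbb{R}^m$. The consistency hypothesis tells us $b \in \text{Range}(A)$, hence $w \in \text{Range}(A)$, so we may write $w = Au$ for some $u \in \mathbb{R}^n$. Assume $S^\top w \neq 0$. The key computation is
\begin{equation*}
\langle u,\, A^\top SS^\top w\rangle = \langle Au,\, SS^\top w\rangle = \langle w,\, SS^\top w\rangle = \|S^\top w\|_2^2 > 0,
\end{equation*}
which forces $A^\top SS^\top w \neq 0$, i.e.\ $A^\top SS^\top(A\tilde{x}-b) \neq 0$.

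I do not expect any real obstacle here: the only subtle ingredient is noticing that $w \in \text{Range}(A)$ lets us convert the bilinear form $w^\top(SS^\top w)$, which is strictly positive by positive semidefiniteness of $SS^\top$ together with $S^\top w \neq 0$, into a pairing with $A^\top SS^\top w$ via the adjoint identity. Without the consistency assumption, $w$ might have a nontrivial component in $\text{Null}(A^\top) = \text{Range}(A)^\perp$, and $SS^\top w$ could land entirely in that orthogonal complement, so the hypothesis is used precisely to rule this out.
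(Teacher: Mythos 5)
Your proof is correct and uses essentially the same argument as the paper: both reduce the claim to the identity $(\tilde{x}-x^*)^\top A^\top SS^\top(A\tilde{x}-b)=\|S^\top(A\tilde{x}-b)\|_2^2$, which is available precisely because consistency lets you write the residual as $A(\tilde{x}-x^*)$. Your split into two implications and your closing remark on where consistency is genuinely needed are accurate but do not change the substance of the argument.
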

	\begin{proof}
		Suppose that $A\widehat{x}=b$, then we know that $S^\top(A\tilde{x}-b) =0$ if and only if
		$$
		(A\tilde{x}-b)^\top S S^\top(A\tilde{x}-b)=(\tilde{x}-\widehat{x})^\top A^\top S S^\top A(\tilde{x}-\widehat{x})=0,
		$$
		which is equivalent to $A^\top S S^\top A(\tilde{x}-\widehat{x})=A^\top SS^\top(A\tilde{x}-b)=0$. This completes the proof of this lemma.
	\end{proof}
	
	Therefore, $S_k^\top (Ax^k-b)\neq 0$ implies that $A^\top S_k S_k^\top (Ax^k-b)\neq0$.
	We emphasize that when $S_k^\top (Ax^k-b)= 0$, then $A^\top S_kS_k^\top(Ax^k-b)=0$, and it holds that $x^{k+1}=x^k$ for any choices of $\alpha_k$. So we set $\alpha_k = 0$ to avoid extraneous computation.
	The stochastic dual coordinate descent (SDCD) method is formally described in Algorithm \ref{RDCD}.
	We make the following assumption on the probability spaces $\left\{(\Omega_k, \mathcal{F}_k, P_k)\right\}_{k\geq0}$.
	\begin{assumption}
		\label{Ass}
		Let $\left\{(\Omega_k, \mathcal{F}_k, P_k)\right\}_{k\geq0}$  be  probability spaces from which the sampling matrices are drawn. We assume that  for any $k\geq0$, $\mathop{\mathbb{E}}_{S_k\in\Omega_k} \left[S_k S_k^\top\right]$ is a positive definite matrix.
	\end{assumption}		
	
	\begin{algorithm}[htpb]
		\caption{ Stochastic dual coordinate descent (SDCD)\label{RDCD}}
		\begin{algorithmic}
			\Require
			$A\in \mathbb{R}^{m\times n}$, $b\in \mathbb{R}^m$, probability spaces $\{(\Omega_k, \mathcal{F}_k, P_k)\}_{k\geq0}$, $\zeta\in(0,2)$, $k=0$ and initial points $z^0\in\text{Range}(A^\top)$, $x^0=\nabla f^*(z^{0})$.
			\begin{enumerate}
				\item[1:] Randomly select a sampling matrix $S_{k}\in\Omega_k$.
				\item[2:] Compute the stepsize $\alpha_k$ in \eqref{alp}.
				\item[3:] Compute
				$$
				z^{k+1}=z^k-\alpha_k A^\top S_{k} S_{k}^\top(Ax^k-b).
				$$
				\item[4:] Compute
				$$
				x^{k+1}=\nabla f^*(z^{k+1}).
				$$
				\item[5:] If the stopping rule is satisfied, stop and go to output. Otherwise, set $k=k+1$ and go to Step $1$.
			\end{enumerate}
			
			\Ensure
			The approximate solution $x^k$.
		\end{algorithmic}
	\end{algorithm}

		We now consider the connections between the SDCD framework and other methods.
	\begin{REMK}\label{remark-xie-0429}
		When the probability spaces are fixed, i.e. $(\Omega_k, \mathcal{F}_k, P_k) \equiv  (\Omega, \mathcal{F}, P) $,  Algorithm \ref{RDCD} can be regarded as a kind of the stochastic mirror descent (SMD) method using  mirror stochastic Polyak stepsize. Consider the following optimization problem
		\begin{equation}\label{reformulate-ls}
			\mathop{\min}\limits_{x \in \mathbb{R}^n} \mathop{\mathbb{E}}\limits_{S\in\Omega}\left[ h_S(x)\right],
		\end{equation}
		where $h_S(x):=\frac{1}{2} \left\|S^\top(Ax-b)\right\|^2_2$.
		In fact,  the problem \eqref{reformulate-ls} can be viewed as a stochastic reformulation of solving the linear system $Ax=b$, and Assumption \ref{Ass} guarantees that the stochastic reformulation \eqref{reformulate-ls} is \textit{exact}, i.e. the set of minimizers of the problem \eqref{reformulate-ls} is identical to the set of solutions of the linear system $Ax=b$; See \cite[Lemma 2.2]{zeng2024adaptive}.
		
		We employ the SMD method \eqref{SMD-iter} to solve \eqref{reformulate-ls}
		$$
		x^{k+1}=\arg\min_{x\in\mathbb{R}^n}\left\{t_k\left\langle \nabla h_{S_k}(x^k),x-x^k\right\rangle+D_{f, z^k}(x^k, x)\right\}, \ z^k\in\partial f(x^k),
		$$
		which yields the following update
		$$
		\begin{aligned}
			z^{k+1}&=z^k-t_k\nabla h_{S_k}(x^k)=z^k-t_k A^\top S_{k} S_{k}^\top(Ax^k-b),\\
			x^{k+1}&=\nabla f^*(z^{k+1}).
		\end{aligned}
		$$
		This is exactly the SDCD method. Let $\widehat{x}$ be the solution of \eqref{main-prob}, then  $b=A\widehat{x}$ and   $$\hat{h}_{S}:=\inf_{x \in \mathbb{R}^{n}} h_{S}(x)=\inf_{x \in \mathbb{R}^{n}} \frac{1}{2} \left\|S^\top(Ax-b)\right\|^2_2=\left\|S^\top(A\widehat{x}-b)\right\|^2_2=0$$ for all $S \in \Omega$. Hence, the mirror stochastic Polyak stepsize \eqref{polyak-stepsize} is simply
		$$
		t_k=\frac{\gamma h_{S_k}(x^k)}{c\|\nabla h_{S_k}(x^k)\|^2_2}=\frac{\gamma\left\|S_{k}^\top(Ax^k-b)\right\|^2_2}{c\left\|  A^\top S_kS_k^\top(Ax^k-b) \right\|_2^2}.
		$$
		Now we have arrived at the connection between Algorithm \ref{RDCD} and the SMD with  mirror stochastic Polyak stepsize.
	\end{REMK}
	
	\begin{REMK}\label{remark-xie-0607-1}
		Consider the following iteration
		\begin{equation}\label{BKM-g}
			\begin{aligned}
				z^{k+1}&=z^k-\alpha_k\bigg(\sum\limits_{i\in \mathcal{J} _k}\omega^k_i\frac{a_{i}^\top x^k-b_i}{\|a_{i}\|^2_2}a_{i}\bigg),\\
				x^{k+1}&=\nabla f^*(z^{k+1}),
			\end{aligned}
		\end{equation}
		where the weights $\omega^k_i\in[0,1]$ such that $\sum\limits_{i\in \mathcal{J} _k}\omega^k_i=1$,  $\mathcal{J} _k\subseteq[m]$, and the stepsize $\alpha_k>0$. We note that the iteration scheme \eqref{BKM-g} can be viewed as a special case of the SDCD method.
		Indeed, let $I_{\mathcal{J}_k}$ denote a column concatenation of the columns of the $m\times m$ identity matrix $I$ indexed by $\mathcal{J} _k$, and the diagonal matrix  $D_{\mathcal{J} _k}:=\mbox{diag}(\sqrt{\omega^k_i}/\|a_{i}\|_2,i\in \mathcal{J} _k)$.
		Then the iteration scheme \eqref{BKM-g} can be rewritten as
		$$
		\begin{aligned}
			z^{k+1}&=z^k-\alpha_k A^\top S_k S_k^\top (Ax^k-b),\\
			x^{k+1}&=\nabla f^*(z^{k+1}),
		\end{aligned}$$
		where $S_k = I_{\mathcal{J} _k}D_{\mathcal{J} _k}$, which can be viewed as a sampling matrix selected from  a certain probability space $(\Omega_k, \mathcal{F}_k, P_k)$. Finally, let us discuss some special cases of the iteration scheme \eqref{BKM-g}.
		\begin{itemize}
			\item[(1)] If $f(x)=\frac{1}{2}\|x\|^2_2$, then $f^*(z)=\frac{1}{2}\|z\|^2_2$ and hence the iteration scheme \eqref{BKM-g}
			reduces to the randomized average block Kaczmarz (RABK) method proposed by Necoara \cite{Nec19}.
			\item[(2)] If $f(x)=\mu\|x\|_1+\frac{1}{2}\|x\|^2_2$, then the iteration scheme \eqref{BKM-g} derives a new type of the RSKA method, where instead of using a constant stepsize as \eqref{rska}, an adaptive stepsize is employed.
			Furthermore, if the sample spaces $\Omega_k=\{I\}$ for any $k\geq0$, then \eqref{BKM-g} reduces to the linearized Bregman method \cite{cai2009convergence,cai2009linearized}.
			\item[(3)] If $\mathcal{J}_k$ is a singleton, then the iteration scheme \eqref{BKM-g} reduces to  the randomized regularized Kaczmarz  method proposed in \cite{chen2021regularized}.
		\end{itemize}
	\end{REMK}
	
	\begin{REMK}
			
			We consider the stochastic dual coordinate ascent (SDCA) method by Shalev-Schwartz et al. \cite{shalev2013stochastic} for the regularized loss minimization problem 
			$$
				\min_{x \in \mathbb{R}^{n}} f(x) = \frac{1}{m} \sum_{i=1}^{m} \phi_{i}(a_i^\top x) + \frac{\gamma}{2} \|x\|_{2}^{2},
			$$
			where \(\phi_i\) are convex functions, \(a_i^\top\) are rows of \(A\), and \(\gamma > 0\). Its dual problem is
			\begin{equation} \label{dual-reg-20}
				\min_{\lambda \in \mathbb{R}^{m}} g(\lambda) = \frac{1}{m} \sum_{i=1}^{m} \phi_{i}^{*}(-\lambda_{i}) + \frac{\gamma}{2} \left\| \frac{1}{\gamma m} A^\top \lambda \right\|_{2}^{2},
			\end{equation}
			with optimal solutions satisfying \(\widehat{x} = \frac{1}{\gamma m} A^\top \widehat{\lambda}\). The SDCA iteration \cite{shalev2013stochastic} is
			\begin{equation} \label{SDCA}
				\begin{aligned}
					\triangle \lambda^*_{i_{k}} &= \arg\min_{\triangle \lambda} \ \frac{1}{m} \phi_{i_k}^{*}\left( -(\lambda^{k})_{i_{k}} + \triangle \lambda \right) + \frac{\gamma}{2} \left\| x^k - \frac{\triangle \lambda}{\gamma m} a_{i_k} \right\|_{2}^{2}, \\
					\lambda^{k+1} &= \lambda^{k} - \triangle \lambda^*_{i_{k}} e_{i_{k}}, \\
					x^{k+1} &= x^{k} - \frac{1}{\gamma m} \triangle \lambda^*_{i_{k}} a_{i_{k}}.
				\end{aligned}
			\end{equation}
			Applying the SDCD method \eqref{iter-org-sdcd} with $S_k=e_{i_k}$ to the dual problem \eqref{dual-reg-20} yields
			\begin{equation} \label{SDCD-lg}
				\begin{aligned}
					\triangle \widetilde{\lambda}_{i_{k}} &= -\frac{1}{m} \nabla \phi^*_{i_k}( -(\lambda^k)_{i_k} ) + \frac{1}{m} a^\top_{i_k} x^k, \\
					\lambda^{k+1} &= \lambda^{k} - \alpha_k \triangle \widetilde{\lambda}_{i_{k}} e_{i_k}, \\
					x^{k+1} &= x^k - \frac{\alpha_k}{\gamma m} \triangle \widetilde{\lambda}_{i_{k}} a_{i_k},
				\end{aligned}
			\end{equation}
			where \(x^k = \frac{1}{\gamma m} A^\top \lambda^k\). Comparing the update for \(\triangle \widetilde{\lambda}_{i_{k}}\) in \eqref{SDCD-lg} with the exact minimization in \eqref{SDCA}, we observe that the former can be viewed as a single gradient descent step for solving the subproblem in the latter. However, we note that SDCD offers greater flexibility through its choice of the sampling matrix \(S_k\), leading to more versatile algorithmic variants.
	\end{REMK}

	Finally, we note that the flexibility of our framework and the general convergence theorem (Theorem \ref{thm-rdcd}) allow for customization of the probability spaces $\{(\Omega_k, \mathcal{F}_k, P_k)\}_{k\geq0}$ to address other specific problems. For instance, random sparse matrices or sparse Rademacher matrices may be appropriate for a particular set of problems.
	
	\subsection{Convergence analysis}
	\label{sect-31}
	To establish the convergence of Algorithm \ref{RDCD}, 
	the following lemma is necessary.
	\begin{lemma}[\cite{zeng2024adaptive}, Lemma 2.5]\label{positive}
		Let $S\in\mathbb{R}^{m\times q}$ be a real-valued random variable defined on a probability space $(\Omega,\mathcal{F},P)$. Suppose that
		$
		D=\mathbb{E}\left[SS^\top\right]
		$
		is a positive definite matrix.  Then
		$$
		\mathbb{E}\left[\frac{SS^\top}{\|S\|^2_2}\right]
		$$
		is also positive definite, here we define $\frac{0}{0}=0$.
	\end{lemma}
	To state conveniently, we define
	\begin{equation}
		\label{matrix-H}
		H_k=
		\left\{\begin{array}{ll}
			\mathbb{E}_{S \in \Omega_k}[SS^\top], \quad\;\;\, \text{if} \; \Omega_k \; \text{is} \; \text{bounded};
			\\
			\mathbb{E}_{S \in \Omega_k}\left[\frac{SS^\top}{\|S\|_2^2}\right], \;\;\, \ \text{otherwise},
		\end{array}
		\right.
	\end{equation}
	and
	\begin{equation}
		\label{lambda-max}
		\lambda_{\max}^{(k)}=
		\left\{\begin{array}{ll}
			\sup\limits_{S\in \Omega_k} \lambda_{\max}\left(A^\top SS^\top A\right), \;\;\,\ \text{if} \; \Omega_k \; \text{is} \; \text{bounded};
			\\
			\sup\limits_{S\in \Omega_k} \lambda_{\max}\left(\frac{A^\top SS^\top A}{\|S\|_2^2}\right), \quad\;\ \text{otherwise}.
		\end{array}
		\right.
	\end{equation}
	It follows from Assumption \ref{Ass} and Lemma \ref{positive} that $H_k$ in \eqref{matrix-H} is well-defined and positive definite.
	
	At the $k$-th iteration, we consider the product probability space $(\mathop{\Pi}_{i=0}^{k} \Omega_{i}, \mathop{\otimes}_{i=0}^{k} \mathcal{F}_{i}, \tilde{P})$, where $\otimes$ denotes the product of $\sigma$-algebras and $\tilde{P}$ denotes the corresponding product measure \cite[Section 5]{athreya2006measure}. Let $\mathcal{B}_{k}:=(S_{0}, S_{1}, \cdots, S_{k-1})$ be a random variable in this probability space, where $\mathcal{B}_{0}$ denotes the empty sequence. We denote the conditional expectation with respect to $\mathcal{B}_{k}$ as
	$$
	\mathbb{E}_{k} [\cdot]:=\mathbb{E}[\cdot | \mathcal{B}_{k}].
	$$
	We have the following convergence result for Algorithm \ref{RDCD}. The detailed proof is provided in the Appendix \ref{secA1}.
	
	\begin{thm}
		\label{thm-rdcd}
		Let $f$ be $\gamma$-strongly convex and strongly admissible. Suppose that the probability spaces $\{(\Omega_k, \mathcal{F}_k, P_k)\}_{k\geq 0}$ satisfy Assumption \ref{Ass}. Let $\{x^k\}_{k\geq0}$ and $\{z^k\}_{k\geq0}$ be the sequences of iterates  generated by Algorithm \ref{RDCD}. Then
		$$
		\mathbb{E}{_k}\left[D_{f,z^{k+1}}(x^{k+1},\widehat{x})\right]\leq \left(1-\frac{\gamma\zeta(2-\zeta)\nu\lambda_{\min}\left(H_k\right) }{2\lambda_{\max}^{(k)}} \right)D_{f,z^{k}}(x^{k},\widehat{x}),
		$$
		where  $\widehat{x}$ is the solution of \eqref{main-prob}, $\nu$, $H_k$, and $\lambda_{\max}^{(k)}$ are given by \eqref{nu-cons}, \eqref{matrix-H}, and \eqref{lambda-max}, respectively. Furthermore, it holds that
				$$
						\mathop{\mathbb{E}} \left[\| x^{k}-\hat{x}\|_2^2\right] \leq \frac{2 D_{f,z^{0}}(x^{0},\widehat{x})}{\gamma} \prod \limits_{i=0}^{k-1} \left(1-\frac{\gamma\zeta(2-\zeta)\nu\lambda_{\min}\left(H_{i}\right) }{2\lambda_{\max}^{(i)}} \right).
				$$
	\end{thm}

	\begin{REMK}
	If we choose $$\text{Prob}\left(S_k=\frac{e_i}{\|a_i\|_2}\right)=\frac{\|a_i\|^2_2}{\|A\|_F^2},$$
	then Theorem \ref{thm-rdcd} recovers the convergence result for the regularized randomized Kaczmarz proposed in \cite[Theorem 3.9]{chen2021regularized}. Particularly, if $f(x)=\frac{1}{2}\|x\|^2_2$, then Theorem \ref{thm-rdcd} recovers the convergence result for the randomized Kaczmarz method.
	\end{REMK}
	
		\begin{REMK}\label{remark-sampling}
			We analyze the effect of the sample size on the convergence rate of SDCD. In particular, we consider a partition-based sampling strategy, which has been extensively studied in the literature \cite{tropp2009column, Nec19, necoara2022stochastic, xie2021subset}. Let \(\varpi\) be a uniform random permutation on \([m]\). The index set \([m]\) is partitioned into blocks \(\mathcal{I}_1, \dots, \mathcal{I}_t\) as follows
			\begin{equation}\label{partition-sample}
			\begin{aligned}
				\mathcal{I}_i &= \left\{ \varpi(k): k = (i-1)\tau+1, (i-1)\tau+2, \ldots, i\tau \right\}, \quad i = 1, 2, \ldots, t-1, \\
				\mathcal{I}_t &= \left\{ \varpi(k): k = (t-1)\tau+1, (t-1)\tau+2, \ldots, m \right\}, \qquad |\mathcal{I}_t| \leq \tau,
			\end{aligned}
			\end{equation}
			where \(\tau\) is the block size.
			At each iteration, we randomly select a block index \(i_k \in [t]\) with probability 
			$
			\operatorname{Prob}(i_k = i) = \|A_{\mathcal{I}_i}\|_F^2/\|A\|_F^2,
			$
			and set the sampling matrix as 
			$
			S_k = (I_{\mathcal{I}_{i_k}})^\top/\|A_{\mathcal{I}_{i_k}}\|_F.
			$
			Under this strategy, the parameters in Theorem \ref{thm-rdcd} simplify to 
			$
			H_k = \frac{1}{\|A\|_F^2} I$ and $ \lambda_{\max}^{(k)} = \max_{i \in [t]} \frac{\|A_{\mathcal{I}_i}\|_2^2}{\|A_{\mathcal{I}_i}\|_F^2}.
			$
			Then,  SDCD with \(\zeta = 1\) satisfies the following convergence bound
			$$
				\mathbb{E} [\| x^k - \hat{x} \|_2^2] \leq \frac{2 D_{f,z^0}(x^0,\hat{x})}{\gamma} 
				\left(1 - \frac{\gamma \nu}{2 \|A\|_F^2 \cdot \max_{j \in [t]} \frac{\|A_{\mathcal{I}_j}\|_2^2}{\|A_{\mathcal{I}_j}\|_F^2}} \right)^k.
			$$
			We now compare two extreme cases: \(\tau = 1\) and \(\tau = m\). The corresponding convergence factors are \(1 - \frac{\gamma \nu}{2\|A\|_F^2}\) and \(1 - \frac{\gamma \nu}{2\|A\|_2^2}\), respectively. Using the inequality \(1 - \iota \leq e^{-\iota}\) for any \(\iota \in (0,1)\), SDCD with \(\tau = 1\) and \(\tau = m\) requires
			\[
			\mathcal{O}\left(\frac{\|A\|_F^2}{\gamma \nu} \log\left(\frac{1}{\varepsilon}\right)\right) \quad \text{and} \quad \mathcal{O}\left(\frac{\|A\|_2^2}{\gamma \nu} \log\left(\frac{1}{\varepsilon}\right)\right)
			\]
			iterations, respectively, to achieve an accuracy of \(\varepsilon\) in terms of the expected error norm. Since updating \(z^k\) with \(\tau = m\) requires approximately \(m\) times more computation than with \(\tau = 1\), a fair comparison should be made between \(\mathcal{O}\left(\frac{\|A\|_F^2}{\gamma \nu} \log(1/\varepsilon)\right)\) and \(\mathcal{O}\left(\frac{m\|A\|_2^2}{\gamma \nu} \log(1/\varepsilon)\right)\). Given that \(\|A\|_F^2 \leq m \|A\|_2^2\), SDCD with \(\tau = 1\) converges faster in theory than with \(\tau = m\). 
		Now consider a special case where the rows within each block \(A_{\mathcal{I}_i}\) are orthonormal, i.e.,
		\[
		\langle a_\ell, a_j \rangle = 
		\begin{cases}
			1, & \text{if } \ell = j \in \mathcal{I}_i, \\
			0, & \text{if } \ell \neq j \in \mathcal{I}_i.
		\end{cases}
		\]
		In this case, the convergence factor becomes \(1 - \frac{\gamma \nu \tau}{2m}\), and the corresponding number of iterations to achieve \(\varepsilon\)-accuracy is \(\mathcal{O}\left(\frac{m}{\gamma \nu \tau} \log(1/\varepsilon)\right)\). Since each iteration with block size \(\tau\) requires roughly \(\tau\) times more computation than with \(\tau=1\), a fair comparison of the total computational cost yields \(\mathcal{O}\left(\frac{m}{\gamma \nu} \log(1/\varepsilon)\right)\), which is independent of the block size $\tau$. This indicates that SDCD with \(\tau = 1\) performs comparably to larger block sizes in this orthonormal setting.

			The above analysis also applies to uniform sampling, where \(\tau\) distinct indices are selected uniformly at random from \([m]\) to form \(\mathcal{I}\), with \(|\mathcal{I}| = \tau\) in each sampling.
			However, in practice, parallelization techniques can be used to accelerate SDCD in terms of total runtime. This observation is also supported by the numerical results in Section~\ref{section5-1}.
		\end{REMK}

	\section{Acceleration by adaptive heavy-ball momentum}
	
	This section aims to enrich the SDCD method with adaptive heavy-ball momentum.
	It was originally proposed by Polyak \cite{polyak1964some}, where a (heavy ball) momentum term is introduced to improve the convergence rate of the gradient descent method. To solve the problem \eqref{dual-prob}, the iteration scheme of the proposed adaptive SDCD (ASDCD) method reads as
	$$
	\lambda^{k+1}=\lambda^{k}-\alpha_{k} S_kS_k^\top\nabla g\big(\lambda^{k}\big)+\beta_k\big(\lambda^{k}-\lambda^{k-1}\big),
	$$
	where $S_k$ is randomly chosen from $\Omega_k$, $\alpha_k$ is the stepsize, and $\beta_k$ is the momentum parameter. Ideally, we would like to  choose  $\alpha_k$ and $\beta_k$ to obtain a sufficient reduction of the objective function $g(\lambda)$, and hence we may consider the following optimization problem
	\begin{equation}\label{opt-prob}
				\begin{aligned}
					\min\limits_{\lambda}& \ \ g(\lambda)=f^*(A^\top\lambda)- \langle b,\lambda\rangle\\
					\text{subject to}& \ \ \lambda=\lambda^{k}-\alpha S_kS_k^\top\nabla g\big(\lambda^{k}\big)+\beta\big(\lambda^{k}-\lambda^{k-1}\big), \ \alpha,\beta\in\mathbb{R}.
				\end{aligned}
	\end{equation}
	However, finding the optimal vaules of $\alpha$ and $\beta$  may be difficult in practice. Actually, we can use the majorization technique \cite{li2016majorized,chen2017efficient} to find an approximate solution of the optimization problem \eqref{opt-prob}.
	To state conveniently, we set $z:=A^\top\lambda$, $x:=\nabla f^*(A^\top\lambda)=\nabla f^*(z)$, and
	$$
	d^k:=A^\top S_{k} S_{k}^\top\nabla g\big(\lambda^{k}\big)=A^\top S_{k} S_{k}^\top(Ax^k-b).
	$$
	Let $\widehat{x}$ be the solution of \eqref{main-prob}, then  $b=A\widehat{x}$. For the objective function in \eqref{opt-prob}, we have
	\begin{equation}\label{MT-xie}
		\begin{aligned}
			g(\lambda)=&f^*(z)-\langle A\widehat{x},\lambda\rangle\\
			=&f^*(z)-\langle z,\widehat{x}\rangle\\
			\leq& f^*(z^k)+\langle \nabla f^*(z^k), z-z^k\rangle+\frac{1}{2\gamma}\|z-z^k\|^2_2-\langle z,\widehat{x}\rangle
			\\
			=& f^*(z^k)-\left\langle x^k,\alpha A^\top S_{k} S_{k}^\top(Ax^k-b)-\beta(z^k-z^{k-1}) \right\rangle
			\\
			&+\frac{1}{2\gamma}\left\|\alpha A^\top S_{k} S_{k}^\top(Ax^k-b)-\beta(z^k-z^{k-1})\right\|^2_2\\
			&-
			\left\langle z^k-\alpha A^\top S_{k} S_{k}^\top(Ax^k-b)+\beta(z^k-z^{k-1}),\widehat{x}\right\rangle\\
			=&f^*(z^k)-\langle z^k,\widehat{x}\rangle+\frac{1}{2\gamma}\left\|\alpha d^k-\beta(z^k-z^{k-1})\right\|^2_2
			\\&- \left\langle x^k-\widehat{x},\alpha d^k -\beta(z^k-z^{k-1}) \right\rangle,
		\end{aligned}
	\end{equation}
	where the first inequality follows from \eqref{strongly-convex}. Let
	\begin{equation} \nonumber
		\begin{aligned}
			h^k(\alpha,\beta):=& \frac{1}{2\gamma}\left\|\alpha d^k
			-\beta(z^k-z^{k-1})\right\|^2_2
			- \left\langle x^k-\widehat{x},\alpha d^k -\beta(z^k-z^{k-1}) \right\rangle.
		\end{aligned}
	\end{equation}
	We now consider solving the following majorized optimization problem of \eqref{opt-prob}
	\begin{equation}\label{xie-mo}
		\min_{\alpha,\beta\in\mathbb{R}} h^k(\alpha,\beta).
	\end{equation}
	By taking the derivative of \eqref{xie-mo} with respect to $\alpha$ and $\beta$, we obtain
			\begin{equation}\nonumber
				\left\{
				\begin{array}{ll}
					\alpha \|d^{k}\|^{2}_{2}-\beta \langle d^{k}, z^{k}-z^{k-1} \rangle=\gamma \langle d^{k}, x^{k}-\widehat{x} \rangle, \\[0.3cm]
					\alpha \langle d^{k}, z^{k}-z^{k-1} \rangle-\beta \|z^{k}-z^{k-1}\|_{2}^{2}=\gamma \langle z^k-z^{k-1},x^k-\widehat{x}\rangle.
				\end{array}
				\right.
			\end{equation}
			Therefore, the minimizers of \eqref{xie-mo} are given by
	\begin{equation}\label{adap-s-m}
		\left\{
		\begin{array}{ll}
			\alpha_k=\gamma\frac{\langle d^{k},x^k-\widehat{x}\rangle\|z^k-z^{k-1}\|^2_2-\langle d^k,z^k-z^{k-1}\rangle\langle z^k-z^{k-1},x^k-\widehat{x}\rangle}
			{\|d^k\|^2_2\|z^k-z^{k-1}\|^2_2-\langle d^k,z^k-z^{k-1}\rangle^2}, \\[0.3cm]
			\beta_k=\gamma\frac{-\|d^{k}\|^2_2\langle x^k-\widehat{x},z^k-z^{k-1}\rangle+\langle d^k,z^k-z^{k-1}\rangle \langle d^k,x^k-\widehat{x}\rangle}
			{\|d^k\|^2_2\|z^k-z^{k-1}\|^2_2-\langle d^k,z^k-z^{k-1}\rangle^2},
		\end{array}
		\right.
	\end{equation}
	provided that $\|d^k\|^2_2\|z^k-z^{k-1}\|^2_2-\langle d^k,z^k-z^{k-1}\rangle^2\neq0$.
	We can see that in order to compute $\alpha_k$ and $\beta_k$, we need to calculate $\langle d^{k},\widehat{x}\rangle$ and $\langle z^k-z^{k-1}, \widehat{x}\rangle$. By the definition of $d^k$, we know that
	$$\langle d^k, \widehat{x}\rangle=\langle S_{k} S_{k}^\top(Ax^k-b), A\widehat{x}\rangle=\langle S_{k} S_{k}^\top (Ax^k-b), b\rangle$$
	is calculable. Next, we show that we can compute $\langle z^k-z^{k-1},\widehat{x}\rangle$ by an incremental method.
	From \eqref{opt-prob} and the definition of $z^k$, we know that
	$$
	z^k-z^{k-1}=-\alpha_{k-1}d^{k-1}+\beta_{k-1}(z^{k-1}-z^{k-2}).
	$$
	Hence, we have
	$$
	\begin{aligned}
		\langle z^k-z^{k-1},\widehat{x}\rangle&=-\alpha_{k-1}\langle d^{k-1}, \widehat{x}\rangle+\beta_{k-1}\langle z^{k-1}-z^{k-2}, \widehat{x}\rangle
		\\
		&=-\alpha_{k-1}\langle S_{k-1} S_{k-1}^\top (Ax^{k-1}-b), b\rangle+\beta_{k-1}\langle z^{k-1}-z^{k-2}, \widehat{x}\rangle,
	\end{aligned}
	$$
	which means that if the value of $\langle z^{k-1}-z^{k-2},\widehat{x}\rangle$ is available, then we are able to compute $\langle z^k-z^{k-1}, \widehat{x}\rangle$. 
	Let $\rho_k:=\langle z^k-z^{k-1}, \widehat{x}\rangle$.
	If we choose $z^1-z^0\in\text{Range}(A^\top)$, i.e. $z^1-z^0=A^\top\xi^0$ with an initialized  $\xi^0\in\mathbb{R}^{m}$, then $\rho_1=\langle\xi^0,A\widehat{x}\rangle=\langle\xi^0, b\rangle$ is calculable. Consequently, using the recursive relationship
	$$
	\rho_k=-\alpha_{k-1}\langle S_{k-1} S_{k-1}^\top(Ax^{k-1}-b), b\rangle+\beta_{k-1}\rho_{k-1},
	$$
	we know that  $\{\rho_k\}_{k\geq 1}$ is available.
	Thus, \eqref{adap-s-m}  can be computed by
	%
	\begin{equation}\label{adap-s-mm}
		\left\{
		\begin{array}{ll}
			\alpha_k=\gamma\frac{\|S^\top_k(Ax^k-b)\|^2_2\|z^k-z^{k-1}\|^2_2-\langle d^k,z^k-z^{k-1}\rangle\left(\langle z^k-z^{k-1}, x^k\rangle-\rho_k\right)}
			{\|d^k\|^2_2\|z^k-z^{k-1}\|^2_2-\langle d^k,z^k-z^{k-1}\rangle^2}, \\[0.3cm]
			\beta_k=\gamma\frac{-\|d^{k}\|^2_2\left(\langle z^k-z^{k-1}, x^k\rangle-\rho_k\right)+\langle d^k,z^k-z^{k-1}\rangle\|S^\top_k(Ax^k-b)\|^2_2}
			{\|d^k\|^2_2\|z^k-z^{k-1}\|^2_2-\langle d^k,z^k-z^{k-1}\rangle^2}.
		\end{array}
		\right.
	\end{equation}

	
	
	Now we are ready to present the ASDCD method, which is formally described in Algorithm \ref{amRDCD}. We note that unlike the ASHBM method \cite[Algorithm 4.1]{zeng2024adaptive}, which requires a specific condition on its parameters, Algorithm \ref{amRDCD} does not impose any restrictions on \(S_k\) to ensure \(\|d^k\|^2_2\|z^k-z^{k-1}\|^2_2 - \langle d^k, z^k-z^{k-1} \rangle^2 \neq 0\).
	\begin{algorithm}[htpb]
		\caption{Adaptive SDCD (ASDCD)\label{amRDCD}}
		\begin{algorithmic}
			\Require
			$A\in \mathbb{R}^{m\times n}$, $b\in \mathbb{R}^m$, probability spaces $\{(\Omega_k, \mathcal{F}_k, P_k)\}_{k\geq1}$, $k=1$ and initial points $\xi^0\in\mathbb{R}^m$, $z^0\in\text{Range}(A^\top)$. Set $z^1=z^0+A^\top\xi^0$, $\rho_1=\langle\xi^0, b\rangle$ , $x^0=\nabla f^*(z^{0})$ and $x^1=\nabla f^*(z^{1})$.
			\begin{enumerate}
				\item[1:] Randomly select a sampling matrix $S_{k}\in\Omega_k$.
				\item[2:] Compute $d^k=A^\top S_{k} S_{k}^\top(Ax^k-b)$.
				\item[3:]  If
				$\|d^k\|^2_2\|z^k-z^{k-1}\|^2_2-\langle d^k,z^k-z^{k-1}\rangle^2=0$
				
				\qquad Compute $\alpha_k$ in \eqref{alp} with $\zeta=1$ and set  $\beta_k=0$.
				
				
				Otherwise,
				
				\qquad Compute the parameters $\alpha_k$ and $\beta_k$ in \eqref{adap-s-mm}.

				\item[4:] Compute
				$$
				\begin{aligned}
					z^{k+1}&=z^k-\alpha_k d^k+\beta_k(z^k-z^{k-1}),\\
					\rho_{k+1}&=-\alpha_{k}\langle S_{k} S_{k}^\top(Ax^k-b), b\rangle+\beta_{k}\rho_{k}.
				\end{aligned}
				$$
				\item[5:] Compute
				$$
				x^{k+1}=\nabla f^*(z^{k+1}).
				$$
				\item[6:] If the stopping rule is satisfied, stop and go to output. Otherwise, set $k=k+1$ and go to Step $1$.
			\end{enumerate}
			
			\Ensure
			The approximate solution.
		\end{algorithmic}
	\end{algorithm}

	\subsection{The relationship with conjugate gradient type methods} \label{Rel}
	This subsection aims to demonstrate that if the sample spaces $\Omega_k=\{I\}$ and $f(x)=\frac{\gamma}{2}\|x-u\|^2_2-v$, then Algorithm \ref{amRDCD} reduces to  the \emph{conjugate gradient normal equation error} (CGNE) method \cite[Section 11.3.9]{golub2013matrix}, which is a variant of the conjugate gradient method.
	The following lemma is useful in our discussion.
	
	\begin{lemma}\label{lemma-obs-xie}
		The inequality in \eqref{MT-xie} is always an equality if and only if  $f^*(z)=\frac{1}{2\gamma}\|z\|^2_2+u^\top z+v$, where $u\in\mathbb{R}^n$ and $v\in\mathbb{R}$ are constants, i.e. $f(x)=\frac{\gamma}{2}\|x-u\|^2_2-v$.
	\end{lemma}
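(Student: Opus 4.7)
The inequality invoked in \eqref{MT-xie} is the standard quadratic upper bound \eqref{strongly-convex} applied at $x=z^k$ and $y=z$, i.e.
$$f^*(z)\ \leq\ f^*(z^k)+\langle\nabla f^*(z^k),z-z^k\rangle+\tfrac{1}{2\gamma}\|z-z^k\|_2^2.$$
Since $\alpha,\beta\in\mathbb{R}$ are free, the point $z=z^k-\alpha d_k+\beta(z^k-z^{k-1})$ ranges freely (at least over a neighborhood that one can then extend by translation arguments), so ``always equality'' has to be read as: the descent lemma holds with equality for every $x,y\in\mathbb{R}^n$. The plan is therefore to characterize those $\gamma$-strongly convex $f$ for which the conjugate $f^*$ saturates its $\frac{1}{\gamma}$-smoothness bound identically.

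For the (easy) ``if'' direction, I would simply substitute $f^*(z)=\tfrac{1}{2\gamma}\|z\|_2^2+p^\T z+q$ into both sides of the inequality. Using $\nabla f^*(z)=\tfrac{1}{\gamma}z+p$, the right-hand side minus the left-hand side collapses, after completing the square, to $\tfrac{1}{2\gamma}\|y-x\|_2^2-\tfrac{1}{2\gamma}\|y-x\|_2^2=0$, giving equality. That this choice of $f^*$ corresponds to $f(x)=\tfrac{\gamma}{2}\|x-p\|_2^2-q$ is a one-line Fenchel computation (complete the square in the defining supremum of $f^*$, or equivalently use that conjugation is an involution for closed proper convex functions).

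For the ``only if'' direction, assume
$$f^*(y)-f^*(x)-\langle\nabla f^*(x),y-x\rangle\ =\ \tfrac{1}{2\gamma}\|y-x\|_2^2\qquad\forall\,x,y\in\mathbb{R}^n.$$
Fix $x$ and differentiate the identity in $y$; $f^*$ is $C^1$ since $f$ is strongly convex, so this is legitimate and yields
$$\nabla f^*(y)-\nabla f^*(x)\ =\ \tfrac{1}{\gamma}(y-x).$$
Setting $x=0$ and writing $p:=\nabla f^*(0)$ gives $\nabla f^*(y)=\tfrac{1}{\gamma}y+p$. Integrating (or equivalently plugging back and using $f^*(0)=:q$) forces $f^*(y)=\tfrac{1}{2\gamma}\|y\|_2^2+p^\T y+q$. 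Applying the Fenchel conjugate once more then yields $f(x)=\tfrac{\gamma}{2}\|x-p\|_2^2-q$, as claimed.

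The argument is essentially bookkeeping, so I do not expect a serious technical obstacle; the only point that deserves care is the reading of ``always''. If the authors intend equality only along the actual iterates, one could still recover the conclusion by noting that $z^k-z^{k-1}$ and $d_k$, together with the freedom to vary the initial data $\xi_0,z^0$ and the random matrices $S_k$, generate enough pairs $(x,y)$ to force the equality globally by continuity; I would include a short remark to that effect so that the differentiation step in $y$ above is unambiguously justified.
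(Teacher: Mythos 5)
Your proposal is correct and follows essentially the same route as the paper: reduce ``always equality'' to the global identity $f^*(y)=f^*(x)+\langle\nabla f^*(x),y-x\rangle+\frac{1}{2\gamma}\|y-x\|_2^2$ for all $x,y$, deduce that $f^*$ is the stated quadratic, and verify the converse by substitution. In fact you supply more detail than the paper does (the differentiation/integration step, which could even be shortened by just setting $x=0$ in the identity), so there is nothing to correct.
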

	\begin{proof}
		Note that the inequality in \eqref{MT-xie} follows from \eqref{strongly-convex}. Hence,  the inequality in \eqref{MT-xie} is always an equality if and only if for any $x,y\in\mathbb{R}^n$,
		\begin{equation}\label{xie-equality}
			f^*(y) = f^*(x)+\left\langle\nabla f^*(x), y-x\right\rangle+\frac{1}{2 \gamma}\|y-x\|_{2}^2.
		\end{equation}
		On the one hand,  \eqref{xie-equality} can be rewritten as $f^*(z)=\frac{1}{2\gamma}\|z\|^2_2+u^\top z+v$, where $u$ and $v$ are constants. On the other hand, if $f^*(z)=\frac{1}{2\gamma}\|z\|^2_2+u^\top z+v$, one can verify that \eqref{xie-equality} holds. This completes the proof of this lemma.
	\end{proof}
	
	
	Since $v$ and $\gamma$ do not effect the solution of the minimization problem, we can simplify the problem by considering the case where the objective function $f(x)=\frac{1}{2}\|x-u\|^2_2$.
	Now the inequality in \eqref{MT-xie} becomes an equality, we know that \eqref{adap-s-m}  provides the exact solutions to the optimization problem \eqref{opt-prob}  if $\|d^k\|^2_2\|z^k-z^{k-1}\|^2_2-\langle d^k,z^k-z^{k-1}\rangle^2\neq 0$.
	Furthermore,  the sequences of iterates in Algorithm \ref{amRDCD} satisfy $z^k=x^k-u$ for $k\geq0$. Hence, we can rewrite the minimizers in \eqref{adap-s-m} as follows,
	\begin{equation}\label{adap-s-m11}
		\left\{
		\begin{array}{ll}
			\alpha_k=\frac{\langle d^{k},x^k-\widehat{x}\rangle\|x^k-x^{k-1}\|^2_2-\langle d^k,x^k-x^{k-1}\rangle\langle x^k-x^{k-1},x^k-\widehat{x}\rangle}
			{\|d^k\|^2_2\|x^k-x^{k-1}\|^2_2-\langle d^k,x^k-x^{k-1}\rangle^2}, \\[0.3cm]
			\beta_k=\frac{-\|d^{k}\|^2_2\langle x^k-\widehat{x},x^k-x^{k-1}\rangle+\langle d^k,x^k-x^{k-1}\rangle \langle d^k,x^k-\widehat{x}\rangle}
			{\|d^k\|^2_2\|x^k-x^{k-1}\|^2_2-\langle d^k,x^k-x^{k-1}\rangle^2}.
		\end{array}
		\right.
	\end{equation}
	When the sample spaces $\Omega_k=\{I\}$, we know that the iteration scheme of $z^{k+1}$ in Algorithm \ref{amRDCD} becomes
	$$
	x^{k+1}=x^k-\alpha_k A^\top (Ax^k-b)+\beta_k(x^k-x^{k-1}).
	$$
	It follows from \cite[Section 4]{zeng2024adaptive} that for $k\geq 1$, \eqref{adap-s-m11} can be simplified to
	\[
		\left\{\begin{array}{ll}
			\alpha_k
			=\frac{\| x^k-x^{k-1} \|_2^2 \|Ax^k-b\|_2^2}{\| A^\top(Ax^k-b) \|_2^2 \| x^k-x^{k-1} \|_2^2 - \langle A^\top(Ax^k-b), x^k-x^{k-1} \rangle^2},
			\\[0.3cm]
			\beta_k
			=
			\frac{\langle A^\top(Ax^k-b),x^k-x^{k-1} \rangle \|Ax^k-b\|_2^2}{\| A^\top(Ax^k-b) \|_2^2 \| x^k-x^{k-1} \|_2^2 - \langle A^\top(Ax^k-b), x^k-x^{k-1} \rangle^2}.
		\end{array}
		\right.
	\]
	Moreover, Algorithm \ref{amRDCD} can be expressed in the following  equivalent form.
	
	\begin{prop}[\cite{zeng2024adaptive}, Theorem 5.1]\label{SCGNE}
		Let $f(x)=\frac{1}{2}\|x-u\|^2_2$ and assume that for any $k\geq0$, the sample spaces $\Omega_k=\{I\}$. Suppose that $x^0\in u+\text{Range}(A^\top)$ is the  initial point in Algorithm \ref{amRDCD}  and set $r^0=Ax^0-b$, $p^0=-A^\top r^0$. Let $x^1$ be generated by Algorithm \ref{RDCD} with $\zeta=1$. Then for any $k\geq0$,  Algorithm \ref{amRDCD} can be  equivalently rewritten as 
		\begin{equation}
			\label{EF}
			\left\{\begin{array}{ll}
				\delta_k =\|r^k\|_2^2 / \|p^k\|_2^2,
				\\[1.7mm]
				x^{k+1}=x^k+\delta_k p^k,
				\\[1.7mm]
				r^{k+1}= r^k+\delta_kAp^k,
				\\[1.7mm]
				\eta_k=\langle A^\top  r^{k+1}, p^k \rangle/ \|p^k\|_2^2=\|r^{k+1}\|^2_2/\|r^k\|^2_2,
				\\[1.7mm]
				p^{k+1}= - A^\top r^{k+1}+\eta_k p^k.
			\end{array}
			\right.
		\end{equation}
	\end{prop}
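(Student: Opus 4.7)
The plan is to verify the equivalence by induction on $k$ after substituting the specific choices $\Omega_k=\{I\}$ and $f(x)=\frac{1}{2}\|x-p\|_2^2$ into Algorithm \ref{amRDCD}. Under this specialization one has $f^*(z)=\frac{1}{2}\|z\|_2^2+p^\top z$, hence $\nabla f^*(z)=z+p$, so $x^k=z^k+p$ for every $k$; this gives $z^k-z^{k-1}=x^k-x^{k-1}$, $d_k=A^\top r^k$, and $\nabla g(\lambda^k)=r^k$. By Lemma \ref{lemma-obs-xie}, the majorization used in \eqref{MT-xie} becomes an equality, so the pair $(\alpha_k,\beta_k)$ in \eqref{adap-s-m} actually minimizes $g$ over the two-parameter family $\lambda=\lambda^k-\alpha S_k S_k^\top \nabla g(\lambda^k)+\beta(\lambda^k-\lambda^{k-1})$. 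This is the starting point.

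The first step would be to extract two orthogonality relations from the first-order optimality of $(\alpha_{k-1},\beta_{k-1})$ at the previous iterate. Setting $\partial g/\partial\alpha=0$ and $\partial g/\partial\beta=0$ at $\lambda^k$ and using $\nabla g(\lambda^k)=r^k$ together with $A^\top(\lambda^{k-1}-\lambda^{k-2})=x^{k-1}-x^{k-2}$ gives
\begin{equation*}
\langle r^k,r^{k-1}\rangle=0\qquad\text{and}\qquad (x^k-\widehat{x})^\top(x^{k-1}-x^{k-2})=0.
\end{equation*}
Combining these with the recurrence $x^k-x^{k-1}=-\alpha_{k-1}A^\top r^{k-1}+\beta_{k-1}(x^{k-1}-x^{k-2})$ yields the crucial secondary orthogonality $(x^k-\widehat{x})^\top(x^k-x^{k-1})=0$ for $k\geq 2$. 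Since $z^0\in\mbox{Range}(A^\top)$ propagates to $x^k-p\in\mbox{Range}(A^\top)$, and $\widehat{x}-p\in\mbox{Range}(A^\top)$ because $\widehat{x}$ is the metric projection of $p$ onto $\{x:Ax=b\}$, one has $x^k-\widehat{x}\in\mbox{Range}(A^\top)$, so $d_k^\top(x^k-\widehat{x})=r^k{}^\top A(x^k-\widehat{x})=\|r^k\|_2^2$. Feeding $d_k^\top(x^k-\widehat{x})=\|r^k\|_2^2$ and $(x^k-\widehat{x})^\top(x^k-x^{k-1})=0$ into \eqref{adap-s-m11} directly collapses the parameters to the simpler expressions \eqref{Parameters}.

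Next I would run the induction that identifies \eqref{amRDCD} with \eqref{EF}. For the base step $k=1$, taking $\xi_0=0$ (so that $z^1=z^0$) triggers the degenerate branch of the algorithm: $\beta_1=0$ and $\alpha_1=\|r^1\|_2^2/\|A^\top r^1\|_2^2=\|r^1\|_2^2/\|p_1\|_2^2=\delta_1$, whence $x^2=x^1+\delta_1 p_1$. For the inductive step, assuming $x^j-x^{j-1}=\delta_{j-1}p_{j-1}$ for all $j\leq k$, substitute this into \eqref{Parameters} and use the elementary identity
\begin{equation*}
\|p_k\|_2^2=\|A^\top r^k\|_2^2-\frac{\langle A^\top r^k,p_{k-1}\rangle^2}{\|p_{k-1}\|_2^2},
\end{equation*}
which follows from expanding $p_k=-A^\top r^k+\eta_{k-1}p_{k-1}$ with $\eta_{k-1}=\langle A^\top r^k,p_{k-1}\rangle/\|p_{k-1}\|_2^2$. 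A short computation then yields $\alpha_k=\delta_k$ and $\beta_k\delta_{k-1}=\delta_k\eta_{k-1}$, so $x^{k+1}-x^k=-\delta_k A^\top r^k+\delta_k\eta_{k-1}p_{k-1}=\delta_k p_k$, which closes the induction. The alternative formula $\eta_k=\|r^{k+1}\|_2^2/\|r^k\|_2^2$ then follows by inserting $r^{k+1}=r^k+\delta_k Ap_k$ into $\langle r^{k+1},r^k\rangle=0$ and using $\delta_k=\|r^k\|_2^2/\|p_k\|_2^2$.

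The principal obstacle is not the algebra but the orderly management of the orthogonality chain: getting the $\alpha$-optimality and $\beta$-optimality to translate cleanly into statements about residuals and directions in $x$-space, and then verifying that the degenerate-branch initialization produces exactly $(\delta_1,p_1)$ rather than some off-by-one mismatch. Once that scaffolding is in place, the identification with the CGNE recurrence is essentially forced by the simplified parameter formulas.
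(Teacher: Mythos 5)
Your proposal is correct, but it is worth pointing out that the paper itself offers no proof of this proposition: both the simplification of \eqref{adap-s-m11} to \eqref{Parameters} and the identification with the CGNE recurrence are imported wholesale from \cite{zeng2023adaptive} (Proposition~4.1 and Theorem~5.1 there). Your argument supplies exactly the content the paper outsources, and the logical chain is sound: exactness of the majorization (Lemma~\ref{lemma-obs-xie}) turns \eqref{adap-s-m} into a genuine two-dimensional exact minimization, whose first-order conditions at step $k-1$ give $\langle r^k,r^{k-1}\rangle=0$ and $\langle x^k-\widehat{x},\,x^{k-1}-x^{k-2}\rangle=0$; combined with the update recurrence these yield $\langle x^k-\widehat{x},\,x^k-x^{k-1}\rangle=0$ and $d_k^\top(x^k-\widehat{x})=\|r^k\|_2^2$ (the latter is pure algebra, $\langle A^\top r^k, x^k-\widehat{x}\rangle=\langle r^k, A(x^k-\widehat{x})\rangle=\|r^k\|_2^2$, and does not actually need the range condition you invoke), which collapses \eqref{adap-s-m11} to \eqref{Parameters}; the induction with the identity $\|p_k\|_2^2=\|A^\top r^k\|_2^2-\langle A^\top r^k,p_{k-1}\rangle^2/\|p_{k-1}\|_2^2$ then forces $\alpha_k=\delta_k$ and $\beta_k\delta_{k-1}=\delta_k\eta_{k-1}$. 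One point deserves emphasis: your base case requires $\xi_0=0$ (so that $z^1=z^0$ and the degenerate branch fires, producing the pure steepest-descent step $\alpha_1=\|r^1\|_2^2/\|p_1\|_2^2$, $\beta_1=0$). The proposition as stated only hypothesizes $x^1\in p+\mathrm{Range}(A^\top)$ and is silent about $\xi_0$; for $\xi_0\neq 0$ the first step minimizes over the two-dimensional affine set spanned by $\nabla g(\lambda^1)$ and $\xi_0$ and does not coincide with the CGNE step, so the restriction you impose is not merely convenient but necessary for the equivalence to hold verbatim --- this is an implicit hypothesis the paper's statement omits, and your proof correctly identifies and handles it. Your step-$k{=}2$ orthogonality also survives the fact that $\beta_1$ is forced to zero rather than optimized, precisely because $x^1-x^0=A^\top\xi_0=0$ under this initialization.
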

	The iteration scheme \eqref{EF} is  exactly the \emph{conjugate gradient normal equation error} (CGNE) method \cite[Section 11.3.9]{golub2013matrix}, a variant of the conjugate gradient method for solving
	$$AA^\top y= b, x=A^\top y,$$
	which is  equivalent to $Ax=b$.
	It is worth noting that for general probability spaces $\{(\Omega_k, \mathcal{F}_k, P_k)\}_{k\geq0}$, if we require the sampling matrices $S_k$ to be chosen such that $S_k^\top (Ax^k-b)\neq 0$ for $k\geq 0$, then Algorithm \ref{amRDCD} can be utilized to establish a novel stochastic conjugate gradient (SCG) method. For further details on this topic, please refer to \cite{zeng2024adaptive}.
	
	\subsection{Extension to general $L$-smooth convex functions}
	\label{subsection-4-2}
	
	Since the objective function $f$ is $\gamma$-strongly convex, it follows from \eqref{strongly-convex} that the objection function $g$ in \eqref{opt-prob} is $\frac{\|A\|^2_2}{\gamma}$-smooth and convex. A natural and interesting question is that \emph{can our adaptive heavy ball momentum technique be extended to general $L$-smooth convex functions?}
	
	Similar to \eqref{opt-prob}, we consider the following optimization problem
	\begin{equation}\label{opt-prob-general}
				\begin{aligned}
					\min\limits_{x}\ \ \varphi(x)\ \
					\text{subject to} \ \ x=x^{k}-\alpha S_kS_k^\top\nabla \varphi\big(x^{k}\big)+\beta\big(x^{k}-x^{k-1}\big), \ \alpha,\beta\in\mathbb{R},
				\end{aligned}
	\end{equation}
	where $\varphi$ is $L$-smooth convex and $S_k$ is randomly chosen from $\Omega_k$.  We also use the majorization technique \cite{li2016majorized,chen2017efficient} to find an approximate solution of  \eqref{opt-prob-general}. We have
	$$\begin{aligned}
		\varphi(x) \leq& \varphi(x^k)+\langle \nabla \varphi(x^k), x-x^k\rangle+\frac{L}{2}\|x-x^k\|^2_2
		\\
		=& \varphi(x^k)-\left\langle \nabla \varphi(x^k),\alpha S_{k} S_{k}^\top\nabla \varphi(x^k)-\beta(x^k-x^{k-1}) \right\rangle
		\\
		&+\frac{L}{2}\left\|\alpha S_{k} S_{k}^\top\nabla \varphi(x^k)-\beta(x^k-x^{k-1})\right\|^2_2.
		%
	\end{aligned}$$
	The optimal value of the right hand is obtained when
	\begin{equation}\label{adap-general}
		\left\{
		\begin{array}{ll}
			\alpha_k=\frac{1}{L}\frac{\|S^\top_k\nabla \varphi(x^k)\|^2_2\|x^k-x^{k-1}\|^2_2-\langle S_k S^\top_k\nabla \varphi(x^k), x^k-x^{k-1}\rangle\langle \nabla \varphi(x^k),x^k-x^{k-1}\rangle}
			{\|S_kS^\top_k\nabla \varphi(x^k)\|^2_2\|x^k-x^{k-1}\|^2_2-\langle S_kS^\top_k\nabla \varphi(x^k) ,x^k-x^{k-1}\rangle^2}, \\[0.5cm]
			\beta_k=\frac{1}{L}\frac{-\|S_kS^\top_k\nabla \varphi(x^k)\|^2_2\langle \nabla \varphi(x^k),x^k-x^{k-1}\rangle+\|S^\top_k\nabla \varphi(x^k)\|^2_2\langle S_kS^\top_k\nabla \varphi(x^k) ,x^k-x^{k-1}\rangle}
			{\|S_kS^\top_k\nabla \varphi(x^k)\|^2_2\|x^k-x^{k-1}\|^2_2-\langle S_kS^\top_k\nabla \varphi(x^k) ,x^k-x^{k-1}\rangle^2}
		\end{array}
		\right.
	\end{equation}
	provided that $\|S_kS^\top_k\nabla \varphi(x^k)\|^2_2\|x^k-x^{k-1}\|^2_2-\langle S_kS^\top_k\nabla \varphi(x^k) ,x^k-x^{k-1}\rangle^2\neq 0$. However, in practice, it may be difficult to obtain the solutions $\alpha_k$ and $\beta_k$ because they require calculating $\langle \nabla \varphi(x^k),x^k-x^{k-1}\rangle$  and  the full gradient $\nabla \varphi(x^k)$ may not be easy to be obtained. In other words, if one is able to efficiently compute $\langle \nabla \varphi(x^k),x^k-x^{k-1}\rangle$, then the strategy provided by \eqref{adap-general} can be used to develop an adaptive stochastic heavy ball momentum method.
	
	When $\Omega_k=\{I\}$, \eqref{adap-general} reduces to $\alpha_k=1/L$ and $\beta_k=0$, which indicates that our approach reduces to the gradient method in this case.
	Since the selection of $\alpha_k$ and $\beta_k$ relies on solving the optimization problem \eqref{opt-prob-general},  our approach reconfirms the superiority of the traditional parameters of the regular gradient method.
	It also implies that in the context of stochastic methods, our adaptive heavy ball momentum technique could compensate for the loss of information caused by only partly using the gradients, via utilizing iteration information.

	Finally, we note that the adaptive HBM technique can be further extended to functions satisfying relative smoothness \cite{lu2018relatively}. Let \( h: \mathbb{R}^{n} \rightarrow \mathbb{R} \) be a differentiable convex function. We say that \( \varphi \) is \( L \)-smooth relative to \( h \) on \( \mathbb{R}^{n} \) if for all \( x, y \in \mathbb{R}^{n} \), it holds that
	\[
	\varphi(y) \leq \varphi(x) + \langle \nabla \varphi(x), y - x \rangle + L D_{h, \nabla h(x)}(x, y).
	\]
	In particular, if \( h(x) = \frac{1}{2L} \| x \|_{H}^{2} \), where \( H \in \mathbb{R}^{n \times n} \) is a positive definite matrix, then the above inequality reduces to
	\[
	\varphi(y) \leq \varphi(x) + \langle \nabla \varphi(x), y - x \rangle + \frac{1}{2} \| x - y \|_{H}^{2}.
	\]
	The computation of the optimal parameters \( \alpha_{k} \) and \( \beta_{k} \) in the resulting majorized optimization problem requires matrix-vector products involving \( H \). When \( H \) is dense, these computations can be expensive. However, if \( H \) is sparse, e.g. diagonal or scalar matrix, the cost is significantly reduced. Therefore, as long as \( H \) is chosen such that the computation of \( \alpha_{k} \) and \( \beta_{k} \) remains tractable, our adaptive HBM technique can be effectively applied in this more general setting.

	\subsection{Geometric viewpoint and convergence analysis}
	In this subsection, we first give a view of geometric
	interpretation of our approach and then establish the convergence of Algorithm \ref{amRDCD}.
	We first introduce some auxiliary variables. Recall that $d^k$ is defined as $d^k=A^\top S_k S_k^\top(Ax^k-b)$,
	we define two affine sets as
	$$\begin{aligned}
		\widetilde{\Pi}_k:=&x^k+\text{Span}\left\{d^k, z^k-z^{k-1}\right\},\\
		\Pi_k:=&z^k+\text{Span}\left\{d^k, z^k-z^{k-1}\right\},
	\end{aligned}$$
	and let
	\begin{equation}
		\label{def-wk}
		w^{k+1}:=\frac{1}{\gamma}(z^{k+1}-z^k)+x^k=x^k-\frac{\alpha_k}{\gamma}d^k+\frac{\beta_k}{\gamma}(z^k-z^{k-1}).
	\end{equation}
	Since the objective function in \eqref{xie-mo} can be  equivalently written as
	$$
	h^k(\alpha, \beta)= \frac{\gamma}{2}\left\|x^k-\frac{\alpha}{\gamma} d^k+\frac{\beta}{\gamma}(z^k-z^{k-1}) -\widehat{x}\right\|^2_2-\frac{\gamma}{2}\left\|x^k-\widehat{x}\right\|^2_2,
	$$
	the majorized optimization problem \eqref{xie-mo} now becomes
	$$
	\min_{\alpha,\beta\in\mathbb{R}} \left\|x^k-\frac{\alpha}{\gamma} d^k+\frac{\beta}{\gamma}(z^k-z^{k-1}) -\widehat{x}\right\|^2_2,
	$$
	which implies that $w^{k+1}$ defined above is the orthogonal projection of $\widehat{x}$ onto the affine set $\widetilde{\Pi}_k$. We define
	$$\phi_{z^k}(x):=\frac{\gamma}{2}\left\|x+\frac{1}{\gamma}z^k-\nabla f^*(z^k)\right\|^2_2-\left(
	\frac{1}{2\gamma}\|z^k\|^2_2+f^*(z^k)-\left\langle z^k,\nabla f^*(z^k)\right\rangle\right),$$
	and hence
	$
	\phi_{z^k}^*(z)=f^*(z^k)+\langle \nabla f^*(z^k),z-z^k\rangle+\frac{1}{2\gamma}\|z-z^k\|^2_2.
	$
	Since $f^*$ is $1/\gamma$-smooth convex, we know that
	$
	f^*(z)\leq \phi_{z^k}^*(z)
	$, i.e.  $\phi_{z^k}^*(z)$ is a quadratic approximation of $f^*(z)$.
	Note that $x^k=\nabla f^*(z^k)$, we have
	$$
	\nabla\phi_{z^k}(w^{k+1})=\gamma\left(w^{k+1}+\frac{1}{\gamma}z^{k}-x^k\right)=z^{k+1}.
	$$
	This means that the next iterate $z^{k+1}$ is determined by $z^{k+1}=\nabla \phi_{z^k}(w^{k+1})$.
	The geometric interpretation is presented in Figure \ref{GI1}. 
	Accordingly, if $\phi_{z^k}^*(z)$ serves as a reliable approximation of $f^*(z)$, we can consider $z^{k+1}=\nabla\phi_{z^k}(w^{k+1})$ as a suitable approximation of $z^{k+1}_*\in \partial f(w^{k+1})$.
	
	
	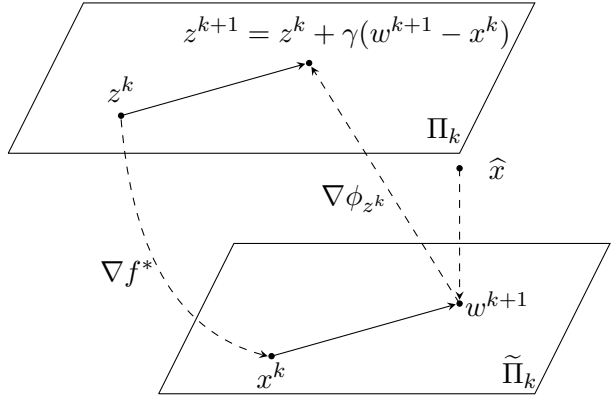
\begin{figure}[hptb]
		\centering
		\begin{tikzpicture}
			\draw (0,0)--(5,0)--(6,2)--(1,2)--(0,0);
			\draw (-2,3.2)--(4,3.2)--(5,5.2)--(-1,5.2)--(-2,3.2);
			\filldraw (1.5,0.5) circle [radius=1pt]
			(4,3) circle [radius=1pt]
			(4,1.2) circle [radius=1pt]
			(2,4.4) circle [radius=1pt]
			(-0.5,3.7) circle [radius=1pt];
			\draw (1.5,0.25) node {$x^k$};
			\draw (-0.5,4.05) node {$z^k$};
			\draw (4.5,3) node {$\widehat{x}$};
			\draw (4.5,1.2) node {$w^{k+1}$};
			\draw (2.5,4.8) node {$z^{k+1}=z^k+\gamma(w^{k+1}-x^k)$};
			\draw (4.8,0.3) node {$\widetilde{\Pi}_k$};
			\draw (2.6,2.6) node {$\nabla\phi_{z^k}$};
			\draw (-0.42,1.6) node {$\nabla f^*$};
			\draw (3.8,3.5) node {$\Pi_k$};
			\draw [dashed,-stealth] (4,3) -- (4,1.25);
			\draw [-stealth] (1.5,0.5) -- (3.95,1.19);
			\draw [-stealth] (-0.5,3.7) -- (1.95,4.39);
			\draw [dashed,-stealth] (4,1.2)-- (2.03,4.352);
			\draw [dashed,-stealth] (-0.5,3.65) to[out=275, in=165] (1.45,0.5);
		\end{tikzpicture}
		\caption{A geometric interpretation of Algorithm \ref{amRDCD}. The iterate $x^k=\nabla f^*(z^k)$ and $w^{k+1}$ is the orthogonal projection of $\widehat{x}$ onto the affine set $\widetilde{\Pi}_k$. Then the next iterate $z^{k+1}=\nabla \phi_{z^k}(w^{k+1})$.
		}	
		\label{GI1}
	\end{figure}
	
	Next, we establish the convergence result for Algorithm \ref{amRDCD}.
	Define
	\begin{equation}\label{y}
		y^{k+1}:=x^k-\frac{L_{\text{adap}}^{k,\gamma}}{\gamma}d^k,
	\end{equation}
	where $L_{\mathrm{adap}}^{k,\gamma}$ is given by \eqref{Lk}. Let
	\[
	\mathcal{Q}_k := \left\{ S \in \Omega_k \mid S^\top (Ax^k - b) \neq 0 \right\}
	\]
	and define the vector
	\[
	u^k := \langle d^k, z^k - z^{k-1} \rangle d^k - \| d^k \|_2^2 (z^k - z^{k-1}).
	\]
	Furthermore, let $\theta_k$ denote the angle between $y^{k+1}-\widehat{x}$ and $u^k$, i.e.
	\begin{equation}
		\label{def-delta}
		\theta_k:=\arccos\frac{\langle y^{k+1}-\widehat{x},u^k\rangle}{\|y^{k+1}-\widehat{x}\|_2\|u^k\|_2},
	\end{equation}
	where we define $\frac{0}{0}=0$. We now present convergence results for
	Algorithm \ref{amRDCD}. The detailed proof is provided in the Appendix \ref{secA2}.

	\begin{thm}
		\label{thm-mrdcd}
		Let $f$ be $\gamma$-strongly convex and strongly admissible. Suppose that the probability spaces $\{(\Omega_k, \mathcal{F}_k, P_k)\}_{k\geq 1}$ satisfy Assumption \ref{Ass}. Let $\{x^k\}_{k\geq1}$ and $\{z^k\}_{k\geq1}$ be the sequences of iterates generated by Algorithm \ref{amRDCD}. Then
		$$
		\mathbb{E}{_k}\left[D_{f,z^{k+1}}(x^{k+1},\widehat{x}) \right]\leq \left(1-\frac{\gamma\nu\lambda_{\min}\left(H_k\right) }{2\lambda_{\max}^{(k)}} \right)D_{f,z^{k}}(x^{k},\widehat{x})-\frac{\gamma}{2}\mathbb{E}{_k}\left[\cos^2\theta_k \|y^{k+1}-\widehat{x}\|_2^2\right],
		$$
		where  $\widehat{x}$ is the solution of \eqref{main-prob}, $\nu$, $H_k$, $\lambda_{\max}^{(k)}$, $y^{k+1}$ and $\theta_k$ are given by \eqref{nu-cons}, \eqref{matrix-H}, \eqref{lambda-max}, \eqref{y} and \eqref{def-delta}, respectively. 
	\end{thm}

	\begin{REMK}
		Upon comparison of Theorem \ref{thm-rdcd} and Theorem \ref{thm-mrdcd}, it can be observed that the ASDCD method exhibits convergence rate that is at least as fast as that of the SDCD method. Indeed, for certain objective function $f(x)$ and probability spaces $\{(\Omega_k, \mathcal{F}_k, P_k)\}_{k\geq 1}$, we can show that the convergence rate in Theorem \ref{thm-mrdcd} can be strictly smaller than that in Theorem \ref{thm-rdcd}. For example, for the case where $f(x)=\frac{1}{2}\|x\|^2_2$ and the sample spaces $\Omega_k=\{I\}$ for any $k\geq1$.  We refer to \cite[Remark 5.3]{zeng2024adaptive} for more details.
	\end{REMK}

\subsection{Efficient implementation for sparse data}
		
Algorithm \ref{amRDCD} exhibits a computational  disadvantage when applied to sparse matrices $A$. Indeed, the vectors $z^{k}$ and $z^{k-1}$ may be dense. Consequently, updating the momentum term requires full-dimensional vector operations, leading to a cost of $\mathcal{O}(n)$ arithmetic operations per iteration for obtaining  $z^{k+1}$. In contrast, the SDCD method can potentially circumvent such computational costs when $A$ is sparse, as  $d^{k}$ may remain sparse under this setting. Inspired by the idea of variable transformation adopted in \cite{lee2013efficient,fercoq2015accelerated}, we reformulate Algorithm \ref{amRDCD} into an equlvalent form, presented as Algorithm \ref{amRDCD_EI}, where we define $\frac{0}{0}=0$ by convention.

						\begin{algorithm}[htpb]
					\caption{ASDCD (written in a form facilitating efficient implementation) \label{amRDCD_EI}}
					\begin{algorithmic}
						\Require
						$A\in \mathbb{R}^{m\times n}$, $b\in \mathbb{R}^m$, probability spaces $\{(\Omega_{k}, \mathcal{F}_{k}, P_{k})\}_{k \geq 1}$, $k=1$, and initial points $z^{0} \in \text{Range}(A^{\top})$, $\xi^{0} \in \mathbb{R}^{m}$. Set $(h^{0},q^0,\delta_0)=(z^{0},0,1)$, $(h^{1},q^1,\delta_1)=\left(z^{0}+2A^{\top}\xi^{0},-2A^{\top}\xi^{0},\frac{1}{2}\right)$, $\delta^*_{0}=1$, $\theta_{-1}=\theta_{0}=\frac{1}{2}$, $\beta_0=0$, $l_{1}=\|q^{1}\|_{2}^{2}$, and $\tau_{1}=-2\langle\xi^0, b\rangle$.
						\begin{enumerate}
							\item[1:] Randomly select a sampling matrix $S_{k}\in\Omega_k$.
							
							\item[2:] Compute $d_{1}^{k}=(S_{k}^{\top}A)\nabla f^{*}(h^{k}+\delta_{k} q^{k})-S_{k}^{\top}b$ and $d^{k}=(A^\top S_{k}) d_{1}^{k}$.
							
							\item[3:]  If
							$\|d^k\|^2_2 l_{k}-\langle d^k,q^{k}\rangle^2=0$ or $ \|d^{k}\|_{2}^{2}(\langle q^{k}, \nabla f^{*}(h^{k}+\delta_{k} q^{k})\rangle-\tau_{k})-\langle d^{k},q^{k} \rangle \|d_{1}^{k}\|_{2}^{2}=0$
							
							\quad Update $h^{k+1}$, $q^{k+1}$, $\delta_{k+1}$, $\delta_{k}^*$, $\theta_{k}$, $l_{k+1}$, and $\tau_{k+1}$ by Stage I.
							
							Otherwise,
							
							\quad Update $h^{k+1}$, $q^{k+1}$, $\delta_{k+1}$, $\delta_{k}^*$, $\theta_{k}$, $l_{k+1}$, and $\tau_{k+1}$ by Stage II.
							\item[4:] If the stopping rule is satisfied, stop and go to output. Otherwise, set $k=k+1$ and go to Step $1$.
						\end{enumerate}
						
						\Ensure
						The approximate solution $\nabla f^{*}(h^{k+1}+\delta_{k+1} q^{k+1})$.
					\end{algorithmic}
				\end{algorithm}

				\begin{table}[htpb]
					\centering
					\begin{tabular}{  |l|  }
						\hline
						\qquad \qquad \qquad \qquad \qquad \qquad \quad \qquad \textbf{Stage I} \qquad \qquad \qquad \qquad \qquad \qquad \quad  \qquad \\
						1: Set $\beta_{k}=0$, $\theta_{k}=\frac{1}{2}$, and $\delta^*_{k}=1$. \\
						2: Compute   \;  $z^{k}=h^{k}+ \delta_{k} q^{k}$ and $\alpha_{k}=\frac{\gamma \|d_{1}^{k}\|^2_2}{\|d^k\|^2_2}$.\\
						3: Update \;\;
						$
						(h^{k+1},q^{k+1}, \delta_{k+1})=\left(z^{k}-2\alpha_{k}d^{k},2\alpha_{k}d^{k}, \frac{1}{2}\right) 
						$ and \\[1.5mm]
						\qquad\qquad\qquad\quad\;\;\;\;
						$
						\left(l_{k+1}, \tau_{k+1}\right)=\left(4\alpha_{k}^{2}\|d^{k}\|_{2}^{2}, 2\alpha_{k} \langle d_{1}^{k}, S_{k}^{\top}b\rangle \right).
						$ \\[1.5mm]
						\hline
					\end{tabular}
					
				\end{table}
				\begin{table}[htpb]
					\centering
					\begin{tabular}{  |l|  }
						\hline
						\qquad \qquad \qquad \qquad \qquad \qquad \quad \qquad \textbf{Stage II} \qquad \qquad \qquad \qquad \qquad \qquad \quad  \qquad\\[1.7mm]
						1: Compute
						$
						\alpha_{k}=\gamma \frac{\|d_{1}^{k}\|^2_2l_{k}-\langle d^{k}, q^{k} \rangle (\langle q^{k}, \nabla f^{*}(h^{k}+\delta_{k} q^{k}) \rangle-\tau_{k})}{\|d^k\|^2_2l_{k}-\langle d^k,q^{k}\rangle^2}
						$ and \\
						\qquad \qquad\quad\; $
						\beta_{k}=\frac{\gamma}{\theta_{k-1} \delta^*_{k-1}} \frac{\|d^{k}\|_{2}^{2}(\langle q^{k}, \nabla f^{*}(h^{k}+\delta_{k} q^{k} \rangle-\tau_{k}) - \langle d^{k},q^{k} \rangle \|d_{1}^{k}\|_{2}^{2}}{ \|d^k\|^2_2l_{k}-\langle d^k,q^{k}\rangle^2}.
						$ \\
						2: If $\theta_{k-1} \neq 1$ \\
						
						\qquad\quad\; Compute   $\delta^*_{k}=\delta_{k}$ and \\
						\qquad\qquad \qquad \; \; \; $
						\theta_{k} =
						\left\{\begin{array}{ll}
							\frac{\theta_{k-1}}{1-\theta_{k-1}} \beta_{k} \quad\;  \text{if} \; \theta_{k-2} \neq 1 \; \text{or} \; \beta_{k-1} = 0;
							\\
							- \beta_{k} \qquad\quad\;  \text{otherwise}.
						\end{array}
						\right.
						$ \\[2.4mm]
						\qquad\quad\;\; Update
						$
						(h^{k+1}, q^{k+1},\delta_{k+1})=
						\left(h^{k}-\frac{\alpha_{k}}{\theta_k} d^{k},  q^{k}+\frac{\alpha_{k}}{\delta^*_{k} \theta_{k}} d^{k},  (1-\theta_{k})\delta^*_{k}\right).
						$ \\
						\qquad\;\; Otherwise \\
						
						\qquad \quad\quad Set $\delta^*_{k}=2 \delta^*_{k-1} \beta_{k}$ and $\theta_{k}=\frac{1}{2}$. \\
						\qquad\quad\;\;\, Update $
						(h^{k+1},q^{k+1},\delta_{k+1})=
						\left(h^{k}, q^{k}+\frac{\alpha_{k}}{\delta^*_{k} \theta_{k}} d^{k},  -\theta_{k} \delta^*_{k}\right).
						$\\[1.5mm]
						3: Update
						$
						(l_{k+1}, \tau_{k+1})=\left(l_{k}+2\frac{\alpha_k}{\delta^*_{k}\theta_{k}} \langle d^{k},q^{k} \rangle+\frac{\alpha_k^{2}}{(\delta^*_{k})^{2}\theta_{k}^{2}} \|d^{k}\|_{2}^{2}, \tau_{k}+\frac{\alpha_{k}}{\delta^*_{k} \theta_{k}} \langle d_{1}^{k}, S_{k}^{\top}b \rangle \right).
						$ \\[1.5mm]
						\hline
					\end{tabular}
				\end{table}
				
			Since the equivalence between Algorithms \ref{amRDCD} and \ref{amRDCD_EI} is not immediately obvious, we formally state it as the following result. The detailed proof is provided in the Appendix \ref{secA3}.
				
				\begin{prop} \label{Theo}
					Suppose that Algorithms  \ref{amRDCD} and \ref{amRDCD_EI} share the same sampling matrices \(\{S_k\}_{k\geq1}\) and initial points $z^{0}$ and $\xi^0$. Then, for any $k \geq 0$,  
					$$
					z^{k}=h^{k}+\delta_{k}q^{k}.
					$$
					That is, Algorithms  \ref{amRDCD} and \ref{amRDCD_EI} are equivalent.
				\end{prop}
				In Algorithm~\ref{amRDCD_EI}, explicit computation of \( z^{k} \) is unnecessary except when \( \beta_{k} = 0 \). Instead, two auxiliary vectors \( h^{k} \) and \( q^{k} \), along with a scalar parameter \(\delta_{k} \), are introduced to represent \( z^{k} \) via the decomposition \( z^{k} = h^{k} + \delta_{k} q^{k} \). The algorithm makes use of this representation through evaluating the term \((S_{k}^{\top}A)\nabla f^{*}(h^{k} + \delta_{k} q^{k})\) to determine the update direction, and the inner product \( \langle q^{k}, \nabla f^{*}(h^{k} + \delta_{k} q^{k}) \rangle \) to compute the step size parameters \( \alpha_{k} \) and \( \beta_{k} \).
				If \( \nabla f^{*}(h^{k} +\delta_{k} q^{k}) \) can be evaluated efficiently without explicitly forming \( z^k \), and given that \( A \) is sparse, then full-dimensional operations can be avoided when computing both \((S_k^\top A)\nabla f^{*}(h^k + \delta_k q^k)\) and \(\nabla f^{*}(h^k + \delta_k q^k)\). For examples of functions that admit such efficient computation, we refer the reader to \cite[Section~5]{fercoq2015accelerated}.  Indeed, when \( A \) is sparse, the vector \( d^{k} \) may also exhibit sparsity. Hence, both Stage~I and Stage~II of the algorithm can be carried out using sparse vector operations, thereby avoiding costly full-dimensional computations and making each iteration computationally efficient.
				
				Moreover, if the optimal solution \( \widehat{x} \) is sparse, and if \( \nabla f^{*}(h^{k} + \delta_{k} q^{k}) \) is close to \( \widehat{x} \), then the cost of computing the inner product \( \langle q^{k}, \nabla f^{*}(h^{k} + \delta_{k} q^{k}) \rangle \) may be significantly reduced.
				In particular, when the objective function takes the form \( f(x) = \frac{\gamma}{2}\|x - u\|_2^2 - v \), we have \( \langle z^{k} - z^{k-1}, \nabla f^{*}(z^{k}) - \widehat{x} \rangle =0\). Since the term \( \langle q^{k}, \nabla f^{*}(h^{k} + \delta_{k} q^{k}) \rangle \) is only introduced for computing \( \langle z^{k} - z^{k-1}, \nabla f^{*}(z^{k}) - \widehat{x} \rangle \), it becomes unnecessary to evaluate \( \langle q^{k}, \nabla f^{*}(h^{k} + \delta_{k} q^{k}) \rangle \) in this specific case.
				We present this specialized version of Algorithm~\ref{amRDCD_EI} as Algorithm~\ref{amRDCD_EI_Q}.

					\begin{center}
						\begin{minipage}{\linewidth}
							\begin{algorithm}[H]
								\caption{An efficient implementation of ASDCD for $f(x)=\frac{\gamma}{2}\|x-u\|_{2}^{2}-v$ \label{amRDCD_EI_Q}}
								\begin{algorithmic}
								\Require
												$A\in \mathbb{R}^{m\times n}$, $b\in \mathbb{R}^m$, probability spaces $\{(\Omega_{k}, \mathcal{F}_{k}, P_{k})\}_{k \geq 1}$, $k=1$, and initial points $z^{0} \in \text{Range}(A^{\top})$, $\xi^{0} \in \mathbb{R}^{m}$. Set $(h^{0},q^0,\delta_0)=(z^{0},0,1)$, $(h^{1},q^1,\delta_1)=\left(z^{0}+2A^{\top}\xi^{0},-2A^{\top}\xi^{0},\frac{1}{2}\right)$, $\delta^*_{0}=1$, $\theta_{-1}=\theta_{0}=\frac{1}{2}$, $\beta_0=0$, and $l_{1}=\|q^{1}\|_{2}^{2}$.
											\begin{enumerate}
												\item[1:] Randomly select a sampling matrix $S_{k}\in\Omega_k$.
												
												\item[2:] Compute $d_{1}^{k}=\frac{1}{\gamma}(S_{k}^{\top}A)h^{k}+\frac{\delta_{k}}{\gamma}(S_{k}^{\top}A)q^{k}+(S_{k}^{\top}A)u-S_{k}^{\top}b$ and $d^{k}=(A^\top S_{k}) d_{1}^{k}$.
												
												\item[3:]  If
												$\|d^k\|^2_2 l_{k}-\langle d^k,q^{k}\rangle^2=0$ or $\langle d^{k},q^{k} \rangle=0$
												
												\quad Update $h^{k+1}$, $q^{k+1}$, $\delta_{k+1}$, $\delta_{k}^*$, $\theta_{k}$, $l_{k+1}$, and $\tau_{k+1}$ by Stage I.
												
												Otherwise,
												
												\quad Compute  $\alpha_{k}=\gamma \frac{\|d_{1}^{k}\|^2_2l_{k}}{\|d^k\|^2_2l_{k}-\langle d^k,q^{k}\rangle^2}$ and
												$
												\beta_{k}=-\frac{\gamma}{\theta_{k-1} \delta^*_{k-1}} \frac{\langle d^{k},q^{k} \rangle \|d_{1}^{k}\|_{2}^{2}}{\|d^k\|^2_2l_{k}-\langle d^k,q^{k}\rangle^2}.
												$								
												
												\quad Update $h^{k+1}$, $q^{k+1}$, $\delta_{k+1}$, $\delta_{k}^*$, $\theta_{k}$, $l_{k+1}$, and $\tau_{k+1}$ using Steps 2-3 in Stage II.
												
												\item[4:] If the stopping rule is satisfied, stop and go to output. Otherwise, set $k=k+1$ and go to Step $1$.
											\end{enumerate}
											
											\Ensure
											The approximate solution $\frac{h^{k+1}+\delta_{k+1}q^{k+1}}{\gamma}+u$.
								\end{algorithmic}
							\end{algorithm}
						\end{minipage}
			\end{center}

Finally, we note that alternative methods exist to mitigate the computational burden of full-dimensional vector operations from the momentum term. One such approach is the stochastic momentum technique introduced in \cite{loizou2020momentum}, where we can modify the update of \( z^{k+1} \) (Step 4 in Algorithm \ref{amRDCD}) to
\[
 z^{k+1} = z^k - \alpha_k d^k + \beta_k e_{i_k} e_{i_k}^\top (z^k - z^{k-1}),
 \]
where the index \( i_k \in [m] \) is sampled uniformly at random. Another relevant method is ProxSkip \cite{mishchenko2022proxskip}, which reduces computational complexity by probabilistically skipping the proximal operator. Inspired by this, one could consider computing the momentum term with a probability \( p \in (0,1] \), thereby reducing its evaluation frequency to once every \( 1/p \) iterations on average.

	\section{Numerical experiments}
	
	In this section, we report some numerical results that demonstrate the efficiency of the adaptive stochastic dual coordinate descent (ASDCD) method. Specifically, we will compare the performance of the methods for solving the following problem
	\begin{equation}\label{ell1-ell2}
		\min \mu\|x\|_1+\frac{1}{2}\|x\|^2_2 \ \ \text{subject to} \ \ Ax=b,
	\end{equation}
	which is a regularized version of the basis pursuit \cite{cai2009linearized,yin2008bregman,yin2010analysis}.
	
For the underlying sampling strategy, we adopt the row partition scheme discussed in Remark~\ref{remark-sampling}, which is formally described in equation~\eqref{partition-sample}. In this set of experiments, we do not consider uniform sampling. Although partition sampling and uniform sampling have the same computational cost per iteration, we observe that partition sampling consistently outperforms uniform sampling in terms of total CPU time.
	This performance gap is attributed to additional overhead incurred by uniform sampling during each iteration. Specifically, uniform sampling requires dynamically extracting rows from the matrix \(A\), which results in increased data movement and memory access latency. In contrast, partition sampling avoids this overhead by storing fixed submatrices of \(A\) in memory according to a predefined partition. This pre-processing step eliminates the need for repeated row extractions and enables more efficient access during the optimization process. Related discussions can be found in \cite{xie2025randomized,zeng2024adaptive}.

	For the SDCD method, we set $z^0=0$, and for the ASDCD method, we set $\xi^0=0$ and $ z^0=0$.
	
	All the methods are implemented in  {\sc Matlab} R2022a for Windows $11$ on a desktop PC with Intel(R) Core(TM) i7-1360P CPU @ 2.20GHz  and 32 GB memory. The code to reproduce our results can be found at  \href{https://github.com/xiejx-math/ASDCD}{https://github.com/xiejx-math/ASDCD}.
	
	\subsection{Choice of $\tau$}
	\label{section5-1}
	
	In this experiment, we utilize  Gaussian matrices, Bernoulli random matrices, and randomly subsampled Hardmard matrices as sensing matrices $A$.
	We should mention that these matrices are well acknowledged to be efficient for sparse signal recovery in compressed sensing and have been widely used for numerical tests. To generate the $s$-sparse (the number of nonzero entries of a certain vector is less than or equal to $s$) vector $x \in \mathbb{R}^n$,
	we first sample a random vector $\hat{\lambda} \sim \mathcal{N}(0, I_n)$ from the standard normal distribution. We then compute $\hat{x} = S_\mu(A^\top \hat{\lambda})$, where $\mu$  is chosen as the $(s+1)$st largest absolute value among the entries of $A^\top \hat{\lambda}$. Afterward, we let $b = A\hat{x}$.
		Note that $(\hat{x}, \hat{\lambda})$ form a primal-dual pair for problem \eqref{main-prob}, satisfying
		$
		A\hat{x} = b$ and $\hat{x} = \nabla f^*(A^\top \hat{\lambda}),
		$
		which indicates that the constructed $\hat{x}$ is indeed an optimal solution. We apply the widely used stopping criterion that the relative solution error (RSE)
		$\frac{\|x^k-\hat{x}\|^2_2}{\|\hat{x}\|^2_2}\leq10^{-12}$.
	
	

	Figures \ref{figue1}, \ref{figue2}, and \ref{figue3} illustrate the evolution of  the number of epochs and computational time (CPU) with respect to the block size $\tau$ for the SDCD and ASDCD methods. The bold line represents the median computed over $10$ independent runs. The lightly shaded area indicates the range between the minimum and maximum values, while the darker shaded region corresponds to the interquartile range, from the $25$th to the $75$th percentile. It can be observed that the ASDCD method consistently outperforms the SDCD method when $\tau < m$. In particular, for small values of $\tau$ (e.g., $\tau = 1, 2, 4$), ASDCD is approximately ten times faster than SDCD. When $\tau = m$, both ASDCD and SDCD reduce to dual full gradient methods and exhibit similar performance.
		This empirical behavior can be explained by the fact that the momentum parameter $\beta_k$ in ASDCD tends toward zero as $\tau$ approaches $m$. Indeed, from \eqref{adap-s-m}, we have
		\[
		\begin{aligned}
			\beta_{k} &= \frac{\langle A^{\top}(Ax^{k}-b), z^{k}-z^{k-1} \rangle \|Ax^{k}-b\|_{2}^{2} - \langle x^{k}-\hat{x}, z^{k}-z^{k-1} \rangle \|A^{\top}(Ax^{k}-b)\|_{2}^{2}}{\|A^{\top}(Ax^{k}-b)\|_{2}^{2} \|z^{k}-z^{k-1}\|_{2}^{2} - \langle A^{\top}(Ax^{k}-b), z^{k}-z^{k-1} \rangle^{2}} \\
			&= \frac{\langle x^{k}-\hat{x}, z^{k}-z^{k-1} \rangle_{A^\top A} \|x^{k}-\hat{x}\|_{A^\top A}^{2} - \langle x^{k}-\hat{x}, z^{k}-z^{k-1} \rangle \|x^{k}-\hat{x}\|_{(A^\top A)^2}^{2}}{\|A^{\top}(Ax^{k}-b)\|_{2}^{2} \|z^{k}-z^{k-1}\|_{2}^{2} - \langle A^{\top}(Ax^{k}-b), z^{k}-z^{k-1} \rangle^{2}},
		\end{aligned}
		\]
		which tends to zero as $A^\top A$ is almost a scalar matrix \cite[Theorem $4.6.1$]{vershynin2018high}. For the SDCD method, it can be observed that a larger $\tau$ leads to an increase in the number of epochs, yet a decrease in the total CPU time. This observation aligns with the analysis provided in Remark \ref{remark-sampling}. The underlying reason is that {\sc Matlab} engages automatic multithreading when computing matrix-vector products, which form the computational bottleneck in block sampling-based methods.
	
	\begin{figure}[hptb]
		\centering
		\begin{tabular}{cc}
			\includegraphics[width=0.4\linewidth]{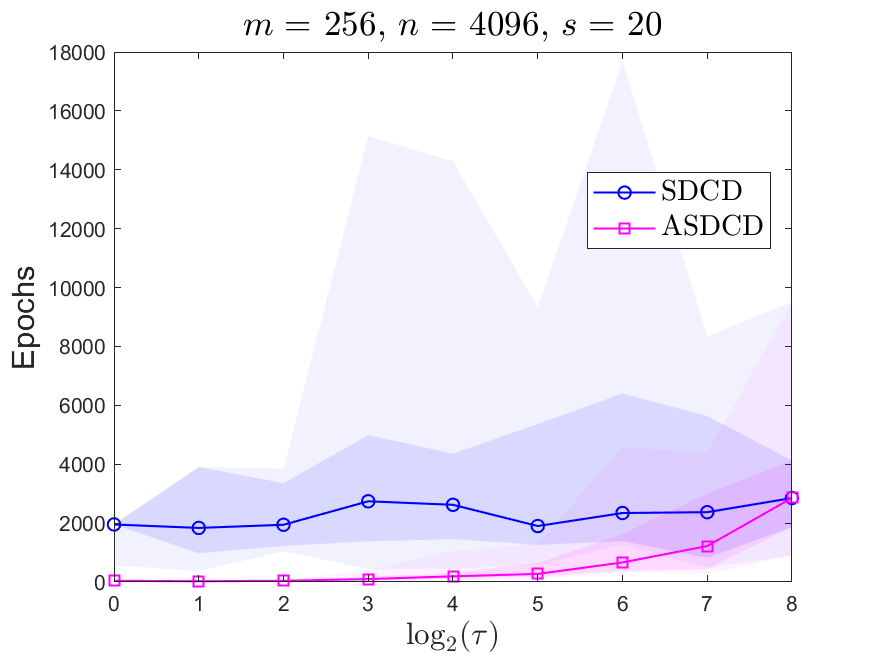}
			\includegraphics[width=0.4\linewidth]{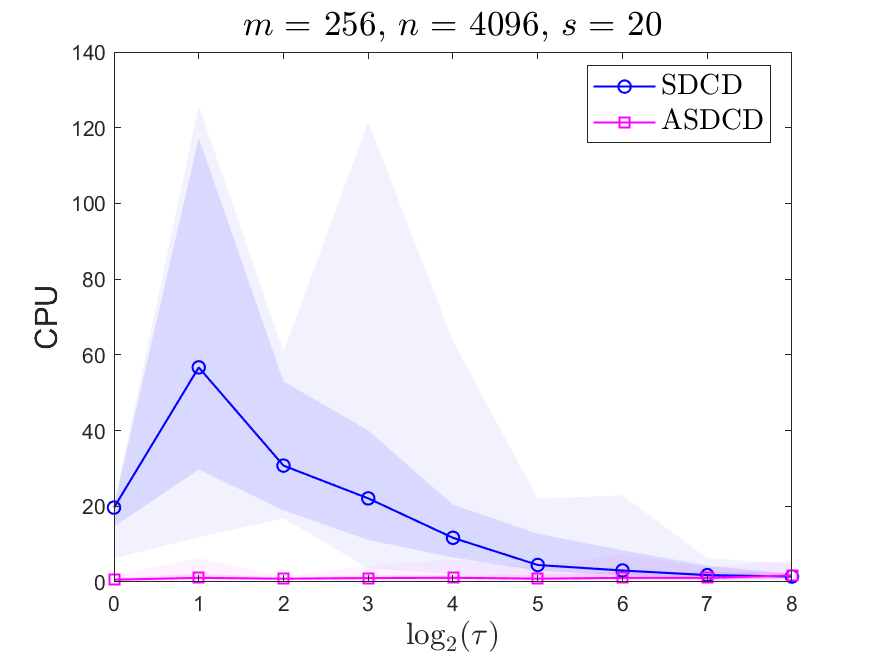}
		\end{tabular}
		\caption{Figures depict the evolution of the number of epochs and computational time (CPU)  with respect to the block size $\tau$  for  Gaussian matrices.  The title of each plot indicates the values of $m,n,s$. }
		\label{figue1}
	\end{figure}

	\begin{figure}[hptb]
		\centering
		\begin{tabular}{cc}
			\includegraphics[width=0.4\linewidth]{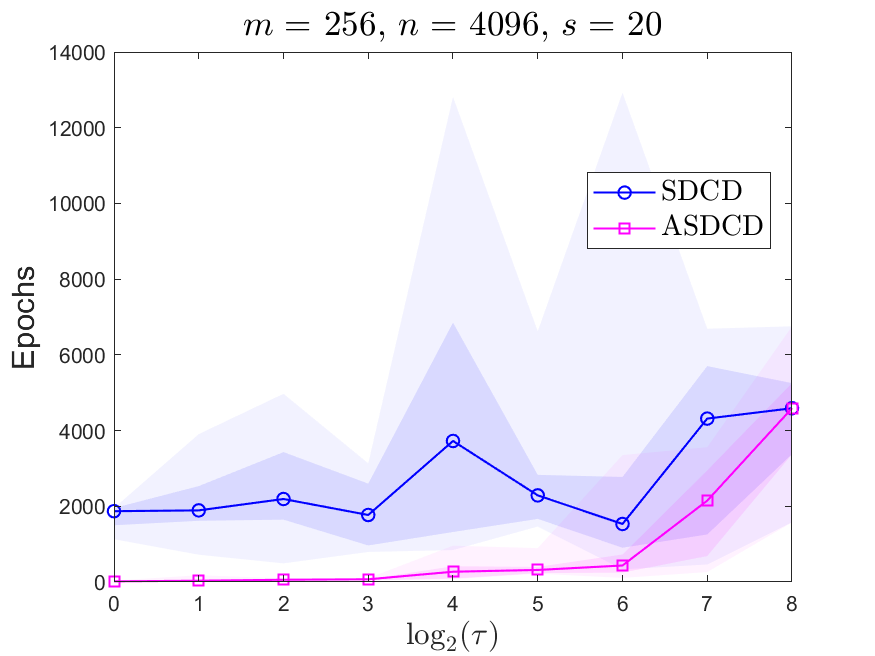}
			\includegraphics[width=0.4\linewidth]{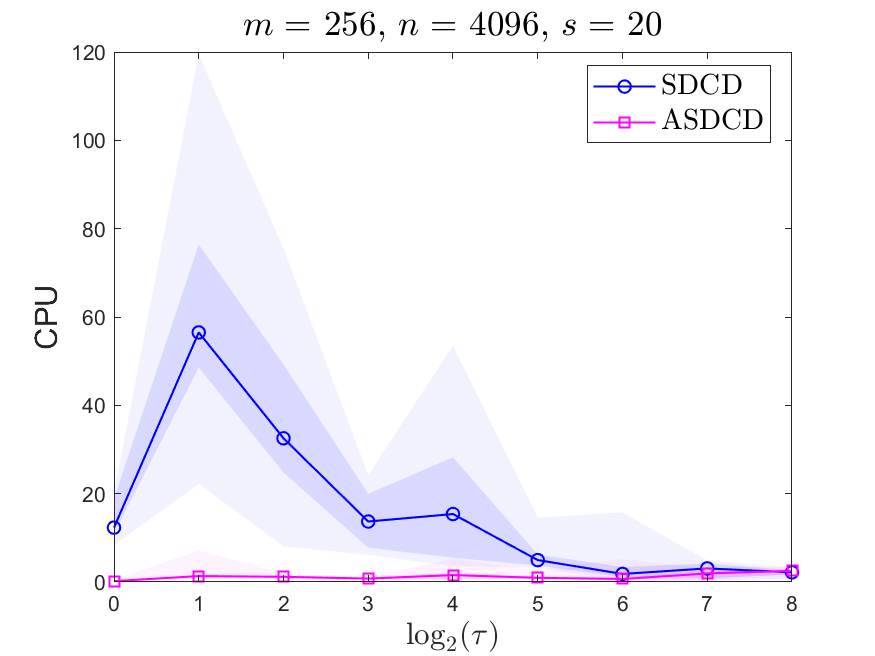}
		\end{tabular}
		\caption{Figures depict the evolution of the number of epochs and computational time (CPU) with respect to the block size $\tau$  for  Bernoulli random matrices.  The title of each plot indicates the values of $m,n,s$. }
		\label{figue2}
	\end{figure}
	
	\begin{figure}[hptb]
		\centering
		\begin{tabular}{cc}
			\includegraphics[width=0.4\linewidth]{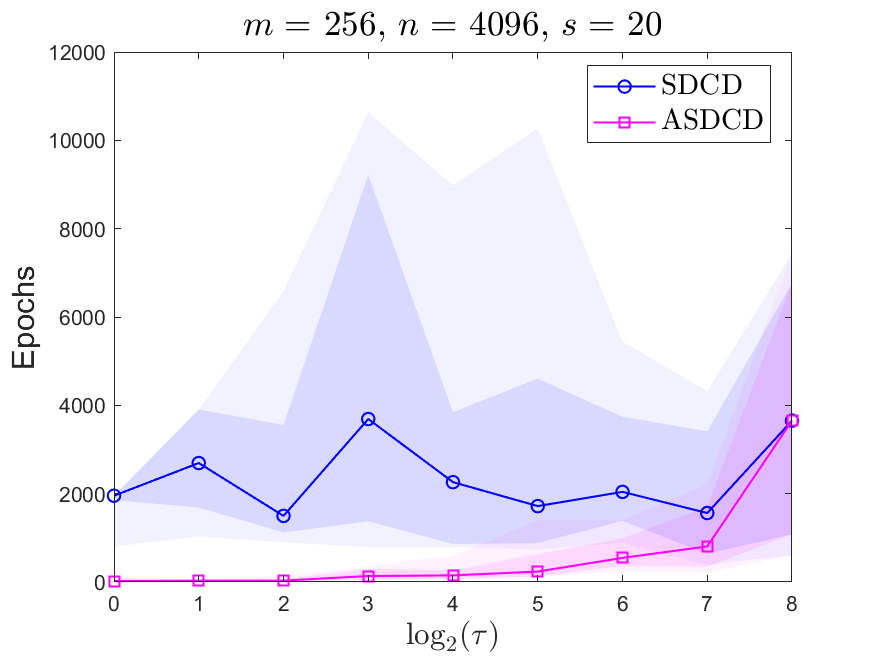}
			\includegraphics[width=0.4\linewidth]{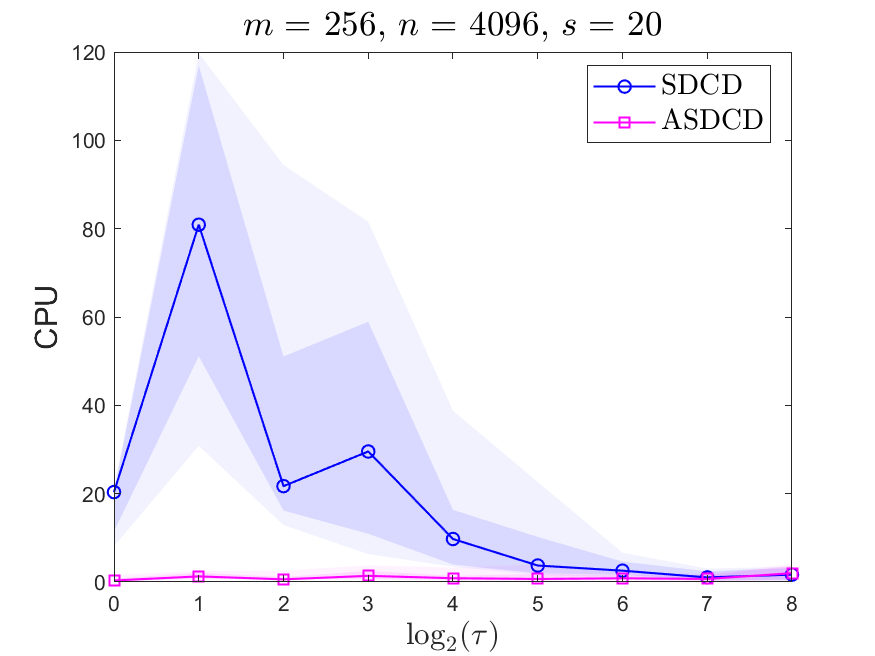}
		\end{tabular}
		\caption{Figures depict the evolution of the number of epochs and computational time (CPU)  with respect to the block size $\tau$  for  randomly subsampled Hardmard matrices.  The title of each plot indicates the values of $m,n,s$. }
		\label{figue3}
	\end{figure}

	\subsection{Comparison to the existing methods}
	We compare the performance of  the following methods for solving \eqref{ell1-ell2}: (1) alternating direction method of multipliers (ADMM) \cite{boyd2011distributed,han2022survey,yang2011alternating}; (2) linearized Bregman iteration \cite{cai2009linearized,cai2009convergence} (denoted by LB); (3) Nesterov accelerated linearized Bregman iteration \cite{huang2013accelerated} (denoted by ALB);  (4) our proposed methods (SDCD and ASDCD).  In particular,  we use the following iteration strategy  adopted from \cite[Remark 1]{yang2011alternating} for the ADMM method
	$$
	\begin{aligned}
		x^{k+1}&=S_{{\tilde{\mu}}/\beta }\left(x-\tau A^\top(Ax^k-b-y^k/\beta)\right),\\
		y^{k+1}&=y^k- \gamma\beta (Ax^{k+1}-b),
	\end{aligned}
	$$
	where $\beta>0$ is  a penalty parameter  and $\tilde{\mu},\gamma>0$ satisfy $\tilde{\mu}\|A\|^2_2+\gamma<2$. In our test, we set $\tilde{\mu}=\frac{1}{\|A\|_2^2},\gamma=0.99$, and $\beta=0.01$.
	The ALB method has the following iteration
	$$
	\begin{aligned}
		x^{k+1}&=S_{\mu }(\tilde{z}^k),\\
		z^{k+1}&=\tilde{z}^k- \alpha A^\top(Ax^{k+1}-b),\\
		\tilde{z}^{k+1}&=t_kz^{k+1}+(1-t_k)z^k,
	\end{aligned}
	$$
	where $\alpha=\frac{2}{\|A\|_2^2}$ and $t_{k-1}=1+\theta_k(\theta_{k-1}^{-1}-1)$ with $\theta_{-1}=1$ and $\theta_k=\frac{2}{k+2}$ for $k\geq0$; See \cite[Theorem 3.3]{huang2013accelerated} for more details. For the ADMM method, we set $x^0=0$ and $y^0=0$, and for the ALB method, we set $\tilde{z}^0=0$.
	
	Figures \ref{figue4}, \ref{figue5}, and \ref{figue6} compare the performance of ADMM, LB, ALB, SDCD, and ASDCD under different sensing matrices. In terms of epochs, ASDCD consistently outperforms all other methods across all matrix types. In terms of actual CPU time, however, ASDCD and ALB perform comparably, though both are more efficient than ADMM, LB, and SDCD. This divergence between epoch count and computational time arises because MATLAB leverages multithreading to accelerate matrix-vector products, which is the dominant cost in ADMM, LB, and ALB. While these methods require more iterations, they benefit from parallel computation, reducing their wall-clock time. Conversely, the epoch-efficient ASDCD derives less advantage from this low-level optimization, resulting in a relatively higher CPU time.

	\begin{figure}[hptb]
		\centering
		\begin{tabular}{cc}
			\includegraphics[width=0.4\linewidth]{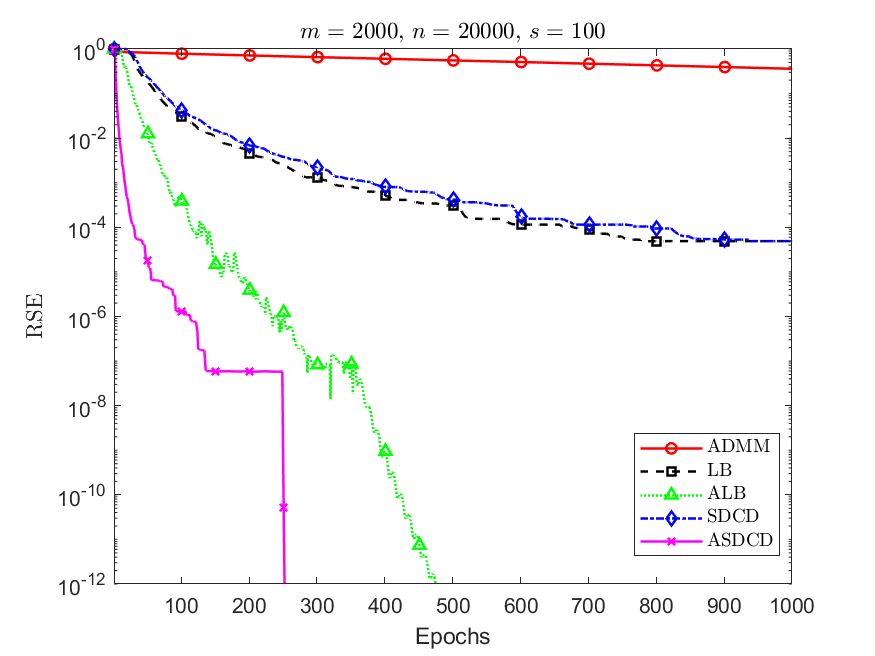}
			\includegraphics[width=0.4\linewidth]{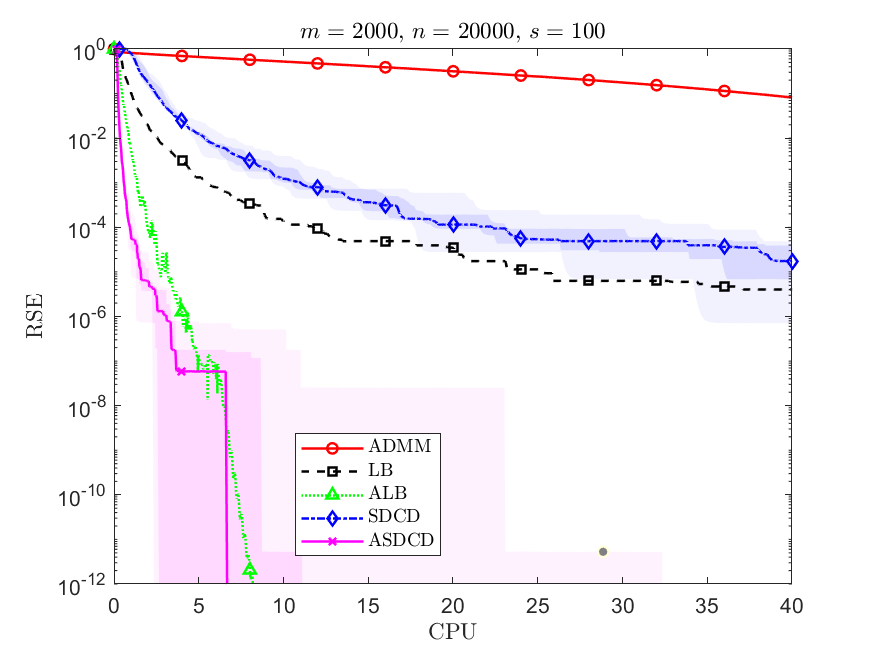}
		\end{tabular}
		\caption{The decrease of RSE across epochs and CPU time for ADMM, LB, ALB, SDCD, and ASDCD with Gaussian matrices. We set $\tau=100$. The title of each plot indicates the values of $m,n,s$. }
		\label{figue4}
	\end{figure}

	\begin{figure}[hptb]
		\centering
		\begin{tabular}{cc}
			\includegraphics[width=0.4\linewidth]{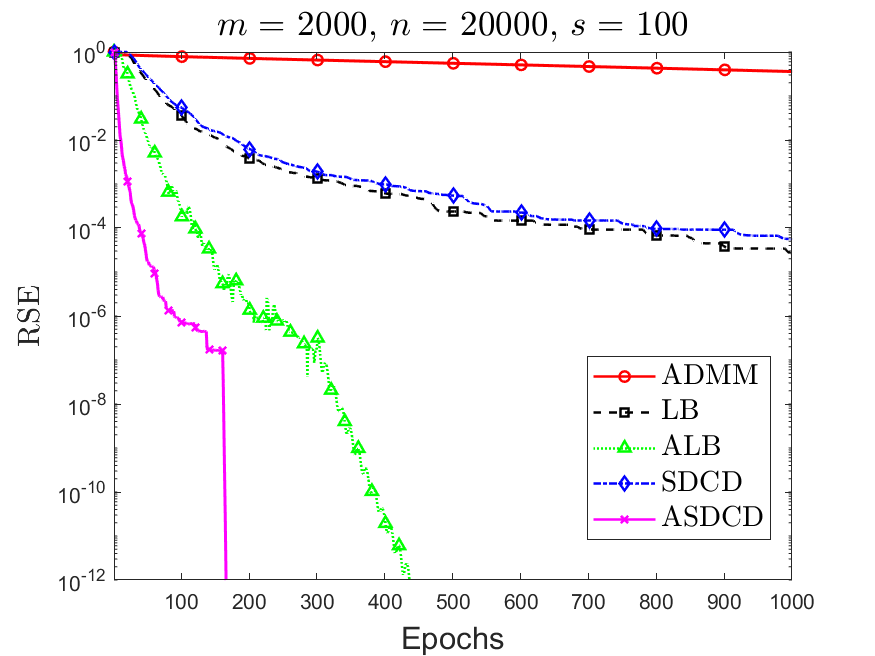}
			\includegraphics[width=0.4\linewidth]{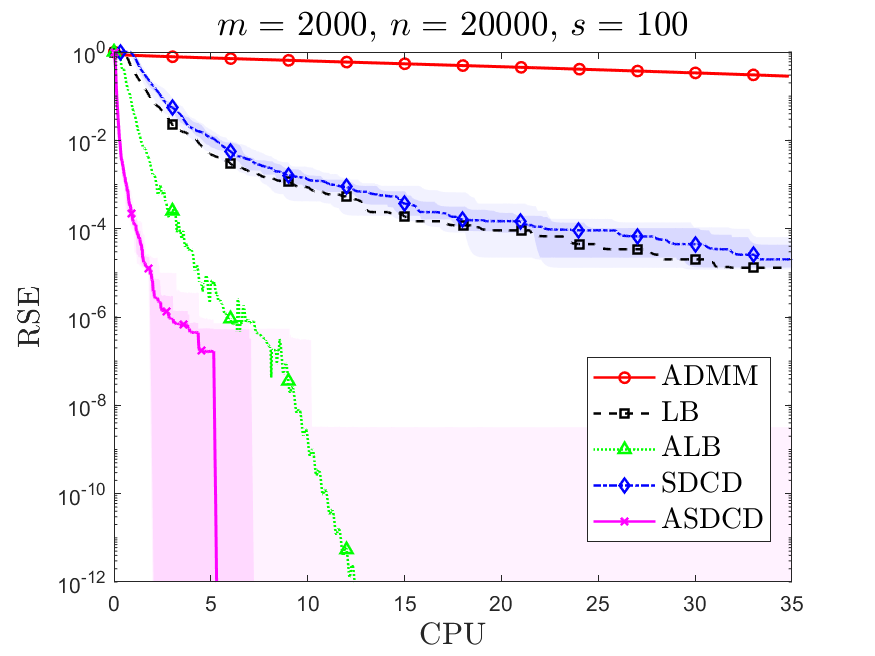}
		\end{tabular}
		\caption{The decrease of RSE across epochs and CPU time for ADMM, LB, ALB, SDCD, and ASDCD with Bernoulli random matrices. We set $\tau=50$. The title of each plot indicates the values of $m,n,s$. }
		\label{figue5}
	\end{figure}
	
	\begin{figure}[hptb]
		\centering
		\begin{tabular}{cc}
			\includegraphics[width=0.4\linewidth]{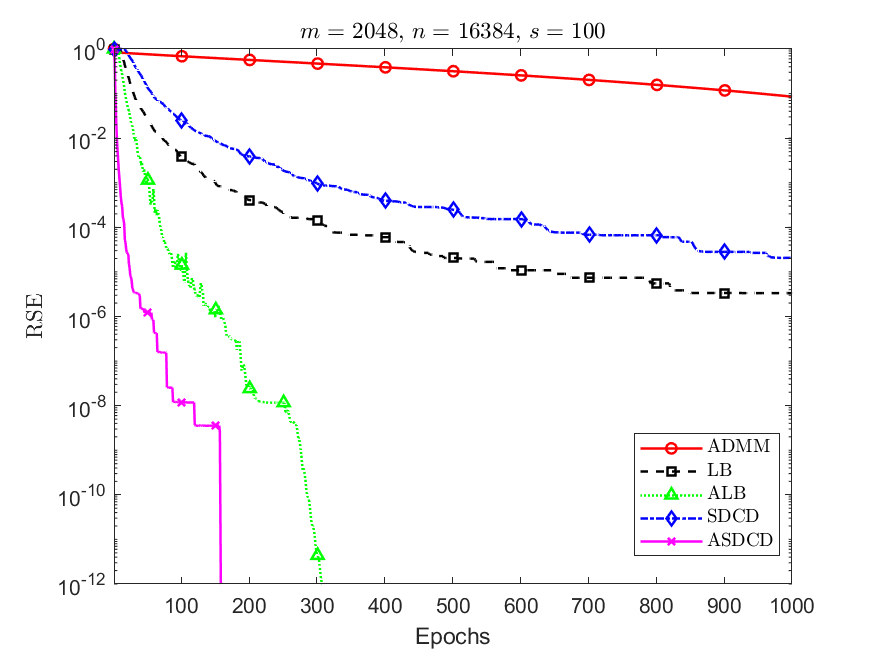}
			\includegraphics[width=0.4\linewidth]{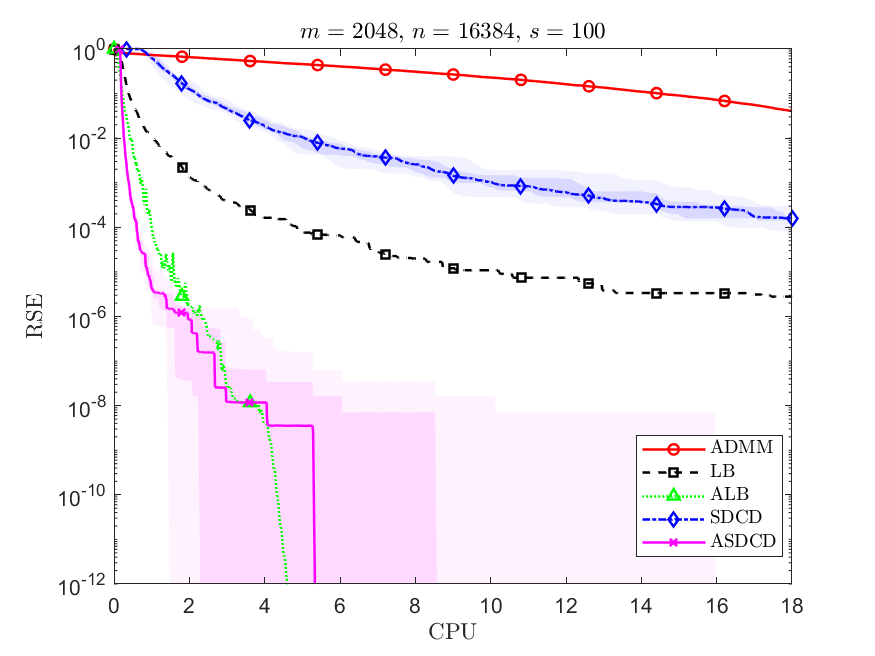}
		\end{tabular}
		\caption{The decrease of RSE across epochs and CPU time for ADMM, LB, ALB, SDCD, and ASDCD with randomly subsampled Hadamard matrices. We set $\tau=50$. The title of each plot indicates the values of $m,n,s$. }
		\label{figue6}
	\end{figure}
	

	\section{Concluding remarks}
	
	This paper proposed an adaptive stochastic dual coordinate descent algorithmic framework, ASDCD, for minimizing a strongly convex objective function subject to linear constraints.  In particular, we incorporated the heavy ball momentum into our framework and proposed a novel strategy for adaptively learning the parameters $\alpha_k$ and $\beta_k$ using iteration information.
	If the objective function $f(x)=\frac{\gamma}{2}\|x-u\|^2_2-v$,  then the deterministic version of our method is serendipitously equivalent to  the conjugate gradient normal equation error (CGNE) method. We  discussed the extension and the geometric interpretation of our approach.
	Additionally, we have established that the ASDCD method can be reformulated into a computationally equivalent algorithm which, in certain cases, largely avoids the full-dimensional vector operations introduced by the momentum term.
	 Numerical results confirmed the efficiency of the ASDCD method.
	
	There are still many possible future avenues of research. The linearized Bregman method via split feasibility problems has been investigated in \cite{lorenz2014linearized},  which should be a valuable topic to explore the extensions of the adaptive heavy ball momentum approach for solving the general split feasibility problems. Recently, the Bregman-Kaczmarz method for solving nonlinear systems of equations was studied in \cite{gower2023bregman}. The convenience of extending our methods to nonlinear systems of equations would be a promising avenue for future research.  The stochastic heavy ball momentum has been studied in \cite{loizou2020momentum}, and it is also a valuable topic to investigate the stochastic coordinate descent with adaptive stochastic heavy ball momentum for minimizing the general $L$-smooth convex functions.  Furthermore, one can adopt the backtracking rule \cite{beck2009fast} to learn the parameter $L$.


%
%
%

 
\bibliography{sn-bibliography}

%
%
%
%
%

\appendix
	
	\section{Proof of the main results}
\subsection{Proof of Theorem \ref{thm-rdcd}}
\label{secA1}
	Recall that the set $\mathcal{Q}_k$ is defined as 
\begin{equation}
	\label{xie-Qk}
	\mathcal{Q}_k=\{S\in \Omega_k \mid S^\top (Ax^k-b)\neq 0\},
\end{equation}
which represents the set of sampling matrices for which Algorithm \ref{RDCD} effectively executes one step such that $x^{k+1}\neq x^k$. Obviously,  $\{\mathcal{Q}_k, \mathcal{Q}_k^c\}$ forms a partition of $\Omega_k$. Given that $S_{k} \in \mathcal{Q}$, we denote
	$$
	\mathbb{E}_{k, S_{k} \in \mathcal{Q}} [\cdot]:=\mathbb{E}[\cdot | \mathcal{B}_{k}, S_{k} \in \mathcal{Q}].
	$$
	Note that for random variables $X$ and $Y$, if $X$ is measurable with respect to the $\sigma$-algebra generated by $\mathcal{B}_k$, which is denoted by $\sigma \langle \mathcal{B}_k \rangle$, we have the following equations holds \cite[Proposition 12.1.5 (ii)]{athreya2006measure}
	\begin{equation}
		\label{e:measure}
		\mathbb{E} [X|\mathcal{B}_k] = X \quad \text{and} \quad \mathbb{E} [XY|\mathcal{B}_k] = X  \mathbb{E} [Y|\mathcal{B}_k].
	\end{equation} 
	Since $z^k$ and $x^k$ are determined only by the elements in the sequence $(S_0, \cdots,S_{k-1})$, they are measurable with respect to $\sigma\langle \mathcal{B}_k \rangle$.

	\begin{proof}[Proof of Theorem \ref{thm-rdcd}] 
	Letting $\mathcal{Q}_k$ be defined as \eqref{xie-Qk} and supposing  the sample matrix $S_k \in \mathcal{Q}_k$, then we have
	\begin{equation}\label{xie-prf-0503}
		\begin{aligned}
			&D_{f,z^{k+1}}(x^{k+1},\widehat{x})=f(\widehat{x})+f^*(z^{k+1})-\langle z^{k+1},\widehat{x}\rangle\\
			\leq& f(\widehat{x})+f^*(z^{k})-\left\langle x^k,\alpha_k A^\top S_{k} S_{k}^\top(Ax^k-b) \right\rangle
			+\frac{1}{2\gamma}\left\|\alpha_k A^\top S_{k} S_{k}^\top(Ax^k-b)\right\|^2_2\\
			&-
			\left\langle z^k-\alpha_k A^\top S_{k} S_{k}^\top(Ax^k-b),\widehat{x}\right\rangle\\
			=&
			D_{f,z^{k}}(x^{k},\widehat{x})-\alpha_k \left\langle x^k-\widehat{x}, A^\top S_{k} S_{k}^\top(Ax^k-b) \right\rangle
			+\frac{\alpha_k^2}{2\gamma}\left\|A^\top S_{k} S_{k}^\top(Ax^k-b)\right\|^2_2
			\\
			=&D_{f,z^{k}}(x^{k},\widehat{x})-\frac{\zeta(2-\zeta) L_{\text{adap}}^{k,\gamma} }{2} \left\|S_{k}^\top(Ax^k-b)\right\|^2_2,
	\end{aligned}\end{equation}
	where the first inequality follows from the $ \frac{1}{\gamma} $-smoothness of $ f^* $.
	Thus
	{\begin{equation}
				\label{Exp}
				\begin{aligned}
					&\mathbb{E}_{k }\left[D_{f,z^{k+1}}(x^{k+1},\widehat{x})\right] \\
					=& \mathbb{P}(S_k \in \mathcal{Q}_k) \mathbb{E}_{k, S_k \in \mathcal{Q}_k}\left[D_{f,z^{k+1}}(x^{k+1},\widehat{x})\right]+\mathbb{P}(S_k \in \mathcal{Q}_k^{c}) \mathbb{E}_{k, S_k \in \mathcal{Q}_k^{c}} \left[D_{f,z^{k+1}}(x^{k+1},\widehat{x})\right] \\
					\leq& \mathbb{P}(S_k \in \mathcal{Q}_k) \mathbb{E}_{k, S_k \in \mathcal{Q}_k}\left[D_{f,z^{k}}(x^{k},\widehat{x})-\frac{\zeta(2-\zeta) L_{\text{adap}}^{k,\gamma} }{2} \left\|S_{k}^\top(Ax^k-b)\right\|^2_2\right]
					\\
					&+\mathbb{P}(S_k \in \mathcal{Q}_k^{c}) \mathbb{E}_{k, S_k \in \mathcal{Q}_k^{c}} \left[D_{f,z^{k}}(x^{k},\widehat{x})\right] \\
					=&\mathbb{E}_{k}\left[D_{f,z^{k}}(x^{k},\widehat{x})\right]-\frac{\zeta(2-\zeta)}{2} \mathbb{P}(S_k \in \mathcal{Q}_k) \mathbb{E}_{k, S_k \in \mathcal{Q}_{k}}\left[L_{\text{adap}}^{k,\gamma} \|S_k^\top (Ax^k-b)\|_2^2\right]\\
					=&D_{f,z^{k}}(x^{k},\widehat{x})-\frac{\zeta(2-\zeta)}{2} \mathbb{P}(S_k \in \mathcal{Q}_k) \mathbb{E}_{k, S_k \in \mathcal{Q}_{k}}\left[L_{\text{adap}}^{k,\gamma} \|S_k^\top (Ax^k-b)\|_2^2\right],				\end{aligned}
	\end{equation}}
	where the inequality follows from \eqref{xie-prf-0503} and the fact that if $S_k \in \mathcal{Q}_k^c$, then $z^{k+1}=z^k$ and $x^{k+1}=x^k$,
	and the last equality follows from the fact that $z^{k}$ and $x^{k}$ are measurable with respect to $\sigma\langle \mathcal{B}_k \rangle$ and \eqref{e:measure}.
	
	We consider the case where $\Omega_k$ is bounded. If $S_k \in \mathcal{Q}_k$, then we have
	$$				L_{\text{adap}}^{k,\gamma}=\frac{\gamma\|S_k^\top (Ax^k -b)\|_2^2}{\|A^\top S_k S_k^\top (Ax^k-b)\|_2^2}\geq \frac{\gamma}{\lambda_{\max}(A^\top S_kS_k^\top A)}\geq \frac{\gamma}{\lambda_{\max}^{(k)}}.$$
	Substitute it into \eqref{Exp},  we can get
			\[
					\mathbb{E}_{k }\left[D_{f,z^{k+1}}(x^{k+1},\widehat{x})\right] 
					\leq
					D_{f,z^{k}}(x^{k},\widehat{x})-\frac{\gamma\zeta(2-\zeta) }{2\lambda_{\max}^{(k)}} \mathbb{P}(S_k \in \mathcal{Q}_k) \mathbb{E}_{k, S_k \in \mathcal{Q}_{k}}\left[\|S_k^\top (Ax^k-b)\|_2^2\right].
				\]
			Besides, note that $\mathbb{E}_{k, S_k \in \mathcal{Q}_{k}^{c}}\left[\|S_k^\top (Ax^k-b)\|_2^2\right]=0$ as $S_k^\top (Ax^k-b)=0$ for $S_{k} \in \mathcal{Q}_{k}^{c}$, we have
			\begin{equation} \nonumber
				\begin{aligned}
					&\mathbb{P}(S_k \in \mathcal{Q}_k) \mathbb{E}_{k, S_k \in \mathcal{Q}_{k}}\left[\|S_k^\top (Ax^k-b)\|_2^2\right] \\
					=&
					\mathbb{P}(S_k \in \mathcal{Q}_k) \mathbb{E}_{k, S_k \in \mathcal{Q}_{k}}\left[\|S_k^\top (Ax^k-b)\|_2^2\right]+\mathbb{P}(S_k \in \mathcal{Q}_{k}^{c}) \mathbb{E}_{k, S_k \in \mathcal{Q}_{k}^{c}}\left[\|S_k^\top (Ax^k-b)\|_2^2\right] \\
					=& \mathbb{E}_{k, S_k \in \Omega_{k}}\left[\|S_k^\top (Ax^k-b)\|_2^2\right].
				\end{aligned}
			\end{equation}
			Therefore,
	\begin{equation}\label{proof-xie0503-1}
		\begin{aligned}
			{\mathbb{E}_{k }\left[D_{f,z^{k+1}}(x^{k+1},\widehat{x})\right]}
			&{\leq
					D_{f,z^{k}}(x^{k},\widehat{x})-\frac{\gamma\zeta(2-\zeta) }{2\lambda_{\max}^{(k)}} \mathbb{E}_{k}\left[\left\| S_{k}^\top(Ax^k-b)\right\|^2_2\right]}\\
			&= D_{f,z^{k}}(x^{k},\widehat{x})-\frac{\gamma\zeta(2-\zeta) }{2\lambda_{\max}^{(k)}} \left\|(Ax^k-b)\right\|_{H_{k}}^{2}
			\\
			&\leq\left(1-\frac{\gamma\zeta(2-\zeta)\nu\lambda_{\min}\left(H_k\right) }{2\lambda_{\max}^{(k)}} \right)D_{f,z^{k}}(x^{k},\widehat{x}),
		\end{aligned}
	\end{equation}
	where
	the first equality follows from the fact that $x^{k}$ is measurable with respect to $\sigma\langle \mathcal{B}_k \rangle$ and \eqref{e:measure}, and the last inequality follows from that $H_k=\mathbb{E}_{S \in \Omega_k}[SS^\top]$ is positive definite.
	
	Next, we consider  the case where $\Omega_k$ is unbounded. If $S_k \in \mathcal{Q}_k$, we have
	$$	L_{\text{adap}}^{k,\gamma}=\frac{\gamma\|S_k^\top (Ax^k-b)\|_2^2}{\|A^\top S_k S_k^\top (Ax^k-b)\|_2^2}\geq \frac{\gamma}{\lambda_{\max}\left(\frac{A^\top S_kS_k^\top A}{\|S_k\|_2^2}\right)} \frac{1}{\|S_k\|_2^2}\geq \frac{\gamma}{\lambda_{\max}^{(k)} \|S_k\|_2^2}.$$
Substitute it into \eqref{Exp} and use the similar arguments  to those  in \eqref{proof-xie0503-1}, we can get
$$
\begin{aligned}
	{\mathbb{E}_{k}\left[D_{f,z^{k+1}}(x^{k+1},\widehat{x})\right]}
	& {\leq D_{f,z^{k}}(x^{k},\widehat{x})-\frac{\gamma\zeta(2-\zeta) }{2\lambda_{\max}^{(k)}} \mathbb{E}_{k}\left[\frac{\left\| S_{k}^\top(Ax^k-b)\right\|^2_2}{\|S_k\|^2_2}\right]}\\
	&= D_{f,z^{k}}(x^{k},\widehat{x})-\frac{\gamma\zeta(2-\zeta) }{2\lambda_{\max}^{(k)}} \left\|(Ax^k-b)\right\|_{H_{k}}^{2}
	\\
	&\leq\left(1-\frac{\gamma\zeta(2-\zeta)\nu\lambda_{\min}\left(H_k\right) }{2\lambda_{\max}^{(k)}} \right)D_{f,z^{k}}(x^{k},\widehat{x}).
\end{aligned}$$
By taking the full expectation on both sides, we have
		$$
		\mathbb{E}\left[D_{f,z^{k}}(x^{k},\widehat{x})\right] \leq D_{f,z^{0}}(x^{0},\widehat{x})\prod \limits_{i=0}^{k-1} \left(1-\frac{\gamma\zeta(2-\zeta)\nu\lambda_{\min}\left(H_{i}\right) }{2\lambda_{\max}^{(i)}} \right).
		$$
		Furthermore, combining it with the inequality $D_{f,z^{k}}(x^{k},\widehat{x}) \geq \frac{\gamma}{2}\|x^{k}-\widehat{x}\|_{2}^{2}$, we can get
		$$
		\mathop{\mathbb{E}} \left[\| x^{k}-\hat{x}\|_2^2\right] \leq \frac{2 D_{f,z^{0}}(x^{0},\widehat{x})}{\gamma} \prod \limits_{i=0}^{k-1} \left(1-\frac{\gamma\zeta(2-\zeta)\nu\lambda_{\min}\left(H_{i}\right) }{2\lambda_{\max}^{(i)}} \right).
		$$
This completes the proof of this theorem.
\end{proof}

\subsection{Proof of Theorem \ref{thm-mrdcd}}
\label{secA2}

The following lemma is essential for proving Theorem \ref{thm-mrdcd}.
	\begin{lemma}
	\label{lemma-xie-202510}
	Let $\{x^k\}_{k\geq1}$ be the sequences of iterates generated by Algorithm \ref{amRDCD}. Then
	\[
	\left\|w^{k+1} -x^k\right\|^2_2= \frac{L_{\text{adap}}^{k,\gamma}}{\gamma} \left\|S_k^{\top}(Ax^k-b)\right\|_2^2+\cos^2\theta_k \left\|y^{k+1}-\widehat{x}\right\|_2^2,
	\]
	where $L_{\mathrm{adap}}^{k,\gamma}$, $w^{k+1}$, $y^{k+1}$, and $\theta_k$ be given by \eqref{Lk}, \eqref{def-wk}, \eqref{y}, and \eqref{def-delta}, respectively. 
\end{lemma}
\begin{proof}
	Recall that $u^k = \langle d^k, z^k - z^{k-1} \rangle d^k - \| d^k \|_2^2 (z^k - z^{k-1})$. We define a candidate point as
	$$
	\widetilde{w}^{k+1}:=y^{k+1}-\frac{\langle y^{k+1}-\widehat{x}, u^k \rangle}{\| u^k \|_2^2}u^{k}.
	$$
	Given that $y^{k+1}=x^k-\frac{L_{\text{adap}}^{k,\gamma}}{\gamma}d^k$, we have
	\[ 
		\begin{aligned}
			\langle y^{k+1} -x^k,\widetilde{w}^{k+1} -y^{k+1}\rangle&=\left\langle -\frac{L_{\text{adap}}^{k,\gamma}}{\gamma}d^k, -\frac{\langle y^{k+1}-\widehat{x}, u^k \rangle}{\| u^k \|_2^2}u^{k}\right\rangle\\
			&=\frac{L_{\text{adap}}^{k,\gamma}\langle y^{k+1}-\widehat{x}, u^k \rangle}{\gamma \| u^k \|_2^2}\langle d^k, u^{k} \rangle\\
			&=0.
		\end{aligned}
\]
Thus, it follows that
	\[\begin{aligned}
		\left\|\widetilde{w}^{k+1} -x^k\right\|^2_2=& \left\|y^{k+1} -x^k\right\|^2_2+\left\|\widetilde{w}^{k+1} -y^{k+1}\right\|^2_2\\
		=&\frac{L_{\text{adap}}^{k,\gamma}}{\gamma} \left\|S_k^{\top}(Ax^k-b)\right\|_2^2+\frac{\langle y^{k+1}-\widehat{x}, u^k \rangle^2}{\|u^k\|_2^2} \\
		=& \frac{L_{\text{adap}}^{k,\gamma}}{\gamma} \left\|S_k^{\top}(Ax^k-b)\right\|_2^2+\cos^2\theta_k \left\|y^{k+1}-\widehat{x}\right\|_2^2,
	\end{aligned}
	\]
where the second equality follows from the definitions of \( y^{k+1} \) and \( \widetilde{w}^{k+1} \), and the third equality follows from the definition of \( \theta_k \).
Our goal is now to show that \( \widetilde{w}^{k+1} = w^{k+1} \). Note that \( w^{k+1} \) is defined as the unique projection of \( \widehat{x} \) onto the affine subspace \( \widetilde{\Pi}_k = x^k + \text{Span}\{d^k, z^k - z^{k-1}\} \). It then suffices to prove that \( \widetilde{w}^{k+1} \) is indeed this projection. One can verify that 
	\begin{equation}
		\label{proof-xie-18-1}
	\widetilde{w}^{k+1}\in \widetilde{\Pi}_k, \langle \widetilde{w}^{k+1}-\widehat{x},d^k\rangle=0, \ \text{and}\ \langle \widetilde{w}^{k+1}-\widehat{x},u^k\rangle=0.
		\end{equation}
	 Noting that $\text{Span}\{d^k, z^k - z^{k-1}\}=\text{Span}\{d^k, u^k\}$, we have
	  \[\widetilde{\Pi}_k = x^k + \text{Span}\{d^k, u^k\}. \] 
	The  conditions in \eqref{proof-xie-18-1} therefore imply that \( \widetilde{w}^{k+1} \) is the orthogonal projection of \( \widehat{x} \) onto \( \widetilde{\Pi}_k \), which completes the proof.
\end{proof}

Now we are ready to prove Theorem \ref{thm-mrdcd}.

	\begin{proof}[Proof of Theorem \ref{thm-mrdcd}] 
	Consider the case where $\|d^k\|^2_2\|z^k-z^{k-1}\|^2_2-\langle d^k,z^k-z^{k-1}\rangle^2\neq0$,
	from \eqref{breg-dist}, \eqref{MT-xie} and the definition of $w^{k+1}$, we know that
	\[
		\begin{aligned}
			D_{f,z^{k+1}}(x^{k+1},\widehat{x})=&f(\widehat{x})+f^*(z^{k+1})-\langle z^{k+1},\widehat{x}\rangle\\
			\leq& f(\widehat{x})+f^*(z^k)-\langle z^k,\widehat{x}\rangle+\frac{1}{2\gamma}\left\|\alpha_k d^k-\beta_k(z^k-z^{k-1})\right\|^2_2
			\\&- \left\langle x^k-\widehat{x},\alpha_k d^k -\beta_k(z^k-z^{k-1}) \right\rangle\\
			=& D_{f,z^{k}}(x^{k},\widehat{x})+\frac{\gamma}{2} \left[\left\|w^{k+1} -\widehat{x}\right\|^2_2-\left\|x^k-\widehat{x}\right\|^2_2\right]\\
			=& D_{f,z^{k}}(x^{k},\widehat{x})-\frac{\gamma}{2} \left\|w^{k+1} -x^k\right\|^2_2,
		\end{aligned}
\]
	where the last equality follows from the fact that $w^{k+1}$ is the orthogonal projection of $\hat{x}$ onto the affine set $\widetilde{\Pi}_{k}=x^k+\text{Span}\{d^k, z^k-z^{k-1}\}$, which implies $\langle w^{k+1}-\hat{x},w^{k+1}-x^{k}\rangle=0$.
	From Lemma \ref{lemma-xie-202510}, we can get 
	$$
	D_{f,z^{k+1}}(x^{k+1},\widehat{x}) \leq D_{f,z^{k}}(x^{k},\widehat{x})-\frac{L_{\text{adap}}^{k,\gamma}}{2} \left\|S_k^{\top}(Ax^k-b)\right\|_2^2-\frac{\gamma \cos^2\theta_k }{2}  \left\|y^{k+1}-\widehat{x}\right\|_2^2.
	$$
	Consider the case where $\|d^k\|^2_2\|z^k-z^{k-1}\|^2_2-\langle d^k,z^k-z^{k-1}\rangle^2=0$, we have $u^k=0$ and hence $\cos^2\theta_k =0$. Thus, from Theorem \ref{thm-rdcd}, we can obtain the same inequality. Then, using the similar arguments  as that in the proof of Theorem \ref{thm-rdcd}, we can get this theorem.
\end{proof}

\subsection{Proof of Proposition \ref{Theo}}
\label{secA3}

To prove Proposition \ref{Theo}, we first introduce two key lemmas.  
Let the parameters \(\{\overline{\alpha}_{k}, \overline{\beta}_{k}, \overline{S}_{k}\}_{k \geq 1}\) be given. Consider the following iteration scheme
	\begin{equation} \label{iteration-I}
		\begin{cases}
			\overline{d}^{k}&=A^\top \overline{S}_{k}\overline{S}_{k}^\top (A\nabla f^{*}(\overline{z}^{k})-b), \\
			\overline{z}^{k+1}&=
			\overline{z}^{k}-\overline{\alpha}_{k} \overline{d}^{k}+\overline{\beta}_{k}(\overline{z}^{k}-\overline{z}^{k-1}).
		\end{cases}
	\end{equation}
The initial points are chosen as \(\overline{z}^{0} \in \text{Range}(A^{\top})\) and \(\overline{\xi}^0 \in \mathbb{R}^m\), with \(\overline{z}^{1} = \overline{z}^{0} + A^\top \overline{\xi}^0\).

Next, given parameters \(\{\widehat{\alpha}_{k}, \widehat{\beta}_{k}, \widehat{S}_{k}\}_{k \geq 1}\), consider the following iteration scheme
\begin{equation}\label{iteration-II} 
	\begin{cases}
		\widehat{d}^{k}=A^\top \widehat{S}_{k}\widehat{S}_{k}^\top (A\nabla f^{*}(\widehat{z}^{k})-b), \\ 
		\text{If} \; \widehat{\beta}_{k}=0, \; \text{then}\; \widehat{\theta}_{k}=\frac{1}{2}, 
		\widehat{z}^{k+1}=\widehat{z}^{k}-\widehat{\alpha}_{k}\widehat{d}^{k},\; \text{and}\;
		\widehat{h}^{k+1}=\widehat{z}^{k}-2\widehat{\alpha}_{k} \widehat{d}^{k}.
		\\
		\text{If} \; \widehat{\beta}_{k}\neq0 \; \text{and} \; \widehat{\theta}_{k-1} \neq 1,  \; \text{then} \;
		\widehat{\theta}_{k}=
		\begin{cases}
			\frac{\widehat{\theta}_{k-1}}{1-\widehat{\theta}_{k-1}} \widehat{\beta}_{k} \, & \text{if} \; \widehat{\theta}_{k-2} \neq 1 \; \text{or} \; \widehat{\beta}_{k-1}=0; \\
			-\widehat{\beta}_{k} \qquad\, & \text{otherwise},
		\end{cases}\\
		\qquad \qquad \qquad \qquad \qquad \qquad \;  \;
		\widehat{z}^{k+1}=
		(1-\widehat{\theta}_{k})\widehat{z}^k+\widehat{\theta}_{k} \widehat{h}^{k}-\widehat{\alpha}_{k} \widehat{d}^{k}, \\
		\qquad \qquad \qquad \qquad \qquad \qquad \; \;
		\widehat{h}^{k+1}=
		\widehat{h}^{k}-\frac{\widehat{\alpha}_{k}}{\widehat{\theta}_k} \widehat{d}^{k}.\\
		\text{If} \; \widehat{\beta}_{k}\neq0 \; \text{and} \; \widehat{\theta}_{k-1} = 1,  \; \text{then} \;
		\widehat{\theta}_{k}=\frac{1}{2}, \\
		\qquad \qquad \qquad \qquad \qquad \qquad \; \;
		\widehat{z}^{k+1}=-\widehat{\beta}_{k}\widehat{z}^{k-1}+(1+\widehat{\beta}_{k})\widehat{h}^{k}-\widehat{\alpha}_{k} \widehat{d}^{k}, \\
		\qquad \qquad \qquad \qquad \qquad \qquad \; \;
		\widehat{h}^{k+1}=\widehat{h}^{k}.
	\end{cases}
\end{equation}
The initial conditions are \(\widehat{z}^{0} \in \text{Range}(A^{\top})\), \(\widehat{\xi}^{0} \in \mathbb{R}^{m}\), \(\widehat{z}^{1}=\widehat{z}^{0}+A^\top \widehat{\xi}^{0}\), \(\widehat{h}^{1}=\widehat{z}^{0}+2A^\top \widehat{\xi}^{0}\), \(\widehat{\theta}_{-1}=\widehat{\theta}_{0}=\frac{1}{2}\), and \(\widehat{\beta}_{0}=0\).
The recurrence for \(\widehat{\theta}_{k}\) ensures \(\widehat{\theta}_{k} \neq 0\) for all \(k \geq 1\), which guarantees that the vector \(\widehat{h}^{k}\) is well-defined throughout the iteration.

The following lemma establishes the equivalence between the iteration schemes \eqref{iteration-I} and \eqref{iteration-II}.
\begin{lemma} \label{Theo4}
	 Suppose that $\overline{z}^{0} = \widehat{z}^{0}$, $\overline{\xi}^{0} = \widehat{\xi}^{0}$, and $(\overline{\alpha}_{k}, \overline{\beta}_{k}, \overline{S}_{k}) = (\widehat{\alpha}_{k}, \widehat{\beta}_{k}, \widehat{S}_{k})$ for all $k \geq 1$.
	Then the sequences $\{\overline{z}^{k}\}_{k \geq 0}$ and $\{\widehat{z}^{k}\}_{k \geq 0}$, generated by \eqref{iteration-I} and \eqref{iteration-II} respectively, satisfy $\overline{z}^{k} = \widehat{z}^{k}$ for all $k \geq 0$.
\end{lemma}
		\begin{proof}
			Since $\overline{z}^{0}=\widehat{z}^{0}$ and $\overline{\xi}^{0}=\widehat{\xi}^{0}$,  it follows directly that
			$
			\widehat{z}^{1}=\widehat{z}^{0}+A^\top \widehat{\xi}^{0}=\overline{z}^{0}+A^\top \overline{\xi}^{0}=\overline{z}^{1}.
			$
			We now consider the update for $\widehat{z}^{2}$. Since $\widehat{\theta}_{0}=\frac{1}{2} \neq 1$, the recurrence proceeds based on the value of $\widehat{\beta}_{1}$. If $\widehat{\beta}_{1}=0$, then
			$$
			\widehat{z}^{2}=\widehat{z}^{1}-\widehat{\alpha}_{1}\widehat{d}^{1}=\widehat{z}^{1}-\widehat{\alpha}_{1}\widehat{d}^{1}+\widehat{\beta}_{1}(\overline{z}^{1}-\overline{z}^{0})=\overline{z}^{1}-\overline{\alpha}_{1} \overline{d}^{1}+\overline{\beta}_{1}(\overline{z}^{1}-\overline{z}^{0})=\overline{z}^{2}.
			$$
		 If $\widehat{\beta}_{1} \neq 0$, then $\widehat{\theta}_{1}=\frac{\widehat{\theta}_{0}}{1-\widehat{\theta}_{0}} \widehat{\beta}_{1}=\widehat{\beta}_{1}$ and
			\[
				\begin{aligned}
					\widehat{z}^{2}&=(1-\widehat{\theta}_{1})\widehat{z}^{1}+\widehat{\theta}_{1} \widehat{h}^{1}-\widehat{\alpha}_{1} \widehat{d}^{1} 
					=(1-\widehat{\beta}_{1})\widehat{z}^{1}+\widehat{\beta}_{1}(\widehat{z}^{0}+2A^\top \widehat{\xi}^{0})-\widehat{\alpha}_{1} \widehat{d}^{1} \\
					&=\widehat{z}^{1}-\widehat{\alpha}_{1} \widehat{d}^{1}+\widehat{\beta}_{1} (\widehat{z}^{0}+2A^\top \widehat{\xi}^{0}-\widehat{z}^{1}) 
					=\widehat{z}^{1}-\widehat{\alpha}_{1} \widehat{d}^{1}+\widehat{\beta}_{1} (\widehat{z}^{0}+2(\widehat{z}^{1}-\widehat{z}^{0})-\widehat{z}^{1}) \\
					&=\widehat{z}^{1}-\widehat{\alpha}_{1} \widehat{d}^{1}+\widehat{\beta}_{1} (\widehat{z}^{1}-\widehat{z}^{0}) 
					=\overline{z}^{1}-\overline{\alpha}_{1} \overline{d}^{1}+\overline{\beta}_{1} (\overline{z}^{1}-\overline{z}^{0}) 
					=\overline{z}^{2}.
				\end{aligned}
		\]
		 Consequently, we have $\widehat{z}^{2}=\overline{z}^{2}$ in both cases. Having established the base cases, we now proceed by induction. Assume that $\widehat{z}^{j} = \overline{z}^{j}$ holds for all $j \leq k$ and some $k \geq 2$. To complete the induction, we prove $\widehat{z}^{k+1} = \overline{z}^{k+1}$ by considering the values of $\widehat{\beta}_{k}$ and $\widehat{\theta}_{k-1}$.
		
		{\bf Case 1.} If $\widehat{\beta}_{k}=0$, then
		 $$
		 \widehat{z}^{k+1}=\widehat{z}^{k}-\widehat{\alpha}_{k}\widehat{d}^{k}=\widehat{z}^{k}-\widehat{\alpha}_{k}\widehat{d}^{k}+\widehat{\beta}_{k}(\widehat{z}^{k}-\widehat{z}^{k-1})=\overline{z}^{k}-\overline{\alpha}_{k} \overline{d}^{k}+\overline{\beta}_{k}(\overline{z}^{k}-\overline{z}^{k-1})=\overline{z}^{k+1}.
		 $$
		
		 {\bf Case 2.} If $\widehat{\beta}_{k} \neq 0$ and $\widehat{\theta}_{k-1} \neq 1$, then
		 $$
		 \widehat{z}^{k+1}=(1-\widehat{\theta}_{k})\widehat{z}^k+\widehat{\theta}_{k} \widehat{h}^{k}-\widehat{\alpha}_{k} \widehat{d}^{k}=\overline{z}^{k}-\overline{\alpha}_{k} \overline{d}^{k}+\widehat{\theta}_{k}(\widehat{h}^{k}-\overline{z}^{k}).
		 $$
		 Thus, it suffices to show that $\widehat{\theta}_{k}(\widehat{h}^{k}-\overline{z}^{k})=\overline{\beta}_{k}(\overline{z}^{k}-\overline{z}^{k-1})$. We verify this equality by examining the following subcases.
		
		{\bf Subcase 2.1.} If $\widehat{\beta}_{k} \neq 0$, $\widehat{\theta}_{k-1} \neq 1$, $\widehat{\beta}_{k-1} \neq 0$, and $\widehat{\theta}_{k-2} \neq 1$, then
		 \[
		 \begin{aligned}
			 \widehat{\theta}_{k}(\widehat{h}^{k}-\overline{z}^{k})&=\frac{\widehat{\theta}_{k-1}}{1-\widehat{\theta}_{k-1}} \widehat{\beta}_{k} \left(\widehat{h}^{k-1}-\frac{\widehat{\alpha}_{k-1}}{\widehat{\theta}_{k-1}}\widehat{d}^{k-1}-\overline{z}^{k}\right) \\
			 &=\frac{1}{1-\widehat{\theta}_{k-1}} \widehat{\beta}_{k} (\widehat{\theta}_{k-1}\widehat{h}^{k-1}-\widehat{\alpha}_{k-1} \widehat{d}^{k-1}-\widehat{\theta}_{k-1} \overline{z}^{k}) \\
			 &=\frac{1}{1-\widehat{\theta}_{k-1}} \widehat{\beta}_{k} (\overline{z}^{k}-(1-\widehat{\theta}_{k-1})\overline{z}^{k-1}-\widehat{\theta}_{k-1} \overline{z}^{k}) 
			 =\overline{\beta}_{k} (\overline{z}^{k}-\overline{z}^{k-1}),
			 \end{aligned}
		 \]
		 where the third equality follows from the inductive hypothesis, which gives $\overline{z}^{k}=(1-\widehat{\theta}_{k-1}) \overline{z}^{k-1}+\widehat{\theta}_{k-1} \widehat{h}^{k-1}-\widehat{\alpha}_{k-1} \widehat{d} ^{k-1}$ under the conditions $\widehat{\beta}_{k-1} \neq 0$ and $\widehat{\theta}_{k-2} \neq 1$.
		
		{\bf Subcase 2.2.} If $\widehat{\beta}_{k} \neq 0$, $\widehat{\theta}_{k-1} \neq 1$, $\widehat{\beta}_{k-1} \neq 0$, and $\widehat{\theta}_{k-2}=1$, then $\widehat{\theta}_{k-2}=1$ implies $\widehat{\beta}_{k-2} \neq 0$ and $\widehat{\theta}_{k-3} \neq 1$. Thus,
		 $$
		 \widehat{z}^{k-1}=(1-\widehat{\theta}_{k-2}) \widehat{z}^{k-2}+\widehat{\theta}_{k-2} \widehat{h}^{k-2}-\widehat{\alpha}_{k-2} \widehat{d}^{k-2}=\widehat{h}^{k-2}-\widehat{\alpha}_{k-2} \widehat{d}^{k-2}=\widehat{h}^{k-1}.
		 $$
		 Furthermore, we obtain
		 $$
		 \widehat{\theta}_{k}(\widehat{h}^{k}-\overline{z}^{k})=-\widehat{\beta}_{k} (\widehat{h}^{k-1}-\overline{z}^{k})=\overline{\beta}_{k}(\overline{z}^{k}-\overline{z}^{k-1}).
		 $$
		
		{\bf Subcase 2.3.} If $\widehat{\beta}_{k} \neq 0$, $\widehat{\theta}_{k-1} \neq 1$, and $\widehat{\beta}_{k-1} = 0$, then $\widehat{\beta}_{k-1} = 0$ implies $\widehat{\theta}_{k-1}=\frac{1}{2}$. Hence,
		 \[
		 \begin{aligned}
			 \widehat{\theta}_{k}(\widehat{h}^{k}-\overline{z}^{k})&=\frac{\widehat{\theta}_{k-1}}{1-\widehat{\theta}_{k-1}} \widehat{\beta}_{k}\left(\widehat{z}^{k-1}-2\widehat{\alpha}_{k-1} \widehat{d}^{k-1}-(\widehat{z}^{k-1}-\widehat{\alpha}_{k-1} \widehat{d}^{k-1})\right) \\
			 &=\widehat{\beta}_{k} (-\widehat{\alpha}_{k-1} \widehat{d}^{k-1})=\overline{\beta}_{k} (\overline{z}^{k}-\overline{z}^{k-1}).
			 \end{aligned}
		\]
		This establishes the equality $\widehat{\theta}_{k}(\widehat{h}^{k}-\overline{z}^{k})=\overline{\beta}_{k}(\overline{z}^{k}-\overline{z}^{k-1})$ for all subcases of Case 2.
		
		{\bf Case 3.} If $\widehat{\beta}_{k} \neq 0$ and $\widehat{\theta}_{k-1} = 1$, then $\widehat{\theta}_{k-1} = 1$ implies $\widehat{\beta}_{k-1} \neq 0$ and $\widehat{\theta}_{k-2} \neq 1$. From the recurrence, we have
		 $$
		 \widehat{z}^{k}=(1-\widehat{\theta}_{k-1}) \widehat{z}^{k-1}+\widehat{\theta}_{k-1} \widehat{h}^{k-1}-\widehat{\alpha}_{k-1} \widehat{d}^{k-1}=\widehat{h}^{k-1}-\widehat{\alpha}_{k-1} \widehat{d}^{k-1}=\widehat{h}^{k}.
		 $$
		 Furthermore, it follows that
		 \[
		 \begin{aligned}
			 \widehat{z}^{k+1}&=-\widehat{\beta}_{k}\widehat{z}^{k-1}+(1+\widehat{\beta}_{k})\widehat{h}^{k}-\widehat{\alpha}_{k} \widehat{d}^{k} 
			 =-\widehat{\beta}_{k}\widehat{z}^{k-1}+(1+\widehat{\beta}_{k})\widehat{z}^{k}-\widehat{\alpha}_{k} \widehat{d}^{k} \\
			 &=\widehat{z}^{k}-\widehat{\alpha}_{k} \widehat{d}^{k}+\widehat{\beta}_{k}(\widehat{z}^{k}-\widehat{z}^{k-1}) 
			 =\overline{z}^{k}-\overline{\alpha}_{k} \overline{d}^{k}+\overline{\beta}_{k}(\overline{z}^{k}-\overline{z}^{k-1}) = \overline{z}^{k+1}.
			 \end{aligned}
		 \]
		 Therefore, in all cases, we conclude that $\widehat{z}^{k+1}=\overline{z}^{k+1}$, which completes the induction and the proof of the lemma.
		\end{proof}

		Let the parameters $\{\widetilde{\alpha}_{k}, \widetilde{\beta}_{k},\widetilde{S}_{k}\}_{k \geq 1}$ be given. Consider the following iteration scheme
		\begin{equation}\label{iteration-III} 
			\begin{cases}
				\widetilde{d}^{k} =A^\top \widetilde{S}_{k}\widetilde{S}_{k}^\top (A\nabla f^{*}(\widetilde{h}^{k}+\widetilde{\delta}_{k}\widetilde{q}^{k})-b),\\ 
				\text{If} \; \widetilde{\beta}_{k}=0, \; \text{then}\; \widetilde{\theta}_{k}=\frac{1}{2}, 
				\widetilde{\delta}_{k}^{*}  =1, \widetilde{z}^{k} =\widetilde{h}^{k}+\widetilde{\delta}_{k} \widetilde{q}^{k}, \; \text{and}\\
				\qquad \qquad \qquad \quad (\widetilde{h}^{k+1},\widetilde{q}^{k+1},\widetilde{\delta}_{k+1})=\left(\widetilde{z}^{k}-2\widetilde{\alpha}_{k}\widetilde{d}^{k},\;\,2\widetilde{\alpha}_{k}\widetilde{d}^{k},\;\, \frac{1}{2}\right).
				\\
				\text{If} \; \widetilde{\beta}_{k}\neq0 \; \text{and} \; \widetilde{\theta}_{k-1} \neq 1,  \; \text{then} \;
				\widetilde{\theta}_{k} =
				\begin{cases}
					\frac{\widetilde{\theta}_{k-1}}{1-\widetilde{\theta}_{k-1}} \widetilde{\beta}_{k}  & \text{if} \; \widetilde{\theta}_{k-2} \neq 1 \; \text{or} \; \widetilde{\beta}_{k-1}=0; \\
					-\widetilde{\beta}_{k} & \text{otherwise},
				\end{cases} \\
				\qquad \qquad \widetilde{\delta}_{k}^{*}  =\widetilde{\delta}_{k},  \; \text{and} \;
				(\widetilde{h}^{k+1}, \widetilde{q}^{k+1},\widetilde{\delta}_{k+1})=
				\left(\widetilde{h}^{k}-\frac{\widetilde{\alpha}_{k}}{\widetilde{\theta}_{k}} \widetilde{d}^{k}, \widetilde{q}^{k}+\frac{\widetilde{\alpha}_{k}}{\widetilde{\delta}_{k}^{*} \widetilde{\theta}_{k}} \widetilde{d}^{k},  (1-\widetilde{\theta}_{k})\widetilde{\delta}_{k}^{*}\right).\\
				\text{If} \; \widetilde{\beta}_{k}\neq0 \; \text{and} \; \widetilde{\theta}_{k-1} = 1,  \; \text{then} \;
				\widetilde{\theta}_{k} = \frac{1}{2}, \\
				\qquad \qquad \qquad \qquad \qquad \qquad \; \;\widetilde{\delta}_{k}^{*} =2 \widetilde{\delta}_{k-1}^{*} \widetilde{\beta}_{k}, \\
				\qquad \qquad \qquad \qquad \qquad \qquad \; \;(\widetilde{h}^{k+1}, \widetilde{q}^{k+1},\widetilde{\delta}_{k+1})=
				\left(\widetilde{h}^{k},  \widetilde{q}^{k}+\frac{\widetilde{\alpha}_{k}}{\widetilde{\delta}_{k}^{*} \widetilde{\theta}_{k}} \widetilde{d}^{k},  -\widetilde{\theta}_{k}\widetilde{\delta}_{k}^{*}\right).
			\end{cases}
		\end{equation}
		The initial conditions are $\widetilde{z}^{0} \in \text{Range}(A^{\top})$, $\widetilde{\xi}^{0} \in \mathbb{R}^{m}$,  $(\widetilde{h}^{0}, \widetilde{q}^{0},\widetilde{\delta}_{0})=(\widetilde{z}^{0},0,1)$, $(\widetilde{h}^{1},\widetilde{q}^{1},\widetilde{\delta}_{1})=\left(\widetilde{z}^{0}+2A^\top \widetilde{\xi}^{0},-2A^\top \widetilde{\xi}^{0}, \frac{1}{2}\right)$, $\widetilde{\delta}_{0}^{*}=1$, $\widetilde{\theta}_{-1}=\widetilde{\theta}_{0}=\frac{1}{2}$, and $\widetilde{\beta}_{0}=0$. 
		
		The recurrence for \(\widetilde{\theta}_{k}\) ensures \(\widetilde{\theta}_{k} \neq 0\) for all \(k \geq 1\).
	We prove by induction that \(\widetilde{\delta}_{k}^{*} \neq 0\) for all \(k \geq 1\). For \(k=1\), since \(\widetilde{\theta}_{0} = \frac{1}{2} \neq 1\), we have \(\widetilde{\delta}_{1}^{*} = 1\) if \(\widetilde{\beta}_{1} = 0\), or \(\widetilde{\delta}_{1}^{*} = \frac{1}{2}\) if \(\widetilde{\beta}_{1} \neq 0\). In both cases, \(\widetilde{\delta}_{1}^{*} \neq 0\).
	Assume \(\widetilde{\delta}_{k}^{*} \neq 0\) for some \(k \geq 1\). Then, (1) if \(\widetilde{\beta}_{k+1} = 0\), then \(\widetilde{\delta}_{k+1}^{*} = 1 \neq 0\);
		(2) if \(\widetilde{\beta}_{k+1} \neq 0\) and \(\widetilde{\theta}_{k} \neq 1\), then 
		\[ 
			\widetilde{\delta}_{k+1}^{*}=\widetilde{\delta}_{k+1}=
			\begin{cases}
				\frac{1}{2}, \qquad\qquad\; \text{if} \; \widetilde{\beta}_{k}=0, \\[1.7mm]
				(1-\widetilde{\theta}_{k})\widetilde{\delta}_{k}^{*} \quad\, \text{if} \; \widetilde{\beta}_{k} \neq 0 \; \text{and} \; \widetilde{\theta}_{k-1} \neq 1, \\[1.7mm]
				-\widetilde{\theta}_{k} \widetilde{\delta}_{k}^{*} \quad\quad\;\;\;\, \text{if} \; \widetilde{\beta}_{k} \neq 0 \; \text{and} \; \widetilde{\theta}_{k-1} = 1.
			\end{cases}
		\]
		Since $\widetilde{\theta}_{k} \notin \{0,1\}$ and $\widetilde{\delta}_{k}^{*} \neq 0$, we have $\widetilde{\delta}_{k+1}^{*} \neq 0$;
	(3) if \(\widetilde{\beta}_{k+1} \neq 0\) and \(\widetilde{\theta}_{k} = 1\), then \(\widetilde{\delta}_{k+1}^{*} = 2 \widetilde{\delta}_{k}^{*} \widetilde{\beta}_{k+1} \neq 0\).
		Hence, \(\widetilde{\delta}_{k+1}^{*} \neq 0\), and the sequences \(\{\widetilde{h}^{k}\}_{k \geq 0}\) and \(\{\widetilde{q}^{k}\}_{k \geq 0}\) in \eqref{iteration-III}  are well-defined.
	
Based on Lemma \ref{Theo4}, we establish the following result, which shows that the iteration schemes \eqref{iteration-I} and \eqref{iteration-III} are equivalent.
		\begin{lemma} \label{Theo3}
		Suppose that $\overline{z}^{0} = \widetilde{z}^{0}$, $\overline{\xi}^{0} = \widetilde{\xi}^{0}$, and $(\overline{\alpha}_{k}, \overline{\beta}_{k}, \overline{S}_{k}) = (\widetilde{\alpha}_{k}, \widetilde{\beta}_{k}, \widetilde{S}_{k})$ for all $k \geq 1$. Let the sequences $\{\overline{z}^{k}\}_{k \geq 0}$ and $\{\widetilde{h}^{k}, \widetilde{q}^{k}, \widetilde{\delta}_{k}, \widetilde{\delta}_{k}^{*}, \widetilde{\theta}_{k}\}_{k \geq 0}$ be generated by the iteration schemes \eqref{iteration-I} and \eqref{iteration-III}, respectively. Then for any $k \geq 0$, we have
			 $\overline{z}^{k}=\widetilde{h}^{k}+\widetilde{\delta}_{k} \widetilde{q}^{k}$ and
			 $\overline{z}^{k+1}-\overline{z}^{k}=-\widetilde{\theta}_{k} \widetilde{\delta}_{k}^{*} \widetilde{q}^{k+1}$.
		\end{lemma}
		\begin{proof} 
			We first prove the identity $\overline{z}^{k} = \widetilde{h}^{k} + \widetilde{\delta}_{k} \widetilde{q}^{k}$ by induction.
			We begin with the base cases. By the initial conditions, we have  $(\widetilde{h}^{0}, \widetilde{q}^{0},\widetilde{\delta}_{0})=(\widetilde{z}^{0},0,1)$ and $(\widetilde{h}^{1},\widetilde{q}^{1},\widetilde{\delta}_{1})=\left(\widetilde{z}^{0}+2A^\top \widetilde{\xi}^{0},-2A^\top \widetilde{\xi}^{0}, \frac{1}{2}\right)$, and thus, $\widetilde{h}^{0}+\widetilde{\delta}_{0} \widetilde{q}^{0}=\widetilde{z}^{0}=\overline{z}^{0}$ and
			$$
			\widetilde{h}^{1}+\widetilde{\delta}_{1} \widetilde{q}^{1}=(\widetilde{z}^{0}+2A^\top \widetilde{\xi}^{0})+\frac{1}{2} \cdot (-2A^\top \widetilde{\xi}^{0})=\widetilde{z}^{0}+A^\top \widetilde{\xi}^{0}=\overline{z}^{0}+A^\top \overline{\xi}^{0}=\overline{z}^{1}.
			$$
			We now consider the update for \((\widetilde{h}^{2}, \widetilde{q}^{2}, \widetilde{\delta}_{2})\).  Since $\widetilde{\theta}_{0}=\frac{1}{2} \neq 1$, the recurrence proceeds based on the value of $\widetilde{\beta}_{1}$.  If $\widetilde{\beta}_{1}=0$, then $(\widetilde{h}^{2},\widetilde{q}^{2},\widetilde{\delta}_{2})=\left(\widetilde{z}^{1}-2\widetilde{\alpha}_{1}\widetilde{d}^{1},\;\,2\widetilde{\alpha}_{1}\widetilde{d}^{1},\;\, \frac{1}{2}\right)$. Thus, we have
			\[
				\begin{aligned}
					\widetilde{h}^{2}+\widetilde{\delta}_{2} \widetilde{q}^{2}&=(\widetilde{z}^{1}-2\widetilde{\alpha}_{1}\widetilde{d}^{1})+\frac{1}{2} \cdot 2\widetilde{\alpha}_{1}\widetilde{d}^{1} 
					=\widetilde{z}^{1}-\widetilde{\alpha}_{1} \widetilde{d}^{1} 
					=(\widetilde{h}^{1}+\widetilde{\delta}_{1} \widetilde{q}^{1})-\widetilde{\alpha}_{1} \widetilde{d}^{1}+\widetilde{\beta}_{1}(\overline{z}^{1}-\overline{z}^{0}) \\
					&=\overline{z}^{1}-\overline{\alpha}_{1} \overline{d}^{1}+\overline{\beta}_{1}(\overline{z}^{1}-\overline{z}^{0}) 
					=\overline{z}^{2}.
				\end{aligned}
			\]
			 If $\widetilde{\beta}_{1} \neq 0$, then
			\[ 
					(\widetilde{h}^{2}, \widetilde{q}^{2},\widetilde{\delta}_{2})=
					\left(\widetilde{h}^{1}-\frac{\widetilde{\alpha}_{1}}{\widetilde{\theta}_{1}} \widetilde{d}^{1},  \widetilde{q}^{1}+\frac{\widetilde{\alpha}_{1}}{\widetilde{\delta}_{1}^{*} \widetilde{\theta}_{1}} \widetilde{d}^{1},  (1-\widetilde{\theta}_{1})\widetilde{\delta}_{1}^{*}\right) 
					=\left(\widetilde{h}^{1}-\frac{\widetilde{\alpha}_{1}}{\widetilde{\beta}_{1}} \widetilde{d}^{1},  \widetilde{q}^{1}+2\frac{\widetilde{\alpha}_{1}}{\widetilde{\beta}_{1}} \widetilde{d}^{1}, \frac{1-\widetilde{\beta}_{1}}{2}\right).
			\]
			Thus, we have
			\[
				\begin{aligned}
					\widetilde{h}^{2}+\widetilde{\delta}_{2} \widetilde{q}^{2}&=\widetilde{h}^{1}-\frac{\widetilde{\alpha}_{1}}{\widetilde{\beta}_{1}} \widetilde{d}^{1}+\frac{1-\widetilde{\beta}_{1}}{2} \left(\widetilde{q}^{1}+2\frac{\widetilde{\alpha}_{1}}{\widetilde{\beta}_{1}} \widetilde{d}^{1}\right) 
					=\widetilde{h}^{1}-\widetilde{\alpha}_{1} \widetilde{d}^{1}+\frac{1-\widetilde{\beta}_{1}}{2} \widetilde{q}^{1} \\
					&=(\widetilde{z}^{0}+2A^\top \widetilde{\xi}^{0})-\widetilde{\alpha}_{1} \widetilde{d}^{1}-(1-\widetilde{\beta}_{1})A^\top \widetilde{\xi}^{0} 
					=(\widetilde{z}^{0}+A^\top \widetilde{\xi}^{0})-\widetilde{\alpha}_{1} \widetilde{d}^{1}+\widetilde{\beta}_{1}A^\top \widetilde{\xi}^{0} \\
					&=(\overline{z}^{0}+A^\top \overline{\xi}^{0})-\overline{\alpha}_{1} \overline{d}^{1}+\overline{\beta}_{1}A^\top \overline{\xi}^{0} 
					=\overline{z}^{1}-\overline{\alpha}_{1} \overline{d}^{1}+\overline{\beta}_{1} (\overline{z}^{1}-\overline{z}^{0}) 
					=\overline{z}^{2}.
				\end{aligned}
			\]
			In either case, we have \(\overline{z}^2 = \widetilde{h}^2 + \widetilde{\delta}_2 \widetilde{q}^2\), thereby completing the base cases. Now, assume by induction that \(\overline{z}^j = \widetilde{h}^j + \widetilde{\delta}_j \widetilde{q}^j\) holds for all \(j \leq k\) and some \(k \geq 2\). We show that it also holds at step \(k+1\), by considering the three cases based on the values of \(\widetilde{\beta}_k\) and \(\widetilde{\theta}_{k-1}\):
				
			{\bf Case 1.} If $\widetilde{\beta}_{k}=0$, then
			\[
				\begin{aligned}
					\widetilde{h}^{k+1}+\widetilde{\delta}_{k+1} \widetilde{q}^{k+1}&=(\widetilde{z}^{k}-2\widetilde{\alpha}_{k} \widetilde{d}^{k})+\frac{1}{2} \cdot 2\widetilde{\alpha}_{k} \widetilde{d}^{k} 
					=\widetilde{z}^{k}-\widetilde{\alpha}_{k} \widetilde{d}^{k} \\
					&=(\widetilde{h}^{k}+\widetilde{\delta}_{k} \widetilde{q}^{k})-\widetilde{\alpha}_{k} \widetilde{d}^{k}+\widetilde{\beta}_{k}(\overline{z}^{k}-\overline{z}^{k-1}) \\
					&=\overline{z}^{k}-\overline{\alpha}_{k} \overline{d}^{k}+\overline{\beta}_{k}(\overline{z}^{k}-\overline{z}^{k-1}) 
					=\overline{z}^{k+1}.
				\end{aligned}
			\]
		
		 {\bf Case 2.} If $\widetilde{\beta}_{k} \neq 0$ and $\widetilde{\theta}_{k-1} \neq 1$, then
			\[
				\begin{aligned}
					\widetilde{h}^{k+1}+\widetilde{\delta}_{k+1}\widetilde{q}^{k+1}&=\widetilde{h}^{k}-\frac{\widetilde{\alpha}_{k}}{\widetilde{\theta}_k} \widetilde{d}^{k}+(1-\widetilde{\theta}_{k})\widetilde{\delta}_{k}^{*} \left(\widetilde{q}^{k}+\frac{\widetilde{\alpha}_{k}}{\widetilde{\delta}_{k}^{*} \widetilde{\theta}_{k}} \widetilde{d}^{k}\right) \\
					&=(\widetilde{h}^{k}+\widetilde{\delta}_{k}^{*} \widetilde{q}^{k})-\widetilde{\alpha}_{k} \widetilde{d}^{k}-\widetilde{\theta}_{k} \widetilde{\delta}_{k}^{*} \widetilde{q}^{k} 
					=(\widetilde{h}^{k}+\widetilde{\delta}_{k} \widetilde{q}^{k})-\widetilde{\alpha}_{k} \widetilde{d}^{k}-\widetilde{\theta}_{k} \widetilde{\delta}_{k} \widetilde{q}^{k} \\
					&=\overline{z}^{k}-\widetilde{\alpha}_{k} \widetilde{d}^{k}-\widetilde{\theta}_{k} (\overline{z}^{k}-\widetilde{h}^{k}) 
					=(1-\widetilde{\theta}_{k})\overline{z}^k+\widetilde{\theta}_{k} \widetilde{h}^{k}-\overline{\alpha}_{k} \overline{d}^{k}.
				\end{aligned}
			\]
			Furthermore, by Lemma \ref{Theo4}, we have $\overline{z}^{k+1}=(1-\widetilde{\theta}_{k})\overline{z}^k+\widetilde{\theta}_{k} \widetilde{h}^{k}-\overline{\alpha}_{k} \overline{d}^{k}$. Thus, it holds that $\overline{z}^{k+1}=\widetilde{h}^{k+1}+\widetilde{\delta}_{k+1} \widetilde{q}^{k+1}$.
			
			{\bf Case 3.} If $\widetilde{\beta}_{k} \neq 0$ and $\widetilde{\theta}_{k-1} = 1$, then $\widetilde{\theta}_{k-1} = 1$ implies that $\widetilde{\beta}_{k-1} \neq 0$ and $\widetilde{\theta}_{k-2} \neq 1$. Thus, we have $\widetilde{\delta}_{k}=(1-\widetilde{\theta}_{k-1})\widetilde{\delta}_{k-1}^{*}=0$. Furthermore, we can get
			\begin{equation} \label{h_xie191}
				\begin{aligned}
					\widetilde{h}^{k+1}+\widetilde{\delta}_{k+1}\widetilde{q}^{k+1}&=\widetilde{h}^{k}-\widetilde{\theta}_{k} \widetilde{\delta}_{k}^{*} \left(\widetilde{q}^{k}+\frac{\widetilde{\alpha}_{k}}{\widetilde{\delta}_{k}^{*} \widetilde{\theta}_{k}} \widetilde{d}^{k}\right) 
					=(\widetilde{h}^{k}+\widetilde{\delta}_{k} \widetilde{q}^{k})-\widetilde{\alpha}_{k} \widetilde{d}^{k}-\widetilde{\theta}_{k} \widetilde{\delta}_{k}^{*} \widetilde{q}^{k} \\
					&=\overline{z}^{k}-\widetilde{\alpha}_{k} \widetilde{d}^{k}-\widetilde{\delta}_{k-1}^{*}\widetilde{\beta}_{k} \widetilde{q}^{k}.
				\end{aligned}
			\end{equation}
			In addition, since $\widetilde{\theta}_{k-1} = 1$, $\widetilde{\beta}_{k-1} \neq 0$, and $\widetilde{\theta}_{k-2} \neq 1$, we have $\widetilde{h}^{k}=\widetilde{h}^{k-1}-\frac{\widetilde{\alpha}_{k-1}}{\widetilde{\theta}_{k-1}}\widetilde{d}^{k-1}=\widetilde{h}^{k-1}-\widetilde{\alpha}_{k-1} \widetilde{d}^{k-1}$. Thus,
			\[
				\begin{aligned}
					\overline{z}^{k}-\overline{z}^{k-1}&=(\widetilde{h}^{k}+\widetilde{\delta}_{k}\widetilde{q}^{k})-(\widetilde{h}^{k-1}+\widetilde{\delta}_{k-1}\widetilde{q}^{k-1}) 
					=\widetilde{h}^{k}-\widetilde{h}^{k-1}-\widetilde{\delta}_{k-1}\widetilde{q}^{k-1} \\
					&=-\widetilde{\alpha}_{k-1} \widetilde{d}^{k-1}-\widetilde{\delta}_{k-1}^{*} \widetilde{q}^{k-1} 
					=-\widetilde{\delta}_{k-1}^{*} \left(\widetilde{q}^{k-1}+\frac{\widetilde{\alpha}_{k-1}}{\widetilde{\delta}_{k-1}^{*}} \widetilde{d}^{k-1}\right) 
					=-\widetilde{\delta}_{k-1}^{*} \widetilde{q}^{k}.
				\end{aligned}
			\]
			Substitute it into \eqref{h_xie191}, we can get
			$$
			\widetilde{h}^{k+1}+\widetilde{\delta}_{k+1}\widetilde{q}^{k+1}=\overline{z}^{k}-\widetilde{\alpha}_{k} \widetilde{d}^{k}+\widetilde{\beta}_{k}(\overline{z}^{k}-\overline{z}^{k-1})=\overline{z}^{k}-\overline{\alpha}_{k} \overline{d}^{k}+\overline{\beta}_{k}(\overline{z}^{k}-\overline{z}^{k-1})=\overline{z}^{k+1}.
			$$
			Therefore, by induction, $\overline{z}^{k} = \widetilde{h}^{k} + \widetilde{\delta}_{k} \widetilde{q}^{k}$ for all $k \geq 0$.

			Next, we prove the identity  $\overline{z}^{k+1}-\overline{z}^{k}=-\widetilde{\theta}_{k} \widetilde{\delta}_{k}^{*} \widetilde{q}^{k+1}$ by induction.
			Since $\widetilde{\theta}_{0}=\frac{1}{2}$, $\widetilde{\delta}_{0}^{*}=1$, and $\widetilde{q}^{1}=-2A^{\top} \widetilde{\xi}^{0}$, we have
			$$
			\overline{z}^{1}-\overline{z}^{0}=A^{\top} \overline{\xi}^{0}=A^{\top} \widetilde{\xi}^{0}=-\overline{\theta}_{0} \widetilde{\delta}_{0}^{*} \widetilde{q}^{1}.
			$$
			Now, assume by induction that $\overline{z}^{j+1}-\overline{z}^{j}=-\widetilde{\theta}_{j} \widetilde{\delta}_{j}^{*} \widetilde{q}^{j+1}$ holds for all \(j \leq k\) and some \(k \geq 0\). We show that it also holds at step \(k+1\), by considering the three cases based on the values of \(\widetilde{\beta}_k\) and \(\widetilde{\theta}_{k-1}\):

			{\bf Case I.} If $\widetilde{\beta}_{k}=0$, then
			\[
				\begin{aligned}
					\overline{z}^{k+1}-\overline{z}^{k}&=(\widetilde{h}^{k+1}+\widetilde{\delta}_{k+1}\widetilde{q}^{k+1})-(\widetilde{h}^{k}+\widetilde{\delta}_{k}\widetilde{q}^{k}) 
					=\widetilde{z}^{k}-\widetilde{\alpha}_{k} \widetilde{d}^{k}-(\widetilde{h}^{k}+\widetilde{\delta}_{k}\widetilde{q}^{k}) \\
					&=-\widetilde{\alpha}_{k} \widetilde{d}^{k} 
					=-\widetilde{\theta}_{k} \widetilde{\delta}_{k}^{*} \widetilde{q}^{k+1}.
				\end{aligned}
			\]
			
			{\bf Case II.} If $\widetilde{\beta}_{k} \neq 0$ and $\widetilde{\theta}_{k-1} \neq 1$, then
			\[
				\begin{aligned}
					\overline{z}^{k+1}-\overline{z}^{k}&=(\widetilde{h}^{k+1}+\widetilde{\delta}_{k+1}\widetilde{q}^{k+1})-(\widetilde{h}^{k}+\widetilde{\delta}_{k}\widetilde{q}^{k}) 
					=\widetilde{h}^{k+1}-\widetilde{h}^{k}+\widetilde{\delta}_{k+1}\widetilde{q}^{k+1}-\widetilde{\delta}_{k}\widetilde{q}^{k} \\
					&=-\frac{\widetilde{\alpha}_{k}}{\widetilde{\theta}_k} \widetilde{d}^{k}+(1-\widetilde{\theta}_{k})\widetilde{\delta}_{k}^{*} \widetilde{q}^{k+1}-\widetilde{\delta}_{k}\widetilde{q}^{k} \\
					&=-\frac{\widetilde{\alpha}_{k}}{\widetilde{\theta}_k} \widetilde{d}^{k}+\widetilde{\delta}_{k}^{*} \left(\widetilde{q}^{k}+\frac{\widetilde{\alpha}_{k}}{\widetilde{\delta}_{k}^{*} \widetilde{\theta}_{k}} \widetilde{d}^{k}\right)-\widetilde{\theta}_{k} \widetilde{\delta}_{k}^{*} \widetilde{q}^{k+1}-\widetilde{\delta}_{k}^{*}\widetilde{q}^{k} 
					=-\widetilde{\theta}_{k} \widetilde{\delta}_{k}^{*} \widetilde{q}^{k+1}.
				\end{aligned}
			\]
			
			{\bf Case III.} If $\widetilde{\beta}_{k} \neq 0$ and $\widetilde{\theta}_{k-1} = 1$, then $\widetilde{\theta}_{k-1} = 1$ implies that $\widetilde{\beta}_{k-1} \neq 0$ and $\widetilde{\theta}_{k-2} \neq 1$. Thus, we have $\widetilde{\delta}_{k}=(1-\widetilde{\theta}_{k-1})\widetilde{\delta}_{k-1}^{*}=0$. Furthermore, we can get
			$$
			\overline{z}^{k+1}-\overline{z}^{k}=(\widetilde{h}^{k+1}+\widetilde{\delta}_{k+1}\widetilde{q}^{k+1})-(\widetilde{h}^{k}+\widetilde{\delta}_{k}\widetilde{q}^{k})=\widetilde{h}^{k}-\widetilde{\theta}_{k} \widetilde{\delta}_{k}^{*} \widetilde{q}^{k+1}-\widetilde{h}^{k}=-\widetilde{\theta}_{k} \widetilde{\delta}_{k}^{*} \widetilde{q}^{k+1}.
			$$
			Therefore, by induction,  $\overline{z}^{k+1}-\overline{z}^{k}=-\widetilde{\theta}_{k} \widetilde{\delta}_{k}^{*} \widetilde{q}^{k+1}$ for all $k\geq0$. 
	 This completes the proof of this lemma.
		\end{proof}
		
		Now, we are ready to prove Proposition \ref{Theo}.
		
		\begin{proof}[Proof of Proposition \ref{Theo}] 
			Since Algorithms \ref{amRDCD} and \ref{amRDCD_EI} share the same sampling matrices \(\{S_k\}_{k\geq1}\) and initial points \(z^{0}\) and \(\xi^0\), it follows from Lemma \ref{Theo3} that to establish the identity \(z^{k} = h^{k} + \delta_{k} q^{k}\) for all \(k \geq 0\), it suffices to show that the parameter sequences \(\{\alpha_{k}, \beta_{k}\}_{k \geq 1}\) in Algorithm \ref{amRDCD} are identical to those in Algorithm \ref{amRDCD_EI}.

			We begin by rewriting the parameter selection rule for \(\alpha_k\) and \(\beta_k\) in Algorithm~\ref{amRDCD} in an equivalent form.
			Recall that in Algorithm~\ref{amRDCD}, if
			\(
			\|d^k\|_2^2 \cdot \|z^k - z^{k-1}\|_2^2 - \langle d^k, z^k - z^{k-1} \rangle^2 = 0,
			\)
			then the parameters are selected as
			$
			\alpha_k = \frac{\gamma \|d_1^k\|_2^2}{\|d^k\|_2^2}$, $ \beta_k = 0
			$, where we define $\frac{0}{0}=0$ by convention. 
			Otherwise, we have
			\(
			\alpha_k = \gamma \cdot \frac{\|d_1^k\|_2^2 \cdot \|z^k - z^{k-1}\|_2^2 - \langle z^k - z^{k-1}, d^k \rangle \cdot \langle z^k - z^{k-1}, x^k - \widehat{x} \rangle}{\|d^k\|_2^2 \cdot \|z^k - z^{k-1}\|_2^2 - \langle d^k, z^k - z^{k-1} \rangle^2},
			\)
			and
			$
			\beta_k = \gamma \cdot \frac{ -\|d^k\|_2^2 \cdot \langle z^k - z^{k-1}, x^k - \widehat{x} \rangle + \langle d^k, z^k - z^{k-1} \rangle \cdot \| d_1^k \|_2^2 }{ \|d^k\|_2^2 \cdot \|z^k - z^{k-1}\|_2^2 - \langle d^k, z^k - z^{k-1} \rangle^2 }.
			$
			In the second case, if the numerator of \(\beta_k\) becomes zero, that is,
			$
			-\|d^k\|_2^2 \cdot \langle z^k - z^{k-1}, x^k - \widehat{x} \rangle + \langle d^k, z^k - z^{k-1} \rangle \cdot \| d_1^k \|_2^2 = 0,
			$
			then it also holds that \(\beta_k = 0\), and the expression for \(\alpha_k\) simplifies to
			$
			\alpha_k = \frac{\gamma \|d_1^k\|_2^2}{\|d^k\|_2^2}.
			$
			Summarizing the above cases, the parameter selection in Algorithm~\ref{amRDCD} can be equivalently rewritten as follows:
			\begin{itemize}
				\item[(1)] If 
				$
				\|d^k\|_2^2 \cdot \|z^k - z^{k-1}\|_2^2 - \langle d^k, z^k - z^{k-1} \rangle^2 = 0
				$
				or
				$
				-\|d^k\|_2^2 \cdot \langle z^k - z^{k-1}, x^k - \widehat{x} \rangle + \langle d^k, z^k - z^{k-1} \rangle \cdot \| d_1^k \|_2^2 = 0,
				$
				then
				$
				\alpha_k = \frac{\gamma \| d_1^k \|_2^2}{\| d^k \|_2^2} $, $ \beta_k = 0.
				$
				
				\item[(2)] Otherwise,
				\begin{equation}\nonumber
					\left\{
					\begin{array}{ll}
						\alpha_{k}= \gamma \cdot \frac{ \|d_1^k\|_2^2 \cdot \|z^k - z^{k-1}\|_2^2 - \langle d^k, z^k - z^{k-1} \rangle \cdot \langle z^k - z^{k-1}, x^k - \widehat{x} \rangle }{ \|d^k\|_2^2 \cdot \|z^k - z^{k-1}\|_2^2 - \langle d^k, z^k - z^{k-1} \rangle^2 }, \\[0.5cm]
						\beta_{k}=\gamma \cdot \frac{ -\|d^k\|_2^2 \cdot \langle z^k - z^{k-1}, x^k - \widehat{x} \rangle + \langle d^k, z^k - z^{k-1} \rangle \cdot \| d_1^k \|_2^2 }{ \|d^k\|_2^2 \cdot \|z^k - z^{k-1}\|_2^2 - \langle d^k, z^k - z^{k-1} \rangle^2 }(\neq0).
					\end{array}
					\right.
				\end{equation}
			\end{itemize}
		We now prove by induction that for all \(k \geq 1\), the parameters \(\alpha_k\) and \(\beta_k\) in Algorithm \ref{amRDCD} are identical to those in Algorithm \ref{amRDCD_EI}, under the assumption that both algorithms share the same sampling matrices \(\{S_k\}_{k\geq1}\) and initial points \(z^{0}\) and \(\xi^0\).
			
			For the base case \(k = 1\), note that
			$
			z^{1} - z^{0} = A^\top \xi^0 = -\theta_0 \delta_0^{*} q^1.
			$
			From this and the definitions \(l_1 = \|q^1\|_2^2\) and \(\tau_1 = -2 \langle \xi^0, b \rangle = \langle -2A^\top \xi^0, \widehat{x} \rangle = \langle q^1, \widehat{x} \rangle\), we obtain
			$
			\|z^1 - z^0\|_2^2 = \theta_0^2 (\delta_0^*)^2 l_1$, $\langle d^1, z^1 - z^0 \rangle = -\theta_0 \delta_0^* \langle d^1, q^1 \rangle,
			$
			and
			\[
			\langle z^1 - z^0, x^1 - \widehat{x} \rangle = -\theta_0 \delta_0^* \langle q^1, x^1 - \widehat{x} \rangle = -\theta_0 \delta_0^* ( \langle q^1, x^1 \rangle - \tau_1 ).
			\]
			As a result, the selection of \(\alpha_1\) and \(\beta_1\) in Algorithm~\ref{amRDCD} can be re-expressed in terms of the variables \(d_{1}^{1}\), \(d^1\), \(h^1\), \(q^1\), and the scalars \(\theta_0\), \(\delta_0^*\), \(\delta_1\), \(l_1\), \(\tau_1\) from Algorithm~\ref{amRDCD_EI}, as follows.
				If 
			$
			\|d^1\|_2^2 l_1 - \langle d^1, q^1 \rangle^2 = 0$ or $ \|d^1\|_2^2 (\langle q^1, \nabla f^*(h^1 + \delta_1 q^1) \rangle - \tau_1) - \langle d^1, q^1 \rangle \|d_1^1\|_2^2 = 0,
			$
			then
			$
			\alpha_1 = \frac{\gamma \|d_1^1\|_2^2}{\|d^1\|_2^2},  \beta_1 = 0.
			$
			Otherwise, $\alpha_1 = \gamma \cdot \frac{ \|d_1^1\|_2^2 l_1 - \langle d^1, q^1 \rangle \big( \langle q^1, \nabla f^*(h^1 + \delta_1 q^1) \rangle - \tau_1 \big)}{ \|d^1\|_2^2 l_1 - \langle d^1, q^1 \rangle^2 }$,
			$\beta_1 = \frac{\gamma}{\theta_0 \delta_0^{*}} \cdot \frac{ \|d^1\|_2^2 \left( \langle q^1, \nabla f^*(h^1 + \delta_1 q^1) \rangle - \tau_1 \right) - \langle d^1, q^1 \rangle \|d_1^1\|_2^2 }{ \|d^1\|_2^2 l_1 - \langle d^1, q^1 \rangle^2 }.$
			This formulation exactly matches the parameter selection strategy in Algorithm \ref{amRDCD_EI} for \(k = 1\).
			
			Now, assume by induction that for all \(j \leq k\) and some \(k \geq 2\), the parameters \(\alpha_{j-1}\) and \(\beta_{j-1}\) in Algorithm \ref{amRDCD} are identical to those in Algorithm \ref{amRDCD_EI}. We now prove that this equivalence holds for \(\alpha_k\) and \(\beta_k\). By the inductive hypothesis and Lemma \ref{Theo3}, we have
			$
			z^{k}-z^{k-1}=-\theta_{k-1} \delta_{k-1}^{*} q^{k}
			$.
			Thus, we have
			$
					\|z^{k}-z^{k-1}\|^2_2=\theta_{k-1}^{2} (\delta_{k-1}^{*})^{2} \|q^{k}\|_{2}^{2}$, 
			$		\langle d^{k},z^{k}-z^{k-1}\rangle=-\theta_{k-1} \delta_{k-1}^{*} \langle d^{k},q^{k}\rangle,
		$
			and
			$$
			\langle z^{k}-z^{k-1},x^{k}-\widehat{x}\rangle=-\theta_{k-1} \delta_{k-1}^{*}\langle q^{k},x^{k}-\widehat{x}\rangle=-\theta_{k-1} \delta_{k-1}^{*}(\langle q^{k},x^{k}\rangle-\langle q^{k},\widehat{x}\rangle).
			$$
		We claim that the auxiliary variables in Algorithm \ref{amRDCD_EI} satisfy \(l_k = \|q^{k}\|_2^2\) and \(\tau_k = \langle q^{k}, \widehat{x} \rangle\). Granting this claim for now, the parameter selection rule for \(\alpha_k\) and \(\beta_k\) in Algorithm \ref{amRDCD} becomes equivalent to
			\begin{itemize}
				\item[(1)] If $\|d^k\|^2_2 l_{k}-\langle d^k,q^{k}\rangle^2=0$ or $ \|d^{k}\|_{2}^{2}(\langle q^{k}, \nabla f^{*}(h^{k}+\delta_{k} q^{k})\rangle-\tau_{k})-\langle d^{k},q^{k} \rangle \|d_{1}^{k}\|_{2}^{2}=0$, then
				$
				\alpha_{k}=	\frac{\gamma\left\|d_{1}^{k}\right\|^2_2}{\left\| d^{k}\right\|_2^2}$, $\beta_{k}=0.
				$
			
				\item[(2)] Otherwise,
				\begin{equation}\nonumber
					\left\{
					\begin{array}{ll}
						\alpha_{k}=\gamma \frac{\|d_{1}^{k}\|^2_2l_{k}-\langle d^{k}, q^{k} \rangle (\langle q^{k}, \nabla f^{*}(h^{k}+\delta_{k} q^{k}) \rangle-\tau_{k})}{\|d^k\|^2_2l_{k}-\langle d^k,q^{k}\rangle^2}, \\[0.5cm]
						\beta_{k}=\frac{\gamma}{\theta_{k-1} \delta_{k-1}^{*}} \frac{\|d^{k}\|_{2}^{2}(\langle q^{k}, \nabla f^{*}(h^{k}+\delta_{k} q^{k} \rangle-\tau_{k}) - \langle d^{k},q^{k} \rangle \|d_{1}^{k}\|_{2}^{2}}{ \|d^k\|^2_2l_{k}-\langle d^k,q^{k}\rangle^2}.
					\end{array}
					\right.
				\end{equation}
			\end{itemize}
		This formulation is identical to the parameter selection strategy in Algorithm \ref{amRDCD_EI} for iteration \(k\).
			
			It remains to prove the claim that \(l_k = \|q^{k}\|_2^2\) and \(\tau_k = \langle q^{k}, \widehat{x} \rangle\). We proceed by induction on \(k\).
		For the base case \(k=1\), we have already verified that \(l_1 = \|q^{1}\|_2^2\) and \(\tau_1 = \langle q^{1}, \widehat{x} \rangle\).
		Now assume that \(l_{k-1} = \|q^{k-1}\|_2^2\) and \(\tau_{k-1} = \langle q^{k-1}, \widehat{x} \rangle\) for some \(k \geq 2\). We prove the identities for \(k\) by case analysis on the definition of \(q^{k}\).
			If $\beta_{k-1}=0$, then $q^{k}=2\alpha_{k-1} d^{k-1}$. Thus, we have
			$$
			\|q^{k}\|_{2}^{2}=\|2\alpha_{k-1} d^{k-1}\|_{2}^{2}=4\alpha_{k-1}^{2}\|d^{k-1}\|_{2}^{2}=l_{k},
			$$
			and
			$$
			\langle q^{k},\widehat{x}\rangle=\langle 2\alpha_{k-1} d^{k-1},\widehat{x}\rangle=2\alpha_{k-1}\langle A^{\top}S_{k-1}d_{1}^{k-1}, \widehat{x} \rangle=2\alpha_{k-1}\langle d_{1}^{k-1}, S_{k-1}^{\top}b\rangle=\tau_{k}.
			$$
			If $\beta_{k-1} \neq 0$, then $q^{k}=q^{k-1}+\frac{\alpha_{k-1}}{\delta_{k-1}^{*} \theta_{k-1}} d^{k-1}$. Thus, we have
			$$
			\|q^{k}\|_{2}^{2}=l_{k-1}+2\frac{\alpha_{k-1}}{\delta_{k-1}^{*}\theta_{k-1}} \langle d^{k-1},q^{k-1} \rangle+\frac{\alpha_{k-1}^{2}}{(\delta_{k-1}^{*})^{2} \theta_{k-1}^{2}} \|d^{k-1}\|_{2}^{2}=l_{k},
			$$
			and
			$$
			\langle q^{k},\widehat{x}\rangle=\left \langle q^{k-1}++\frac{\alpha_{k-1}}{\delta_{k-1}^{*} \theta_{k-1}} d^{k-1},\widehat{x} \right \rangle=\tau_{k-1}+\frac{\alpha_{k-1}}{\delta_{k-1}^{*} \theta_{k-1}} \langle d_{1}^{k-1}, S_{k-1}^{\top}b \rangle=\tau_{k}.
			$$
		 Therefore, the claim holds for all \(k \geq 1\). This completes the induction and the proof of the theorem.
		\end{proof}







\end{CJK}
\end{document}